\numberwithin{equation}{section}
\theoremstyle{plain}
\newtheorem{thm}{Theorem}[section]
\newtheorem{lem}[thm]{Lemma}
\newtheorem{cor}[thm]{Corollary}
\newtheorem{prop}[thm]{Proposition}
\theoremstyle{definition}
\newtheorem{defn}[thm]{Definition}
\newtheorem{ex}[thm]{Example}
\theoremstyle{remark}
\newtheorem{rem}[thm]{Remark}
\newcommand{\C}{\mathbb{C}}
\newcommand{\N}{\mathbb{N}}
\newcommand{\R}{\mathbb{R}}
\newcommand{\Z}{\mathbb{Z}}
\renewcommand{\AA}{\mathcal{A}}
\newcommand{\FF}{\mathcal{F}}
\newcommand{\HH}{\mathcal{H}}
\newcommand{\LL}{\mathcal{L}}
\newcommand{\RR}{\mathcal{R}}
\renewcommand{\SS}{\mathcal{S}}
\newcommand{\fg}{\mathfrak{g}}
\newcommand{\fG}{\mathfrak{G}}
\newcommand{\fU}{\mathfrak{U}}
\newcommand{\fX}{\mathfrak{X}}
\newcommand{\bfh}{\mathbf{h}}
\newcommand{\bfr}{\mathbf{r}}
\newcommand{\bfs}{\mathbf{s}}
\newcommand{\bfH}{\mathbf{H}}
\newcommand{\bfV}{\mathbf{V}}
\newcommand{\sA}{\mathsf{A}}
\newcommand{\sT}{\mathsf{T}}
\newcommand{\supp}{\operatorname{supp}}
\newcommand{\codim}{\operatorname{codim}}
\newcommand{\im}{\operatorname{im}}
\newcommand{\Cl}{\operatorname{Cl}}
\newcommand{\id}{\operatorname{id}}
\newcommand{\Tr}{\operatorname{Tr}}
\newcommand{\Aut}{\operatorname{Aut}}
\newcommand{\Diffeo}{\operatorname{Diffeo}}
\newcommand{\Diff}{\operatorname{Diff}}
\newcommand{\GL}{\operatorname{GL}}
\newcommand{\Hol}{\operatorname{Hol}}
\newcommand{\Hom}{\operatorname{Hom}}
\newcommand{\rank}{\operatorname{rank}}
\newcommand{\End}{\operatorname{End}}
\newcommand{\dom}{\operatorname{dom}}
\newcommand{\ev}{\operatorname{ev}}
\newcommand{\Pen}{\operatorname{Pen}}
\newcommand{\inj}{\operatorname{inj}}
\newcommand{\pr}{\operatorname{pr}}
\newcommand{\length}{\operatorname{length}}
\newcommand{\rnabla}{\mathring{\nabla}}
\newcommand{\rP}{\mathring{P}}
\newcommand{\rexp}{\mathring{\exp}}
\newcommand{\rGamma}{\mathring{\Gamma}}
\newcommand{\cnabla}{\check{\nabla}}
\newcommand{\cexp}{\check{\exp}}
\newcommand{\cO}{\check{O}}
\newcommand{\cV}{\check{V}}
\newcommand{\Trs}{\Tr^{\text{\rm s}}}
\newcommand{\Fsigma}{{}^\FF\!\!\sigma}
\newcommand{\Diffub}{\Diff_{\text{\rm ub}}}
\newcommand{\Cinftyub}{C^\infty_{\text{\rm ub}}}
\newcommand{\Cinftyc}{C^\infty_{\text{\rm c}}}
\newcommand{\sm}{\smallsetminus}
\newcommand{\Ldis}{L_{\text{\rm dis}}}
\newcommand{\LdisK}{L_{\text{\rm dis},K}}
\newcommand{\LdisJ}{L_{\text{\rm dis},J}}
\newcommand{\olfX}{\overline{\fX}}
\newcommand{\fXcom}{\fX_{\text{\rm com}}}
\newcommand{\olfXcom}{\overline{\fX}_{\text{\rm com}}}
\definecolor{darkgreen}{cmyk}{1,0,1,.2}
\definecolor{m}{rgb}{1,0.1,1}
\newdimen\theight
\def\TeXref#1{%
             \leavevmode\vadjust{\setbox0=\hbox{{\tt
                     \quad\quad  {\small \textrm #1}}}%
             \theight=\ht0
             \advance\theight by \lineskip
             \kern -\theight \vbox to
             \theight{\rightline{\rlap{\box0}}%
             \vss}%
             }}%
\title{Analysis on Riemannian foliations of bounded geometry}
\author[J.A. \'Alvarez L\'opez]{Jes\'us A. \'Alvarez L\'opez}
\address{Department/Institute of Mathematics\\
         University of Santiago de Compostela\\
         15782 Santiago de Compostela\\ Spain}
\email{jesus.alvarez@usc.es}
\author[Y.A. Kordyukov]{Yuri A. Kordyukov}
\address{Institute of Mathematics\\ 
Ufa Federal Research Centre\\ 
Russian Academy of Science\\ 
112 Chernyshevsky str.\\ 
450008 Ufa\\ Russia}
\email{yurikor@matem.anrb.ru}
\thanks{The authors are partially supported by FEDER/Ministerio de Ciencia, Innovaci\'on y Universidades/AEI/MTM2017-89686-P and MTM2014-56950-P, and Xunta de Galicia/2015 GPC GI-1574. The second author is partially supported by the RFBR grant 16-01-00312.}
\author[E. Leichtnam]{Eric Leichtnam}
\address{Institut de Math\'ematiques de Jussieu-PRG\\ CNRS\\ Batiment Sophie Germain (bureau 740)\\ Case~7012\\ 75205 Paris Cedex 13, France}
\email{ericleichtnam@math.jussieu.fr}
\dedicatory{Tribute to Christopher Deninger for his 60th birthday}
\date{\today}
\subjclass{58A14, 58J10, 57R30}
\keywords{Riemannian foliation, bounded geometry, leafwise Hodge decomposition, foliated flow, smoothing operaror, leafwise Novikov complex}
\begin{document}

\maketitle

\begin{abstract}
A leafwise Hodge decomposition was proved by Sanguiao for Riemannian foliations of bounded geometry. Its proof is explained again in terms of our study of bounded geometry for Riemannian foliations. It is used to associate smoothing operators to foliated flows, and describe their Schwartz kernels. All of this is extended to a leafwise version of the Novikov differential complex.
\end{abstract}

\tableofcontents

\section{Introduction}

Christopher Deninger has proposed a program to study arithmetic zeta functions by finding an interpretation of the so called explicit formulae as a (dynamical) Lefschetz trace formula for foliated flows on suitable foliated spaces \cite{Deninger1998,Deninger2001,Deninger2002,Deninger-Arith_geom_anal_fol_sps,Deninger2008}. Hypothetically, the action of the flow on some reduced leafwise cohomology should have some Lefschetz distribution. Then the trace formula would describe it using local data from the fixed points and closed orbits. The precise expression of these contributions was previously suggested by Guillemin \cite{Guillemin1977}. Further developments of these ideas were made in \cite{DeningerSinghof2002,Leichtnam2008,Leichtnam2014,Kim-fiber_bdls}.

Deninger's program needs the existence of foliated spaces of arithmetic nature, where the application of the trace formula has arithmetic consequences. Perhaps some generalization of foliated spaces should be considered. Anyway, to begin with, we consider a simple foliated flow $\phi=\{\phi^t\}$ on a smooth closed foliated manifold $(M,\FF)$. We assume that $\FF$ is of codimension one and the orbits of $\phi$ are transverse to the leaves without fixed points.

The first two authors proved such a trace formula when $\{\phi^t\}$ has no fixed points \cite{AlvKordy2002}. A generalization for transverse actions of Lie groups was also given \cite{AlvKordy2008a}. It uses the space $C^\infty(M;\Lambda\FF)$ of leafwise forms (smooth sections of $\Lambda\FF=\bigwedge T\FF^*$ over $M$), which is a differential complex with the leafwise derivative $d_\FF$. Its reduced cohomology is denoted by $\bar H^*(\FF)$ (the leafwise reduced cohomology). Since $\phi$ is foliated, there are induced actions $\phi^*=\{\phi^{t*}\}$ on $C^\infty(M;\Lambda\FF)$ and $\bar H^*(\FF)$. In this case, $\FF$ is Riemannian, and therefore it has a leafwise Hodge decomposition \cite{AlvKordy2001}, 
\begin{equation}\label{leafwise Hodge dec, FF Riem, M closed}
C^\infty(M;\Lambda\FF)=\ker\Delta_\FF\oplus\overline{\im d_\FF}\oplus\overline{\im\delta_\FF}\;,
\end{equation}
where $\delta_\FF$ and $\Delta_\FF$ are the leafwise coderivative and leafwise Laplacian. Moreover the leafwise heat operator $e^{-u\Delta_\FF}$ defines a continuous map
\begin{equation}\label{heat op, FF Riem, M closed}
C^\infty(M;\Lambda\FF)\times[0,\infty]\to C^\infty(M;\Lambda\FF)\;,\quad(\alpha,t)\mapsto e^{-u\Delta_\FF}\alpha\;,
\end{equation}
where $\Pi_\FF=e^{-\infty\Delta_\FF}$ is the projection to $\ker\Delta_\FF$ given by~\eqref{leafwise Hodge dec, FF Riem, M closed}. This projection induces a leafwise Hodge isomorphism
\begin{equation}\label{leafwise Hodge iso, FF Riem, M closed}
\bar H^*(\FF)\cong\ker\Delta_\FF\;.
\end{equation}
These properties are rather surprising because the differential complex $d_\FF$ is only leafwise elliptic. Of course, the condition on the foliation to be Riemannian is crucial to make up for the lack of transverse ellipticity. The decomposition~\eqref{leafwise Hodge dec, FF Riem, M closed} may not be valid for non-Riemannian foliations \cite{DeningerSinghof2001}. 

On the other hand, the action $\phi^*$ on $\bar H^*(\FF)$ satisfies the following properties \cite{AlvKordy2002,AlvKordy2008a}. For all $f\in\Cinftyc(\R)$ and $0<u\le\infty$, the operator
\begin{equation}\label{P_u,f, FF Riem, M closed}
P_{u,f}=\int_\R\phi^{t*}e^{-u\Delta_\FF}f(t)\,dt
\end{equation}
is smoothing, and therefore it is of trace class since $M$ is closed. Moreover its super-trace, $\Trs P_{u,f}$, depends continuously on $f$ and is independent of $u$, and the limit of $\Trs P_{u,f}$ as $u\downarrow0$ gives the expected contribution of the closed orbits. But, by~\eqref{leafwise Hodge iso, FF Riem, M closed} and~\eqref{P_u,f, FF Riem, M closed}, the mapping $f\mapsto\Trs P_{\infty,f}$ can be considered as a distributional version of the super-trace of $\phi^*$ on $\bar H^*(\FF)$; i.e., the Lefschetz distribution $\Ldis(\phi)$, solving the problem in this case. 

We would like to extend the trace formula to the case where $\phi$ has fixed points, which are very relevant in Deninger's program. But their existence prevents the foliation from being Riemannian, except in trivial cases. However the foliations with simple foliated flows have a precise description \cite{AlvKordyLeichtnam-sff}. For example, the $\FF$-saturation of the fixed point set of $\phi$ is a finite union $M^0$ of compact leaves, and the restriction $\FF^1$ of $\FF$ to $M^1=M\sm M^0$ is a Riemannian foliation. Moreover $\FF^1$ has bounded geometry in the sense of \cite{Sanguiao2008,AlvKordyLeichtnam2014} for certain bundle-like metric $g^1$ on $M^1$. Then, instead of $C^\infty(M;\Lambda\FF)$, we consider in \cite{AlvKordyLeichtnam-atffff} the space $I(M,M^0;\Lambda\FF)$ of distributional leafwise forms conormal to $M^0$ (the best possible singularities). This is a complex with the continuous extension of $d_\FF$, and we have a short exact sequence of complexes,
\[
0\to K(M,M^0;\Lambda\FF) \hookrightarrow I(M,M^0;\Lambda\FF)\to J(M,M^0;\Lambda\FF)\to0\;,
\]
where $K(M,M^0;\Lambda\FF)$ is the subcomplex supported in $M^0$, and $J(M,M^0;\Lambda\FF)$ is defined by restriction to $M^1$. A key result of \cite{AlvKordyLeichtnam-atffff} is that we also have a short exact sequence in (reduced) cohomology,
\[
0\to H^*K(\FF) \to \bar H^*I(\FF)\to \bar H^*J(\FF)\to0\;,
\]
with corresponding actions $\phi^*=\{\phi^{t*}\}$ induced by $\phi$. Thus we can now define $\Ldis(\phi)=\LdisK(\phi)+\LdisJ(\phi)$, using distributional versions of the super-traces of $\phi^*$ on $H^*K(\FF)$ and $\bar H^*J(\FF)$.

On the one hand, $H^*K(\FF)$ can be described using Novikov cohomologies on $M^0$. Under some conditions and taking coefficients in the normal density bundle, we can define $\LdisK(\phi)$ in this way, with the expected contribution from the fixed points. 

On the other hand, $\bar H^*J(\FF)$ can be described using the reduced cohomology $\bar H^*H^\infty(\FF^1)$ of the cochain complex defined by $d_{\FF^1}$ on the Sobolev space $H^\infty(M^1;\Lambda\FF^1)$ (defined with $g^1$); actually, leafwise Novikov versions of this complex are also needed. At this point, to define $\LdisJ(\phi)$, we need a generalization of~\eqref{leafwise Hodge dec, FF Riem, M closed}--\eqref{P_u,f, FF Riem, M closed} for Riemannian foliations of bounded geometry using this type of cochain complex. This generalization is the purpose of this paper.

Precisely, let $\FF$ be a Riemannian foliation of bounded geometry on an open manifold $M$ with a bundle-like metric. Then Sanguiao \cite{Sanguiao2008} proved versions of~\eqref{leafwise Hodge dec, FF Riem, M closed}--\eqref{leafwise Hodge iso, FF Riem, M closed} using $H^\infty(M;\Lambda\FF)$ and $\bar H^*H^\infty(\FF)$ instead of $C^\infty(M;\Lambda\FF)$ and $\bar H^*(\FF)$. We explain again the proof in terms of our study of Riemannian foliations of bounded geometry \cite{AlvKordyLeichtnam2014}. 

Moreover let $\phi$ be a simple foliated flow on $M$ transverse to the leaves. If the infinitesimal generator of $\phi$ is $C^\infty$ uniformly bounded, then we also get that~\eqref{P_u,f, FF Riem, M closed} defines a smoothing operator $P_{u,f}$, and whose Schwartz kernel is described for $0<u<\infty$. But now the operators $P_{u,f}$ are not of trace class because $M$ is not compact. So additional tools will be used in \cite{AlvKordyLeichtnam-atffff} to define and study $\LdisJ(\phi)$ (see also \cite{KordyukovPavlenko2015}).

Finally, we show how to extend these results to leafwise versions of the Novikov complex, as needed in \cite{AlvKordyLeichtnam-atffff}.

\section{Preliminaries on section spaces and differential operators}\label{s: prelim on section sps and opers}

Let us recall some analytic concepts and fix their notation.

\subsection{Distributional sections}\label{ss: dis sections}

Let $M$ be a (smooth, i.e., $C^\infty$) manifold of dimension $n$, and let $E$ be a (smooth complex) vector bundle over $M$. The space of smooth sections, $C^\infty(M;E)$, is equipped with the (weak) $C^\infty$ topology (see e.g.\ \cite{Hirsch1976}). This notation will be also used for the space of smooth sections of other types of fiber bundles. If we consider only compactly supported sections, we get the space $\Cinftyc(M;E)$, with the compactly supported $C^\infty$ topology. 

Let $\Omega^aE$ ($a\in\R$) denote the line bundle of $a$-densities of $E$, and let $\Omega E=\Omega^1E$. Let $TM$ and $T^*M$ the (complex) tangent and cotangent vector bundles, $\Lambda M=\bigwedge T^*M$, $\Omega^aM=\Omega^aTM$ and $\Omega M=\Omega^1M$.  Moreover let $\fX(M)=C^\infty(M;TM)$ and $\fX_{\text{\rm c}}(M)=\Cinftyc(M;TM)$. The restriction of vector bundles to any submanifold $L\subset M$ may be denoted with a subindex, like $E_L$, $T_LM$, $T^*_LM$ and $\Omega^a_LM$. Redundant notation will be removed; for instance, $C^\infty(L;E)$ and $C^\infty(M;\Omega^a)$ will be used instead of $C^\infty(L;E_L)$ and $C^\infty(M;\Omega^aM)$. We may also use the notation $C^\infty(E)=C^\infty(M;E)$ and $C^\infty_{\text{\rm c}}(E)=C^\infty_{\text{\rm c}}(M;E)$ if there is no danger of confusion. As usual, the trivial line bundle is omitted from this notation: the spaces of smooth (complex) functions and its compactly supported version are denoted by $C^\infty(M)$ and $C^\infty_{\text{\rm c}}(M)$. 

A similar notation is used for other section spaces. For instance, consider also the spaces of distributional (or generalized) sections of $E$, and its compactly supported version,
\[
C^{-\infty}(M;E)=\Cinftyc(M;E^*\otimes\Omega)'\;,\quad
C^{-\infty}_{\text{\rm c}}(M;E)=C^\infty(M;E^*\otimes\Omega)'\;,
\]
where we take the topological\footnote{This term is added to algebraic concepts on topological vector spaces to mean that they are compatible with the topologies. For instance, the \emph{topological dual} $V'$ consists of continuous linear maps $V\to\C$, an isomorphism is called \emph{topological} if it is also a homeomorphism, and a direct sum is called \emph{topological} if it has the product topology.} dual spaces with the weak-$*$ topology. A continuous injection $C^\infty(M;E)\subset C^{-\infty}(M;E)$ is defined by $\langle u,v\rangle=\int_Muv$ for $u\in C^\infty(M;E)$ and $v\in\Cinftyc(M;E^*\otimes\Omega)$, using the canonical pairing of $E$ and $E^*$. There is a similar continuous injection $\Cinftyc(M;E)\subset C^{-\infty}_{\text{\rm c}}(M;E)$. If $E$ is endowed with a Hermitian structure, we can also consider the Banach space $L^\infty(M;E)$ of its essentially bounded sections, whose norm is denoted by $\|\cdot\|_{L^\infty}$. If $M$ is compact, then the equivalence class of $\|\cdot\|_{L^\infty}$ is independent of the Hermitian structure. Also, for any\footnote{We use the notation $\N=\Z^+$ and $\N_0=\N\cup\{0\}$.} $m\in\N_0$, $C^m(M;E)$ denotes the space of $C^m$ sections.

When explicitly indicated, we will also consider real objects with the same notation: real vector bundles, Euclidean structures, real tangent vectors and vector fields, real densities, real functions and distributions, etc.

\subsection{Operators on section spaces}\label{ss: ops}

Let $E$ and $F$ be vector bundles over $M$, and let $A:\Cinftyc(M;E)\to C^\infty(M;F)$ be a continuous linear operator. The transpose of $A$,
\[
A^{\text{\rm t}}:C^{-\infty}_{\text{\rm c}}(M;F^*\otimes\Omega)\to C^{-\infty}(M;E^*\otimes\Omega)
\]
is given by $\langle A^{\text{\rm t}}v,u\rangle=\langle v,Au\rangle$ for $u\in\Cinftyc(M;E)$ and $v\in C^{-\infty}_{\text{\rm c}}(M;F^*\otimes\Omega)$. For instance, the transpose of the continuous dense injection $\Cinftyc(M;E^*\otimes\Omega)\subset C^\infty(M;E^*\otimes\Omega)$ is the continuous dense injection $C^{-\infty}_{\text{\rm c}}(M;E)\subset C^{-\infty}(M;E)$. If there is a restriction $A^{\text{\rm t}}:\Cinftyc(M;F^*\otimes\Omega)\to C^\infty(M;E^*\otimes\Omega)$, then $A^{\text{\rm tt}}:C^{-\infty}_{\text{\rm c}}(M;E)\to C^{-\infty}(M;F)$ is a continuous extension of $A$, also denoted by $A$. The Schwartz kernel, $K_A\in C^{-\infty}(M^2;F\boxtimes(E^*\otimes\Omega))$, is determined by the condition $\langle K_A,v\otimes u\rangle = \langle v,Au\rangle$ for $u\in\Cinftyc(M;E)$ and $v\in\Cinftyc(M;F^*\otimes\Omega)$. The mapping $A\mapsto K_A$ defines a bijection (the Schwartz kernel theorem)\footnote{For locally convex (topological vector) spaces $X$ and $Y$, the notation $L(X,Y)$ is used for the space of continuous linear operators $X\to Y$ with the topology of bounded convergence. $\End(X):=L(X,X)$ is an associative algebra with the operation of composition.} 
\[
L(\Cinftyc(M;E),C^{-\infty}(M;F))\to C^{-\infty}(M^2;F\boxtimes (E^*\otimes\Omega))\;.
\]
Note that
\[
K_{A^{\text{\rm t}}}=R^*K_A\in C^{-\infty}(M^2;(E^*\otimes\Omega)\boxtimes F)\;,
\]
where $R: M^2\to M^2$ is given by $R(x,y)=(y,x)$.

There are obvious versions of the construction of $A^{\text{\rm t}}$ and $A^{\text{\rm tt}}$ when both the domain and target of $A$ have compact support, or no support restriction.

\subsection{Differential operators}\label{ss: diff ops}

Let $\Diff(M)\subset\End(C^\infty(M))$ be the $C^\infty(M)$-submodule and subalgebra of differential operators, filtered by the order. Every $\Diff^m(M)$ ($m\in\N_0$) is $C^\infty(M)$-spanned by all compositions of up to $m$ tangent vector fields, where $\fX(M)$ is considered as the Lie algebra of derivations of $C^\infty(M)$. In particular, $\Diff^0(M)\equiv C^\infty(M)$. Any $A\in\Diff^m(M)$ has the following local description. Given a chart $(U,x)$ of $M$ with $x=(x^1,\dots,x^n)$, let $\partial_j=\frac{\partial}{\partial x^j}$ and $D_j=\frac{1}{i}\partial_j$. For any multi-index $I=(i_1,\dots,i_n)\in\N_0^n$, let $\partial_I=\partial_1^{i_1}\cdots\partial_n^{i_n}$, $D^I=D_x^I=D_1^{i_1}\cdots D_n^{i_n}$ and $|I|=i_1+\dots+i_n$. Then $A=\sum_{|I|\le m}a_ID^I$ on $\Cinftyc(U)$ for some local coefficients $a_I\in C^\infty(U)$.

On the other hand, let $P(T^*M)\subset C^\infty(T^*M)$ be the graded $C^\infty(M)$-module and subalgebra of functions on $T^*M$ whose restriction to the fibers are polynomials, with the grading defined by the degree of the polynomials. In particular, $P^{[0]}(T^*M)\equiv C^\infty(M)$ and $P^{[1]}(T^*M)\equiv\fX(M)$. The principal symbol of any $X\in\fX(M)\subset\Diff^1(M)$ is $\sigma_1(X)=iX\in P^{[1]}(T^*M)$. The map $\sigma_1$ can be extended to a homomorphism of $C^\infty(M)$-modules and algebras, $\sigma :\Diff(M)\to P(T^*M)$, obtaining for every $m$ the principal symbol surjection
\begin{equation}\label{sigma_m}
\sigma_m:\Diff^m(M)\to P^{[m]}(T^*M)\;,
\end{equation}
with kernel $\Diff^{m-1}(M)$.

For vector bundles $E$ and $F$ over $M$, the above concepts can be extended by taking the $C^\infty(M)$-tensor product with $C^\infty(M;F\otimes E^*)$, obtaining the filtered $C^\infty(M)$-submodule
\[
\Diff(M;E,F)\subset L(C^\infty(M;E),C^\infty(M;F))\;,
\]
the graded $C^\infty(M)$-submodule
\[
P^{[m]}(T^*M;F\otimes E^*)\subset C^\infty(T^*M;\pi^*(F\otimes E^*))\;,
\]
where $\pi:T^*M\to M$ is the vector bundle projection, and the principal symbol surjection
\begin{equation}\label{sigma_m on Diff^m(M;E,F)}
\sigma_m:\Diff^m(M;E,F) \to P^{[m]}(T^*M;\pi^*(F\otimes E^*))\;,
\end{equation}
with kernel $\Diff^{m-1}(M;E,F)$. Using local trivializations of $E$ and $F$, any $A\in\Diff^m(M;E,F)$ has local expressions
\[
A=\sum_{|I|\le m}a_ID^I:\Cinftyc(U,\C^l)\to\Cinftyc(U,\C^{l'})
\]
as above, where $l=\rank E$ and $l'=\rank F$, with local coefficients $a_I\in C^\infty(U;\C^{l'}\otimes\C^{l*})$. If $E=F$, then we use the notation $\Diff(M;E)$, which is also a filtered algebra with the operation of composition. Recall that $A\in\Diff^m(M;E,F)$ is called elliptic if $\sigma_m(A)(p,\xi)\in F_p\otimes E^*_p\equiv\Hom(E_p,F_p)$ is an isomorphism for all $p\in M$ and $0\ne\xi\in T^*_pM$. 

Using integration by parts, it follows that the class of differential operators is closed by transposition. So any $A\in\Diff(M;E,F)$ define continuous linear maps (Section~\ref{ss: ops}),
\[
A:C^{-\infty}(M;E)\to C^{-\infty}(M;F)\;,\quad A:C^{-\infty}_{\text{\rm c}}(M;E)\to C^{-\infty}_{\text{\rm c}}(M;F)\;.
\]

\subsection{Sobolev spaces}\label{ss: Sobolev sps}

The Hilbert space $L^2(M;\Omega^{1/2})$ is the completion $\Cinftyc(M;\Omega^{1/2})$ with the scalar product $\langle u,v\rangle=\int_Mu\bar v$. There is a continuous inclusion $L^2(M;\Omega^{1/2})\subset C^{-\infty}(M;\Omega^{1/2})$.

Suppose first that $M$ is compact. Then $L^2(M;\Omega^{1/2})$ is also a $C^\infty(M)$-module. Thus the Sobolev space of order $m\in\N_0$,
\begin{equation}\label{H^m(M;Omega^1/2)}
H^m(M;\Omega^{\frac{1}{2}})=\{\,u\in L^2(M;\Omega^{\frac{1}{2}})\mid 
\Diff^m(M;\Omega^{\frac{1}{2}})\,u\subset L^2(M;\Omega^{\frac{1}{2}})\,\}\;,
\end{equation}
is also a $C^\infty(M)$-module. In particular, $H^0(M;\Omega^{1/2})=L^2(M;\Omega^{1/2})$. By the elliptic estimate, an elliptic operator $P\in\Diff^1(M;\Omega^{1/2})$ can be used to equip $H^m(M;\Omega^{1/2})$ with the Hilbert space structure defined by
\begin{equation}\label{langle u, v rangle_m}
\langle u,v\rangle_m=\langle(1+P^*P)^mu,v\rangle\;.
\end{equation}
The equivalence class of the corresponding norm $\|\cdot\|_m$ is independent of the choice of $P$. Thus $H^m(M;\Omega^{1/2})$ is a Hilbertian space with no canonical choice of a scalar product in general. Now, the Sobolev space of order $-m$ is the Hilbertian space
\begin{equation}\label{H^-m(M;Omega^1/2)}
H^{-m}(M;\Omega^{\frac{1}{2}})=H^m(M;\Omega^{\frac{1}{2}})'\equiv\Diff^m(M;\Omega^{\frac{1}{2}})\,L^2(M;\Omega^{\frac{1}{2}})\;.
\end{equation}

For any vector bundle $E$, the $C^\infty(M)$-module $H^{\pm m}(M;E)$ can be defined as the $C^\infty(M)$-tensor product of $H^{\pm m}(M;\Omega^{1/2})$ with $C^\infty(M;E\otimes\Omega^{-1/2})$; in particular, this defines $L^2(M;E)$. A Hermitian structure $(\cdot,\cdot)$ on $E$ and a non-vanishing smooth density $\omega$ on $M$ can be used to define an obvious scalar product $\langle\cdot,\cdot\rangle$ on $L^2(M;E)$. Using moreover an elliptic operator $P\in\Diff^1(M;E)$, we get a scalar product $\langle\cdot,\cdot\rangle_m$ on $H^m(M;E)$ like in~\eqref{langle u, v rangle_m}, with norm $\|\cdot\|_m$. Indeed, this scalar product makes sense on $C^\infty(M;E)$ for any order $m\in\R$, where $(1+P^*P)^m$ is defined by the functional calculus given by the spectral theorem. Then, taking the corresponding completion of $C^\infty(M;E)$, we get the Sobolev space $H^m(M;E)$ of order $m\in\R$. In particular, $H^{-m}(M;E)\equiv H^m(M;E^*\otimes\Omega)'$.

When $M$ is not compact, any choice of $P$, $(\cdot,\cdot)$ and $\omega$ can be used to equip $\Cinftyc(M;E)$ with a scalar product $\langle\cdot,\cdot\rangle_m$ as above, and the corresponding Hilbert space completion can be denoted by $H^m(M;E)$; in particular, this defines $L^2(M;E)=H^0(M;E)$. But now the equivalence class of $\|\cdot\|_m$ (and therefore $H^m(M;E)$) depends on the choices. However their compactly supported and their local versions, $H^m_{\text{\rm c}}(M;E)$ and $H^m_{\text{\rm loc}}(M;E)$, are independent of the choices involved. In particular, we have $L^2_{\text{\rm c}}(M;E)$ and $L^2_{\text{\rm loc}}(M;E)$. The formal adjoint of any differential operator is locally defined like in the compact case.

In any case, the notation $\|\cdot\|_{m,m'}$ (or $\|\cdot\|_m$ if $m=m'$) is used for the induced norm of operators $H^m(M;E)\to H^{m'}(M;E)$ ($m,m'\in\R$). For example, when $M$ is compact, any $A\in\Diff^m(M;E)$ defines a bounded operator $A:H^{m+s}(M;E)\to H^m(M;E)$ for all $s\in\R$. Taking $s=0$, we can consider $A$ as a densely defined linear operator in $L^2(M;E)$ with domain $H^m(M;E)$. Its adjoint $A^*$ in $L^2(M;E)$ is defined by the formal adjoint $A^*\in\Diff^m(M;E)$, which is locally determined using integration by parts. Recall that $A$ is called formally self-adjoint or symmetric if it is equal to its formal adjoint.

\subsection{Differential complexes}\label{ss: diff complexes}

A {\em topological\/} ({\em cochain\/}) {\em complex\/} $(C,d)$ is a cochain complex where $C$ is a graded topological vector space and $d$ is continuous. Then the cohomology $H(C,d)=\ker d/\im d$ has an induced topology, whose maximal Hausdorff quotient, $\bar H(C,d):=H(C,d)/\overline 0\equiv\ker d/\overline{\im d}$, is called the {\em reduced cohomology\/}. The elements in $H(C,d)$ and $\bar H(C,d)$ defined by some $u\in\ker d$ will be denoted by $[u]$ and $\overline{[u]}$, respectively. The continuous cochain maps between topological complexes induce continuous homomorphisms between the corresponding (reduced) cohomologies. \emph{Topological graded differential algebras} can be similarly defined by assuming that their product is continuous.

Recall that a {\em differential complex\/} of {\em order\/} $m$ is a topological complex of the form $(C^\infty(M;E),d)$, where $E=\bigoplus_rE^r$ and $d=\bigoplus_rd_r$, for a finite sequence of differential operators of the same order $m$,
\[
\begin{CD}
C^\infty(M;E^0) @>{d_0}>> C^\infty(M;E^1) @>{d_1}>> \cdots @>{d_{N-1}}>> C^\infty(M;E^N)\;.
\end{CD}
\]
The compactly supported version $(\Cinftyc(M;E),d)$ may be also considered. Negative or decreasing degrees may be also considered without essential change. Such a differential complex is called {\em elliptic\/} if the symbol sequence,
\[
\begin{CD}
0\to E^0_p @>{\sigma_m(d_0)(p,\xi)}>> E^1_p @>{\sigma_m(d_1)(p,\xi)}>> \cdots @>{\sigma_m(d_{N-1})(p,\xi)}>> E^N_p\to0\;,
\end{CD}
\]
is exact for all $p\in M$ and $0\ne\xi\in T^*_pM$.
 
Suppose that every $E^r$ is equipped with a Hermitian structure, and $M$ with a distinguished non-vanishing smooth density. Then the formal adjoint $\delta=d^*$ also defines a differential complex, giving rise to symmetric operators $D=d+\delta$ and $\Delta=D^2=d\delta+\delta d$ (a generalized Laplacian) in the Hilbert space $L^2(M;E)$. The differential complex $d$ is elliptic if and only if the differential complex $\delta$ is elliptic, and if and only if the differential operator $D$ (or $\Delta$) is elliptic. If $d$ is elliptic and $M$ is closed, then $D$ and $\Delta$ have discrete spectra, giving rise to a topological and orthogonal decomposition (a generalized Hodge decomposition)
\begin{equation}\label{Hodge dec}
C^\infty(M;E)=\ker\Delta\oplus\im\delta\oplus\im d\;,
\end{equation}
which induces a topological isomorphism (a Hodge isomorphism)
\begin{equation}\label{Hodge iso}
H(C^\infty(M;E),d)\cong\ker\Delta\;.
\end{equation}
Thus $H(C^\infty(M;E),d)$ is of finite dimension and Hausdorff.

\subsection{Novikov differential complex}\label{ss: Novikov diff complex}

The most typical example of elliptic differential complex is given by the de~Rham derivative $d$ on $C^\infty(M;\Lambda)$, defining the de~Rham cohomology $H^*(M)=H^*(M;\C)$. Suppose that $M$ is endowed with a Riemannian metric $g$, which defines a Hermitian structure on $TM$. Then we have the de~Rham coderivative $\delta=d^*$, and the symmetric operators, $D=d+\delta$ and $\Delta=D^2=d\delta+\delta d$ (the Laplacian).

With more generality, take any closed $\theta\in C^\infty(M;\Lambda^1)$. For the sake of simplicity, assume that $\theta$ is real. Let $V\in\fX(M)$ be determined by $g(V,\cdot)=\theta$, let $\LL_V$ denote the Lie derivative with respect to $V$, and let ${\theta\!\lrcorner}=-(\theta\wedge)^*=-\iota_V$. Then we have the {\em Novikov operators\/} defined by $\theta$, depending on $z\in\C$,
\begin{align*}
d_z&=d+z\,\theta\wedge\;,\quad
\delta_z=d_z^*=\delta-\bar z\,{\theta\!\lrcorner}\;,\\
D_z&=d_z+\delta_z=D+\Re z\,R_\theta+i\,\Im z\,L_\theta\;,\\
\Delta_z&=D_z^2=d_z\delta_z+\delta_zd_z\\
&=\Delta+\Re z\,(\LL_V+\LL_V^*)-i\,\Im z\,(\LL_V-\LL_V^*)+|z|^2|\theta|^2\;,
\end{align*}
where, for $\alpha\in C^\infty(M;\Lambda^rM)$,
\[
L_\theta\alpha=\theta\cdot\alpha=\theta\wedge\alpha+\theta\lrcorner\,\alpha\;,\quad 
R_\theta\alpha=(-1)^r\alpha\cdot\theta=\theta\wedge\alpha-\theta\lrcorner\,\alpha\;.
\]
 The subindex ``$M$'' may be added to this notation if needed. Here, the dot denotes Clifford multiplication defined via the linear identity $\Lambda M\equiv\Cl(T^*M)$. We may write ${\theta\cdot}=L_\theta$. The differential operator $\LL_V+\LL_V^*$ is of order zero, and $\LL_V-\LL_V^*$ is of order one. The differential complex $(C^\infty(M;\Lambda),d_z)$ is elliptic; indeed, it has the same principal symbol as the de~Rham differential complex. Actually, $\Delta_z$ is a generalized Laplacian \cite[Definition~2.2]{BerlineGetzlerVergne2004}, and therefore $D_z$ is a generalized Dirac operator, and $d_z$ is a generalized Dirac complex. The terms {\em Novikov differential complex\/} and {\em Novikov cohomology\/} are used for $(C^\infty(M;\Lambda),d_z)$ and its cohomology, $H_z^*(M)=H_z^*(M;\C)$.

If $\theta$ is exact, say $\theta=dF$ for some $\R$-valued $F\in C^\infty(M)$, then the Novikov operators are called {\em Witten operators\/}; in particular, $d_z=e^{-zF}\,d\,e^{zF}$ and $\delta_z=e^{\bar zF}\,\delta\,e^{-\bar zF}$. Thus the multiplication operator $e^{zF}$ on $C^\infty(M;\Lambda)$ induces an isomorphism $H_z^*(M)\cong H^*(M)$ in this case.

In the general case, the above kind of argument shows that the isomorphism class of $H_z^*(M)$ depends only on $[\theta]\in H^1(M)$. We can also take a regular covering $\pi:\widetilde M\to M$ so that the lift $\tilde\theta=\pi^*\theta$ is exact, say $\tilde\theta=dF$ for some $\R$-valued $F\in C^\infty(\widetilde M)$. Thus we get the Witten derivative $d_{\widetilde M,z}=e^{-zF}\,d_{\widetilde M}\,e^{zF}$ on $C^\infty(\widetilde M;\Lambda)$, which corresponds to the Novikov derivative $d_{M,z}$ on $C^\infty(M;\Lambda)$ via $\pi^*$. 

For any smooth map $\phi:M\to M$, take a lift $\tilde\phi:\widetilde M\to\widetilde M$; i.e., $\pi\tilde\phi=\phi\pi$. Then $\tilde\phi^*_z=e^{-zF}\,\tilde\phi^*\,e^{zF}=e^{z(\tilde\phi^*F-F)}\,\tilde\phi^*$ is an endomorphism of Witten differential complex $(C^\infty(\widetilde M;\Lambda),d_{\widetilde M,z})$, which can be called a {\em Witten perturbation\/} of $\tilde\phi^*$. For all $\gamma\in\Gamma$, we have $T_\gamma^*(\tilde\phi^*F-F)=\tilde\phi^*F-F$, obtaining $T_\gamma^*\tilde\phi^*_z=\tilde\phi^*_zT_\gamma^*$. Therefore $\tilde\phi^*_z$ induces an endomorphism $\phi^*_z$ of Witten differential complex $d_{M,z}$ on $C^\infty(M;\Lambda)$, which can be considered as a {\em Novikov perturbation\/} of $\phi^*$. This $\phi^*_z$ depends on the choice of the lift $\tilde\phi$ of $\phi$. However, any flow $\phi=\{\phi^t\}$ has a unique lift to a flow $\tilde\phi=\{\tilde\phi^t\}$ on $\widetilde M$, giving rise to a canonical choice of $\phi^{t*}_z$, called the {\em Novikov perturbation\/} of $\phi^{t*}$.

If $M$ is oriented, then 
\begin{equation}\label{delta = (-1)^{nr+n+1} star d star}
{\theta\!\lrcorner}=(-1)^{nr+n+1}\star\,{\theta\wedge}\,\star\;,\quad\delta=(-1)^{nr+n+1}\star d\,\star\;,
\end{equation}
on $C^\infty(M;\Lambda^r)$, using the Hodge operator $\star$ on $\Lambda M$. So
\begin{equation}\label{delta_z = (-1)^nr+n+1 star d_-bar z star}
\delta_z=(-1)^{nr+n+1}\star d_{-\bar z}\,\star\;.
\end{equation}

\section{Preliminaries on bounded geometry}\label{s: bd geom}

The concepts recalled here become relevant when $M$ is not compact. Equip $M$ with a Riemannian metric $g$, and let $\nabla$ denote its Levi-Civita connection, $R$ its curvature, and $\inj_M:M\to\R^+$ its injectivity radius function. Suppose that $M$ is connected, obtaining an induced distance function $d$. Actually, in the non-connected case, we can take $d(p,q)=\infty$ if $p$ and $q$ belong to different connected components. Observe that $M$ is complete if $\inf\inj_M>0$. For $r>0$, $p\in M$ and $S\subset M$, let $B(p,r)$ and $\overline B(p,r)$ denote the open and closed $r$-balls centered at $p$, and $\Pen(S,r)$ and le $\overline{\Pen}(S,r)$ denote the open and closed $r$-penumbras of $S$ (defined by the conditions $d(\cdot,S)<r$ and $d(\cdot,S)\le r$, respectively). We may add the subindex ``$M$'' to this notation if needed, or a subindex ``$a$'' if we are referring to a family of Riemannian manifolds $M_a$.

\subsection{Manifolds and vector bundles of bounded geometry}\label{ss: mfds and vector bdls of bd geom}

It is said that $M$ is of {\em bounded geometry\/} if $\inf\inj_M>0$ and $\sup|\nabla^mR|<\infty$ for every $m\in\N_0$. This concept has the following chart description. 

\begin{thm}[Eichhorn \cite{Eichhorn1991}; see also \cite{Roe1988I,Schick1996,Schick2001}]\label{t: mfd of bd geom}
$M$ is of bounded geometry if and only if, for some open ball $B\subset\R^n$ centered at $0$, there are normal coordinates $y_p:V_p\to B$ at every $p\in M$ such that the corresponding Christoffel symbols $\Gamma^i_{jk}$, as a family of functions on $B$ parametrized by $i$, $j$, $k$ and $p$, lie in a bounded set of the Fr\'echet space $C^\infty(B)$. This equivalence holds as well replacing the Christoffel symbols with the metric coefficients $g_{ij}$.
\end{thm}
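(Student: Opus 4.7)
The plan is to establish both implications through the standard dictionary between curvature and the metric read in Riemannian normal coordinates. For each $p\in M$ fix an orthonormal frame of $T_pM$, which together with $\exp_p$ defines a candidate chart $y_p$ on the largest ball $B(0,r)\subset\R^n$ on which $\exp_p$ is a diffeomorphism, with $g_{ij}=g(\partial/\partial y_p^i,\partial/\partial y_p^j)$.

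For the ``only if'' direction, assume $M$ has bounded geometry and pick any $0<r<\inf\inj_M$. Along each straight ray through $0$, the coordinate vector fields restrict to Jacobi fields of the corresponding radial geodesic, so the matrix $(g_{ij})$ satisfies a radial Jacobi-type ODE whose right-hand side is polynomial in $g_{ij}$, its first derivatives, and the components of the curvature tensor read in the chart. Tangentially differentiating and iterating produces, for each $m\in\N_0$, an ODE with coefficients polynomial in the $g_{ij}$ and their derivatives of order $<m$ together with the components of $\nabla^kR$ for $k\le m-2$. The hypotheses $\sup|\nabla^kR|<\infty$ then drive an induction on $m$ that bounds the $g_{ij}$ uniformly in $C^m(B)$, provided $r$ has been shrunk (depending only on $\sup|R|$) so that $(g_{ij})$ stays in a fixed compact subset of positive definite matrices. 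Since
\[
\Gamma^i_{jk}=\tfrac12 g^{il}(\partial_jg_{kl}+\partial_kg_{jl}-\partial_lg_{jk}),
\]
the bounds transfer to the $\Gamma^i_{jk}$; conversely $g_{ij}$ is recovered from $\Gamma^i_{jk}$ by integrating $\partial_kg_{ij}=\Gamma_{ki,j}+\Gamma_{kj,i}$ from $g_{ij}(0)=\delta_{ij}$, so the two versions of the conclusion are interchangeable.

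For the ``if'' direction, start from charts $y_p:V_p\to B$ with either the $g_{ij}$ or the $\Gamma^i_{jk}$ bounded in $C^\infty(B)$. As the coordinates are normal, $g_{ij}(0)=\delta_{ij}$ and $\Gamma^i_{jk}(0)=0$, so a uniform $C^1$ bound together with the fixed radius of $B$ keeps $(g_{ij})$ in a fixed compact subset of positive definite matrices throughout $B$; in particular $\exp_p$ is a diffeomorphism on $B$ and $\inj_M(p)$ is bounded below by its radius. The components of $R$ and its iterated covariant derivatives are universal polynomial expressions in the $g_{ij}$, $g^{ij}$, $\Gamma^i_{jk}$ and their coordinate derivatives, so evaluating at $0$ and using that $g_{ij}(0)=\delta_{ij}$ converts the hypothesis into uniform pointwise bounds on all $|\nabla^mR|$.

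The main obstacle is the inductive step in the first direction: propagating $C^m$ control of $g_{ij}$ outward from $p$ requires handling the coordinate singularity of polar coordinates at the origin and the nonlinear coupling between $g_{ij}$, $\Gamma^i_{jk}$ and $R$. Once the equations are arranged in a form to which standard results on smooth dependence of ODE solutions on parameters apply, the constants controlling each $\|g_{ij}\|_{C^m(B)}$ depend only on the $\sup|\nabla^kR|$ for $k\le m$ and on $r$, hence can be chosen uniformly in $p$.
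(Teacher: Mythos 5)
This theorem is stated in the paper with citations to Eichhorn and to Roe and Schick, but it is \emph{not} proved there, so there is no in-paper argument to compare against. Your outline follows the standard route found in those references: write the metric coefficients in normal coordinates as solutions of Jacobi-type radial ODEs whose coefficients involve the curvature, bootstrap the $\sup|\nabla^kR|$ bounds into $C^m(B)$ bounds on $g_{ij}$, and use the universal coordinate formulas for $\nabla^mR$ for the converse, passing between $g_{ij}$ and $\Gamma^i_{jk}$ via the metric-compatibility ODE.

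Two remarks on the reasoning. In the ``if'' direction, the lower bound on $\inj_M$ is immediate from the hypothesis: having a normal chart $y_p\colon V_p\to B$ on a ball of fixed radius $r_0$ already means $\exp_p$ is a diffeomorphism from the $r_0$-ball onto $V_p$, so $\inj_M(p)\ge r_0$. Your intermediate claim that a uniform $C^1$ bound keeps $(g_{ij})$ in a fixed compact set of positive-definite matrices throughout $B$ is not needed for this, and as written it is not clear when the $C^1$ constant is large compared with $r_0$; on the other hand, your device of evaluating the universal formulas at the chart origin, where $g_{ij}(0)=\delta_{ij}$ and hence $g^{ij}(0)=\delta^{ij}$, is exactly the right way to avoid needing uniform control of $g^{-1}$ on all of $B$. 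In the ``only if'' direction, the inductive Jacobi-equation bootstrap is asserted but not carried out; you acknowledge this yourself, and it is indeed the technical core of the theorem, occupying the bulk of the argument in Eichhorn's and Schick's treatments. As an outline this is sound; as a complete proof it would require that inductive estimate executed in detail.
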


\begin{rem}\label{r: equi-bounded geometry}
Any non-connected Riemannian manifold of bounded geometry can be considered as a family of Riemannian manifolds (the connected components), which are of \emph{equi-bounded geometry} in the sense that they satisfy the condition of bounded geometry with the same bounds. Conversely, any disjoint union of Riemannian manifolds of equi-bounded geometry is of bounded geometry.
\end{rem}

Assume that $M$ is of bounded geometry and consider the charts $y_p:V_p\to B$ given by Theorem~\ref{t: mfd of bd geom}. The radius of $B$ will be denoted by $r_0$. 

\begin{prop}[{Schick \cite[Theorem~A.22]{Schick1996}, \cite[Proposition~3.3]{Schick2001}}]\label{p: |partial_I(y_q y_p^-1)|}
For every multi-index $I$, the function $|\partial_I(y_qy_p^{-1})|$ is bounded on $y_p(V_p\cap V_q)$, uniformly on $p,q\in M$.
\end{prop}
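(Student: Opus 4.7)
Let me write $\phi = y_q \circ y_p^{-1}\colon y_p(V_p\cap V_q)\to y_q(V_p\cap V_q)$, so that the claim becomes a uniform $C^\infty$ bound on $\phi$ as $p,q$ vary over $M$. The basic idea is to first bound the first derivative $J := d\phi$ using the fact that $\phi$ is an isometry between two representations of the same Riemannian metric, and then obtain all higher derivatives by a bootstrap based on the transformation law for Christoffel symbols under a change of coordinates, invoking Theorem~\ref{t: mfd of bd geom}.

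For the first derivative, denote by $G^{(p)}(v)=(g^{(p)}_{ij}(v))$ and $G^{(q)}(v)=(g^{(q)}_{ij}(v))$ the matrices of metric coefficients in the charts $y_p$ and $y_q$. Because $\phi$ is a change of coordinates relating the same metric $g$ in the two normal systems, one has the identity $G^{(p)}(v)=J(v)^{T}\,G^{(q)}(\phi(v))\,J(v)$ on $y_p(V_p\cap V_q)$. By Theorem~\ref{t: mfd of bd geom}, the entries of $G^{(p)}$ and $G^{(q)}$ lie in a bounded set of $C^\infty(B)$, uniformly in $p,q$. Moreover $G^{(p)}(0)=I$, and using the uniform $C^1$ bounds together with the uniform positive lower bound on $\inj_M$, one sees that the eigenvalues of $G^{(p)}$ and $G^{(q)}$ are contained in a fixed compact subinterval of $(0,\infty)$ on $B$, uniformly in $p,q$ (shrinking $r_0$ a priori if needed; this can also be read off from the Jacobi field comparison given by the uniform curvature bound). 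From the identity above one then gets $|J(v)w|^2 \le c\,|J(v)w|^2_{G^{(q)}(\phi(v))}=c\,|w|^2_{G^{(p)}(v)}\le c\,C\,|w|^2$, so that $|J|$ is uniformly bounded, and the same argument applied to $\phi^{-1}$ (which is of the same form with $p,q$ exchanged) bounds $|J^{-1}|$.

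For derivatives of order $\ge 2$, I would use the standard transformation rule for Christoffel symbols. Writing $\Gamma^{(p)}$ and $\Gamma^{(q)}$ for the Christoffel symbols in the two charts, the rule rearranges to
\[
\frac{\partial^2\phi^k}{\partial v^a\partial v^b}(v)
=\frac{\partial\phi^k}{\partial v^c}(v)\,\Gamma^{(p)\,c}_{ab}(v)
-\Gamma^{(q)\,k}_{ij}(\phi(v))\,\frac{\partial\phi^i}{\partial v^a}(v)\,\frac{\partial\phi^j}{\partial v^b}(v)\;.
\]
The right-hand side is a polynomial in the entries of $J$ and the Christoffel symbols of both charts evaluated at $v$ or at $\phi(v)$. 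By Theorem~\ref{t: mfd of bd geom} both families of Christoffel symbols lie in a bounded set of $C^\infty(B)$, uniformly in $p,q$, and by the previous step $J$ and $J^{-1}$ are uniformly bounded. Hence $\partial^2\phi$ is uniformly bounded. Differentiating the displayed identity inductively and invoking the Fa\`a di Bruno formula to control the resulting chain-rule compositions $\Gamma^{(q)}\circ\phi$ (together with the bounds on lower-order derivatives of $\phi$ already obtained), one concludes that $|\partial_I\phi|$ is uniformly bounded for every multi-index~$I$.

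The main obstacle is the first-derivative bound: the uniform $C^\infty$ control on the metric coefficients from Theorem~\ref{t: mfd of bd geom} is only a one-sided estimate, and one needs the uniform lower positivity of $G^{(p)}$ and $G^{(q)}$ on $B$ to invert the isometry relation. This is where the uniform lower bound on the injectivity radius (equivalently, the bounded curvature via Jacobi field comparison) enters crucially; once this step is settled, the higher-order estimates are a routine induction from the Christoffel transformation formula.
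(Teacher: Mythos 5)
The paper does not reproduce a proof of this proposition: it cites Schick's thesis (Theorem~A.22) and Schick~2001 (Proposition~3.3). Your argument — first-derivative bound from the isometry relation $G^{(p)} = J^{T}(G^{(q)}\circ\phi)\,J$ together with the uniform two-sided positivity of the metric matrices in normal coordinates, then the rearranged Christoffel transformation law
$\partial_a\partial_b\phi^{k} = \partial_c\phi^{k}\,\Gamma^{(p)\,c}_{ab} - (\Gamma^{(q)\,k}_{ij}\circ\phi)\,\partial_a\phi^{i}\,\partial_b\phi^{j}$
bootstrapped inductively via Fa\`a di Bruno — is correct and is essentially the standard argument underlying the cited result, so there is nothing to distinguish here.

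One point worth tightening: the statement of Theorem~\ref{t: mfd of bd geom} as reproduced in the paper only asserts that the $g_{ij}$ (or $\Gamma^{i}_{jk}$) lie in a bounded set of $C^\infty(B)$, which is an upper bound; the uniform lower bound on the eigenvalues of $G^{(p)}$ — equivalently, a uniform bound on $g^{ij}$ — is used in your first step and does not follow from the bare wording of that theorem. It does hold, as you note, either by shrinking $r_0$ so that $\|G^{(p)}-I\|_{C^{0}} < 1$ on the (smaller) ball via the $C^{1}$ bound and $G^{(p)}(0)=I$, or more cleanly by Rauch/Jacobi-field comparison from the curvature bound and $\inf\inj_{M}>0$ on the original ball; the full statements in Eichhorn, Shubin and Schick include the bound on $g^{ij}$ explicitly. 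Either way this closes the only gap, and the rest of your induction is airtight.
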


\begin{prop}[{Shubin \cite[Appendices~A1.2 and~A1.3]{Shubin1992}; see also \cite[Proposition~3.2]{Schick2001}}]\label{p: p_k}
For any $0<2r\le r_0$, there are a subset $\{p_k\}\subset M$ and some $N\in\N$ such that the balls $B(p_k,r)$ cover $M$, and every intersection of $N+1$ sets $B(p_k,2r)$ is empty. Moreover there is a partition of unity $\{f_k\}$ subordinated to the open covering $\{B(p_k,2r)\}$, which is bounded in the Fr\'echet space\footnote{The definition of $\Cinftyub(M)$ is given in Section~\ref{ss: uniform sps}.} $\Cinftyub(M)$. 
\end{prop}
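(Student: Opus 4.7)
The plan is to build the covering by a standard Vitali-type maximality argument, then control its multiplicity via volume comparison, and finally construct the partition of unity by pulling back a single fixed bump function through all the normal charts given by Theorem~\ref{t: mfd of bd geom}.

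\emph{Step 1: choice of the points.} Choose $\{p_k\}\subset M$ maximal with the property that $d(p_k,p_l)\ge r$ whenever $k\ne l$ (Zorn, with countability guaranteed since $M$ is second countable). By maximality, for every $p\in M$ there is some $k$ with $d(p,p_k)<r$, so $\{B(p_k,r)\}$ covers $M$. By construction, the smaller balls $\{B(p_k,r/2)\}$ are pairwise disjoint.

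\emph{Step 2: bounded multiplicity.} Suppose $p\in\bigcap_{k\in K}B(p_k,2r)$ for some index set $K$. Then every $p_k$, $k\in K$, lies in $B(p,2r)$, and the disjoint balls $B(p_k,r/2)$ are all contained in $B(p,5r/2)$. Theorem~\ref{t: mfd of bd geom} shows that, in the normal coordinates $y_p$, the metric coefficients $g_{ij}$ form a bounded family in $C^\infty(B)$ with $\det(g_{ij})$ bounded away from zero; hence the Riemannian volume densities $\sqrt{\det g_{ij}}$ are bounded above and below uniformly in $p$. This yields a uniform upper bound $V_+$ for $\vol B(p,5r/2)$ and a uniform lower bound $v_->0$ for $\vol B(p_k,r/2)$, so $|K|\le V_+/v_-$. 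Set $N=\lfloor V_+/v_-\rfloor$.

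\emph{Step 3: partition of unity.} Pick once and for all $\chi\in\Cinftyc(\R^n)$ with $0\le\chi\le1$, $\chi\equiv1$ on the Euclidean ball of radius $r$, and $\supp\chi$ contained in the Euclidean ball of radius $2r\le r_0$. Put
\[
\tilde f_k=\chi\circ y_{p_k}\quad\text{on }V_{p_k},\qquad\tilde f_k=0\quad\text{elsewhere}.
\]
Since $y_{p_k}$ is a normal chart on $B(p_k,r_0)$, the preimage under $y_{p_k}$ of the Euclidean ball of radius $s<r_0$ is exactly $B(p_k,s)$; so $\tilde f_k\equiv 1$ on $B(p_k,r)$ and $\supp\tilde f_k\subset\overline{B}(p_k,2r)$. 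The covering property from Step~1 gives $\sum_k\tilde f_k\ge 1$ on $M$, and Step~2 says at most $N$ of the $\tilde f_k$ are nonzero at any point. Define $f_k=\tilde f_k/\sum_l\tilde f_l$.

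\emph{Step 4: boundedness in $\Cinftyub(M)$.} It suffices to bound all iterated partial derivatives of $f_k\circ y_p^{-1}$ on $B$, uniformly in $k$ and $p$. For the numerator, write $\tilde f_k\circ y_p^{-1}=\chi\circ(y_{p_k}\circ y_p^{-1})$ on $y_p(V_p\cap V_{p_k})$; by the chain rule and Proposition~\ref{p: |partial_I(y_q y_p^-1)|}, every derivative is bounded uniformly in $k,p$. The denominator $\sum_l\tilde f_l$ is locally a sum of at most $N$ such functions and is bounded below by $1$, so $1/\sum_l\tilde f_l$ and all its derivatives are uniformly bounded. Hence $\{f_k\}$ is bounded in $\Cinftyub(M)$.

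The main obstacle is Step~2: getting uniform volume comparison. Everything else is formal once the covering and multiplicity are under control, but the uniform upper and lower bounds on $\vol B(p,s)$ for the relevant radii really do require the full force of Theorem~\ref{t: mfd of bd geom}, not merely $\inf\inj_M>0$.
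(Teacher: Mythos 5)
The paper itself gives no proof of this Proposition; it cites it directly to Shubin and Schick, so there is no "paper's own proof" to compare against. Your approach is the standard one found in those references (maximal separated set, packing bound, normal-chart bump functions), and Steps 1, 3, and 4 are sound. Step 4 in particular correctly identifies that Proposition~\ref{p: |partial_I(y_q y_p^-1)|} is exactly what makes the quotients $f_k$ uniformly controlled.

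However, Step 2 as written has a gap. You claim a uniform upper bound $V_+$ on $\vol B(p,5r/2)$ by reading off the volume density $\sqrt{\det g_{ij}}$ in the normal chart $y_p:V_p\to B$. But that chart only covers $B(p,r_0)$, and the hypothesis is $2r\le r_0$, so $5r/2$ may exceed $r_0$ (take $r=r_0/2$, giving $5r/2=5r_0/4$); hence $B(p,5r/2)$ need not lie in $V_p$, and the chart description of Theorem~\ref{t: mfd of bd geom} does not directly bound its volume. (Trying to cover $B(p,5r/2)$ by finitely many charts runs into circularity, since controlling how many charts are needed is essentially the multiplicity bound you are trying to prove.) The fix is easy and avoids volumes of large balls altogether: the centers $p_k$, $k\in K$, all lie in $B(p,2r)\subset V_p$ and are pairwise $\ge r$ apart in the Riemannian metric; since the $g_{ij}$ and $g^{ij}$ are uniformly bounded on $B$, the chart $y_p$ is uniformly bi-Lipschitz onto its image, so the points $y_p(p_k)$ are $\ge cr$ separated in $\R^n$ and contained in a Euclidean ball of radius $\le 2Cr$, with $c,C$ independent of $p$. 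A purely Euclidean packing count then bounds $|K|$ by a constant depending only on $n$ and $C/c$. Alternatively one can invoke Bishop--Gromov relative volume comparison, which is available because bounded geometry gives a uniform lower Ricci bound; this again gives $\vol B(p,5r/2)\le V_+$ without needing $B(p,5r/2)$ to sit inside one chart.
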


A vector bundle $E$ of rank $l$ over $M$ is said to be of {\em bounded geometry\/} when it is equipped with a family of local trivializations over the charts $(V_p,y_p)$, for small enough $r_0$, with corresponding defining cocycle $a_{pq}:V_p\cap V_q\to\GL(\C,l)\subset\C^{l^2}$, such that, For every multi-index $I$, the function $|\partial_I(a_{pq}y_p^{-1})|$ is bounded on $y_p(V_p\cap V_q)$, uniformly on $p,q\in M$. When referring to local trivializations of a vector bundle of bounded geometry, we always mean that they satisfy the above condition. If the corresponding defining cocycle is valued in $\operatorname{U}(l)$, then $E$ is said to be of {\em bounded geometry\/} as a Hermitian vector bundle.

\begin{ex}\label{ex: bundles of bd geom}
\begin{enumerate}[{\rm(}i\/{\rm)}]

\item\label{i: associated bundle} If $E$ is associated to the principal $\operatorname{O}(n)$-bundle $P$ of orthonormal frames of $M$ and any unitary representation of $\operatorname{O}(n)$, then it is of bounded geometry in a canonical way. In particular, this applies to $TM$ and $\Lambda M$.

\item\label{i: bundle associated to a reduction} The properties of~\eqref{i: associated bundle} can be extended to the case where $E$ is associated to any reduction $Q$ of $P$ with structural group $H\subset\operatorname{O}(n)$, and any unitary representation of $H$. 

\item\label{i: natural operations} The condition of bounded geometry is preserved by operations of vector bundles induced by operations of vector spaces, like dual vector bundles, direct sums, tensor products, exterior products, etc.

\end{enumerate}
\end{ex}

\subsection{Uniform spaces}\label{ss: uniform sps}

For every $m\in\N_0$, a function $u\in C^m(M)$ is said to be {\em $C^m$-uniformly bounded\/} if there is some $C_m\ge0$ with $|\nabla^{m'}u|\le C_m$ on $M$ for all $m'\le m$. These functions form the {\em uniform $C^m$ space\/}\footnote{Here, the subindex ``ub'' is used instead of the common subindex ``b'' to avoid similarities with the notation of b-calculus used in \cite{AlvKordyLeichtnam-atffff}.} $C_{\text{\rm ub}}^m(M)$, which is a Banach space with the norm $\|\cdot\|_{C^m_{\text{\rm ub}}}$ defined by the best constant $C_m$. As usual, the super-index ``$m$'' may be removed from this notation if $m=0$, and we have $C_{\text{\rm ub}}(M)=C(M)\cap L^\infty(M)$. Equivalently, we may take the norm $\|\cdot\|'_{C^m_{\text{\rm ub}}}$ defined by the best constant $C'_m\ge0$ such that $|\partial_I(uy_p^{-1})|\le C'_m$ on $B$ for all $p\in M$ and $|I|\le m$; in fact, by Proposition~\ref{p: |partial_I(y_q y_p^-1)|}, it is enough to consider any subset of points $p$ so that $\{V_p\}$ covers $M$. The {\em uniform $C^\infty$ space\/} is $\Cinftyub(M)=\bigcap_mC_{\text{\rm ub}}^m(M)$, with the inverse limit topology, called {\em uniform $C^\infty$ topology\/}. It consists of the functions $u\in C^\infty(M)$ such that all functions $uy_p^{-1}$ lie in a bounded set of $C^\infty(B)$, which are said to be {\em $C^\infty$-uniformly bounded\/}. Of course, if $M$ is compact, then the $C^m_{\text{\rm ub}}$ topology is just the $C^m$ topology, and the notation $\|\cdot\|_{C^m}$ and $\|\cdot\|'_{C^m}$ is preferred. On the other hand, the definition of uniform spaces with covariant derivative can be also considered for non-complete Riemannian manifolds.

For a Hermitian vector bundle $E$ of bounded geometry over $M$, the {\em uniform $C^m$ space\/} $C_{\text{\rm ub}}^m(M;E)$, of {\em $C^m$-uniformly bounded\/} sections, can be defined by introducing $\|\cdot\|'_{C^m_{\text{\rm ub}}}$ like the case of functions, using local trivializations of $E$ to consider every $uy_p^{-1}$ in $C^m(B,\C^l)$ for all $u\in C^m(M;E)$. Then, as above, we get the {\em uniform $C^\infty$ space\/} $\Cinftyub(M;E)$ of {\em $C^\infty$-uniformly bounded\/} sections, which are the sections $u\in C^\infty(M;E)$ such that all functions $uy_p^{-1}$ define a bounded set of $C^\infty(B;\C^l)$, equipped with the {\em uniform $C^\infty$ topology\/}. In particular, $\fX_{\text{\rm ub}}(M):=\Cinftyub(M;TM)$ is a $\Cinftyub(M)$-submodule and Lie subalgebra of $\fX(M)$. Observe that 
\begin{equation}\label{C^m_ub(M)}
C^m_{\text{\rm ub}}(M)=\{\,u\in C^m(M)\mid\fX_{\text{\rm ub}}(M)\,\overset{(m)}{\cdots}\,\fX_{\text{\rm ub}}(M)\,u\subset L^\infty(M)\,\}\;.
\end{equation}
Let $\fXcom(M)\subset\fX(M)$ be the subset of complete vector fields.

\begin{prop}\label{p: fX_ub(M) subset fX_com(M)}
$\fX_{\text{\rm ub}}(M)\subset\fXcom(M)$.
\end{prop}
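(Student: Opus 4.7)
The plan is to exploit that bounded geometry makes $M$ metrically complete, and that $C^\infty$-uniform boundedness bounds the speed of integral curves, so the standard escape lemma gives completeness of the flow.

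First I would observe that the hypothesis of bounded geometry implies $\inf\inj_M>0$, and in particular $M$ is geodesically complete: one can extend any geodesic by applying the exponential map at the current endpoint, which is defined on a fixed ball of radius $\inf\inj_M>0$. By the Hopf--Rinow theorem, $M$ is then metrically complete and every closed metrically bounded subset of $M$ is compact. (If $M$ is not connected, apply this component by component, using Remark~\ref{r: equi-bounded geometry}.)

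Next, given $X\in\fX_{\text{\rm ub}}(M)$, I would use that $|X|\in L^\infty(M)$, say $|X|\le C$ pointwise. Fix $p\in M$ and let $\gamma:(a,b)\to M$ be the maximal integral curve of $X$ with $\gamma(0)=p$. Suppose for contradiction that $b<\infty$. Then, since $|\dot\gamma(t)|=|X(\gamma(t))|\le C$, the curve is $C$-Lipschitz with respect to $d$, so
\[
\gamma\bigl([0,b)\bigr)\subset\overline B(p,Cb)\;.
\]
By the previous step this closed ball is compact. Apply the standard escape lemma for smooth ODEs on manifolds: an integral curve of a smooth vector field that remains in a compact set on a finite time interval extends past the endpoint of that interval. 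This contradicts the maximality of $b$. The same argument, run backward in time, rules out $a>-\infty$. Hence $\gamma$ is defined on all of $\R$, and since $p$ was arbitrary, $X$ is complete.

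I expect no serious obstacle here; the only delicate point is justifying that bounded geometry yields metric completeness, which follows from $\inf\inj_M>0$ by the iterated-exponential argument sketched above. Everything else is classical ODE theory on manifolds.
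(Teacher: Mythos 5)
Your proof is correct, but it takes a genuinely different route from the paper's. You first establish metric completeness of $M$ from $\inf\inj_M>0$ (a fact the paper already records just before Theorem~\ref{t: mfd of bd geom}), then use the pointwise bound $|X|\le C$ to trap any finite-time piece of a maximal integral curve inside a closed ball, which Hopf--Rinow makes compact, and close with the standard escape lemma. The paper instead applies the Picard--Lindel\"of theorem in the uniformly-sized normal charts $(V_p,y_p)$ of bounded geometry to obtain a \emph{uniform} lower bound $\alpha>0$ on the existence time of the local flow of $X$ at every point of $M$, and then concludes $\Omega=M\times\R$ from the semigroup property of the local flow. The two routes buy slightly different things: your argument uses only the $L^\infty$ bound on $X$ together with completeness of $M$, which is essentially the minimal hypothesis under which the conclusion holds; the paper's argument stays entirely inside the bounded-geometry chart framework, never appeals to Hopf--Rinow, and has the extra benefit of producing the quantitative uniform existence time $\alpha$, a formulation that meshes naturally with the subsequent Proposition~\ref{p: X is C^infty-uniformly bd <=> phi^t is of R-local bd geom}. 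Both are short, classical, and need no higher-order derivative bounds on $X$ beyond smoothness.
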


\begin{proof}
Let $X\in\fX_{\text{\rm ub}}(M)$. The maximal domain of the local flow $\phi$ of $X$ is an open neighborhood $\Omega$ of $M\times\{0\}$ in $M\times\R$. By the Picard-Lindel\"of Theorem (see e.g.\ \cite[Theorem~II.1.1]{Hartman1964}) and the $C^\infty$-uniform boundedness of $X$, there is some $\alpha>0$ such that $\{p\}\times(-\alpha,\alpha)\subset\Omega$ for all $p\in M$. So $\Omega=M\times\R$ since $\phi$ is a local flow.
\end{proof}

\subsection{Differential operators of bounded geometry}\label{ss: diff ops of bd geom}

Like in Section~\ref{ss: diff ops}, by using $\fX_{\text{\rm ub}}(M)$ and $\Cinftyub(M)$ instead of $\fX(M)$ and $C^\infty(M)$, we get the filtered subalgebra and $\Cinftyub(M)$-submodule $\Diffub(M)\subset\Diff(M)$ of differential operators of {\em bounded geometry\/}. Let $P_{\text{\rm ub}}(T^*M)\subset P(T^*M)$ be the graded subalgebra generated by $P_{\text{\rm ub}}^{[0]}(T^*M)\equiv \Cinftyub(M)$ and $P_{\text{\rm ub}}^{[1]}(T^*M)\equiv \fX_{\text{\rm ub}}(M)$, which is also a graded $\Cinftyub(M)$-submodule. Then~\eqref{sigma_m} restricts to a surjection $\sigma_m:\Diffub^m(M)\to P_{\text{\rm ub}}^{[m]}(T^*M)$ whose kernel is $\Diffub^{m-1}(M)$. These concepts can be extended to vector bundles of bounded geometry $E$ and $F$ over $M$ by taking the $\Cinftyub(M)$-tensor product with $\Cinftyub(M;F\otimes E^*)$, obtaining the filtered $\Cinftyub(M)$-modules $\Diffub(M;E,F)$ (or $\Diffub(M;E)$ if $E=F$) and $P_{\text{\rm ub}}(T^*M;F\otimes E^*)$, and the surjective restriction
\[
\sigma_m:\Diffub^m(M;E,F)\to P_{\text{\rm ub}}^{[m]}(T^*M;F\otimes E^*)
\]
of~\eqref{sigma_m on Diff^m(M;E,F)}, whose kernel is $\Diffub^{m-1}(M;E,F)$. Bounded geometry of differential operators is preserved by compositions and by taking transposes, and by taking formal adjoints in the case of Hermitian vector bundles of bounded geometry; in particular, $\Diffub(M;E)$ is a filtered subalgebra of $\Diff(M;E)$. Using local trivializations of $E$ and $F$ over the charts $(V_p,y_p)$, we get a local description of any element of $\Diffub^m(M;E,F)$ by requiring the local coefficients to define a bounded subset of the Fr\'echet space $C^\infty(B,\C^{l'}\otimes\C^{l*})$, where $l$ and $l'$ are the ranks of $E$ and $F$. Using the norms $\|\cdot\|'_{C^m_{\text{\rm ub}}}$, it easily follows that every $A\in\Diffub^m(M;E,F)$ defines bounded operators $A:C^{m+s}_{\text{\rm ub}}(M;E)\to C^s_{\text{\rm ub}}(M;F)$ ($s\in\N_0$), which induce a continuous operator $A:\Cinftyub(M;E)\to \Cinftyub(M;F)$.

\begin{ex}\label{ex: connections of bd geom}
\begin{enumerate}[{\rm(}i\/{\rm)}]

\item\label{i: connection of the associated bundle} In Example~\ref{ex: bundles of bd geom}~\eqref{i: associated bundle}, the Levi-Civita connection $\nabla$ induces a connection of bounded geometry on $E$, also denoted by $\nabla$. In particular, $\nabla$ itself is of bounded geometry on $TM$, and induces a connection $\nabla$ of bounded geometry on $\Lambda M$. This holds as well for the connection on $E$ induced by any other Riemannian connection of bounded geometry on $TM$. 

\item\label{i: connection of bundle associated to a reduction}  In Example~\ref{ex: bundles of bd geom}~\eqref{i: bundle associated to a reduction}, if a Riemannian connection of bounded geometry on $TM$ is given by a connection on $Q$, then the induced connection on $E$ is of bounded geometry.

\item\label{i: connections induced by natural operations} In Example~\ref{ex: bundles of bd geom}~\eqref{i: natural operations}, bounded geometry of connections is preserved by taking the induced connections in the indicated operations with vector bundles of bounded geometry. 

\item\label{i: d is of boundedgeometry} The standard expression of the de~Rham derivative $d$ on local coordinates shows that it is of bounded geometry, and therefore $\delta$ is also of bounded geometry. 

\end{enumerate}
\end{ex}

Let $E$ and $F$ be Hermitian vector bundles of bounded geometry. Then any unitary connection $\nabla$ of bounded geometry on $E$ can be used to define an equivalent norm $\|\cdot\|_{C^m_{\text{\rm ub}}}$ on every Banach space $C_{\text{\rm ub}}^m(M;E)$, like in the case of $C_{\text{\rm ub}}^m(M)$. 

It is said that $A\in\Diff^m(M;E,F)$ is {\em uniformly elliptic\/} if there is some $C\ge1$ such that, for all $p\in M$ and $\xi\in T^*_pM$,
\[
C^{-1}|\xi|^m\le|\sigma_m(A)(p,\xi)|\le C|\xi|^m\;.
\]
This condition is independent of the choice of the Hermitian metrics of bounded geometry on $E$ and $F$. Any $A\in\Diff^m_{\text{\rm ub}}(M;E,F)$ satisfies the second inequality.

\subsection{Sobolev spaces of manifolds of bounded geometry}\label{ss: Sobolev, bd geom}

For any Hermitian vector bundle $E$ of bounded geometry over $M$, any choice of a uniformly elliptic $P\in\Diff^1_{\text{\rm ub}}(M;E)$, besides the Riemannian density and the Hermitian structure, can be used to define the Sobolev space $H^m(M;E)$ ($m\in\R$) (Section~\ref{ss: Sobolev sps}). Any choice of $P$ defines the same Hilbertian space $H^m(M;E)$, which is a $\Cinftyub(M)$-module. Every $A\in\Diffub^m(M;E,F)$ defines bounded operators $A:H^{m+s}(M;E)\to H^s(M;F)$ ($s\in\R$), which induce continuous maps $A:H^{\pm\infty}(M;E)\to H^{\pm\infty}(M;F)$.

\begin{prop}[{Roe \cite[Proposition~2.8]{Roe1988I}}]\label{p: Sobolev embedding with bd geometry}
If $m'>m+n/2$, then $H^{m'}(M;E)\subset C_{\text{\rm ub}}^m(M;E)$, continuously. Thus $H^\infty(M;E)\subset \Cinftyub(M;E)$, continuously.
\end{prop}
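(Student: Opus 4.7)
The plan is to localize via the uniform normal-coordinate charts $(V_p,y_p)$ supplied by Theorem~\ref{t: mfd of bd geom}, trivializations of $E$ of bounded geometry in Section~\ref{ss: mfds and vector bdls of bd geom}, and to reduce to the classical Euclidean Sobolev embedding on the fixed ball $B\subset\R^n$. Fix $\chi_0\in\Cinftyc(B)$ with $\chi_0\equiv 1$ on a smaller ball $B'$ around $0$, and for each $p\in M$ put $\chi_p=\chi_0\circ y_p\in\Cinftyc(V_p)$. Proposition~\ref{p: |partial_I(y_q y_p^-1)|} implies that $\{\chi_p\}_{p\in M}$ is a bounded subset of $\Cinftyub(M)$, since in any chart $(V_q,y_q)$ the function $\chi_p$ is either zero or equals $\chi_0\circ(y_py_q^{-1})$, and the coordinate changes are uniformly $C^\infty$-bounded.

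First I would establish that multiplication by $\chi_p$ is bounded on $H^{m'}(M;E)$ uniformly in $p$. Using a uniformly elliptic $P\in\Diff^1_{\text{\rm ub}}(M;E)$ to define the $H^{m'}$-norm (as in Section~\ref{ss: Sobolev, bd geom}) and the fact that $\Diffub(M;E)$ is a filtered algebra, the commutators $[P,\chi_p]$ lie in a bounded subset of $\Diffub^0(M;E)$; iterating gives the uniform bound $\|\chi_p u\|_{H^{m'}(M;E)}\le C\|u\|_{H^{m'}(M;E)}$.

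Next I would transfer $\chi_p u$ to a compactly supported section $v_p\in H^{m'}_{\text{\rm c}}(B,\C^l)$ by means of the chart $y_p$ and the trivialization of $E$ over $V_p$. Because the metric coefficients $g_{ij}$ (hence the Riemannian density) and the defining cocycle $a_{pq}$ of $E$ are uniformly $C^\infty$-bounded, $P$ pulls back to an elliptic operator on $B$ whose coefficients lie in a bounded subset of $C^\infty(B;\C^{l^2})$. A standard Bessel-potential/G\aa rding argument then gives constants independent of $p$ with $\|v_p\|_{H^{m'}(B,\C^l)}\le C'\|\chi_p u\|_{H^{m'}(M;E)}\le C''\|u\|_{H^{m'}(M;E)}$. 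Now the classical Euclidean Sobolev embedding, applied to $v_p$, yields
\[
\sum_{|I|\le m}|\partial^I v_p(0)|\le C'''\,\|v_p\|_{H^{m'}(B,\C^l)}\le C''''\,\|u\|_{H^{m'}(M;E)}.
\]
Since $\chi_p\equiv 1$ near $p$, the quantities $\partial^I v_p(0)$ are exactly the local coordinate derivatives of $u$ at $p$ in the trivialization used; and by the uniform bound on the Christoffel symbols of $\nabla$ (Theorem~\ref{t: mfd of bd geom}) and of the connection on $E$ (Example~\ref{ex: connections of bd geom}), these are uniformly equivalent to the covariant derivatives $|\nabla^j u(p)|$ for $0\le j\le m$. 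Taking the supremum over $p\in M$ gives $\|u\|_{C^m_{\text{\rm ub}}(M;E)}\le C''''\|u\|_{H^{m'}(M;E)}$. Intersecting over all $m'$ delivers $H^\infty(M;E)\subset\Cinftyub(M;E)$ continuously.

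The main obstacle is the uniform comparison in the second step: the intrinsic norm $\|\cdot\|_{H^{m'}}$ is built from $(1+P^*P)^{m'/2}$ via the spectral theorem, while the Euclidean side uses the usual Bessel potential on $\R^n$. For integer $m'$ both are characterized by iterated application of $\fX_{\text{\rm ub}}(M)$-derivatives and then translate directly, thanks to the uniform $C^\infty$-bounds on $g_{ij}$, Christoffel symbols, and trivialization cocycles; for real $m'$ one interpolates. Once that uniform local-vs-global comparison is secured, the reduction to the Euclidean Sobolev embedding is routine.
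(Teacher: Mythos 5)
The paper offers no proof of this proposition, relying entirely on the citation to Roe's Proposition~2.8. Your argument --- uniform cutoffs $\chi_p=\chi_0\circ y_p$ in the normal charts, a commutator estimate to show $\chi_p$-multiplication is uniformly bounded on $H^{m'}(M;E)$, transfer to the fixed Euclidean ball $B$ with uniform norm equivalence supplied by the bounded-geometry hypotheses on $g_{ij}$, the Christoffel symbols and the cocycle $a_{pq}$, the classical Sobolev embedding on $B$, and conversion back to covariant derivatives --- is the standard proof of this fact and is in essence the argument underlying the cited result, and you correctly identify and resolve the one delicate point, namely the uniform comparison between the spectral $H^{m'}$-norm defined via $(1+P^*P)^{m'}$ and the chartwise Euclidean Sobolev norm.
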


\subsection{Schwartz kernels on manifolds of bounded geometry}\label{ss: Schwartz kernels, bd geom}

Let $E$ and $F$ be Hermitian vector bundles of bounded geometry over $M$.

\begin{prop}[{Roe \cite[Proposition~2.9]{Roe1988I}}]\label{p: Schwartz kernel with bd geometry}
The Schwartz kernel mapping, $A\mapsto K_A$, defines a continuous linear map
\[
L(H^{-\infty}(M;E),H^\infty(M;F))\to\Cinftyub(M;F\boxtimes(E^*\otimes\Omega))\;.
\]
\end{prop}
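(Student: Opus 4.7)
The plan is to reduce the proposition to two estimates: the Schwartz kernel theorem (which already gives us a distribution $K_A$ on $M^2$), plus uniform $C^k$ bounds on $K_A$ for every $k\in\N_0$.

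First I would observe that continuity of $A$ in the bounded topology on $L(H^{-\infty}(M;E),H^\infty(M;F))$ means that for every pair of real numbers $s,s'$ the restriction $A:H^{-s}(M;E)\to H^{s'}(M;F)$ is bounded, with operator norm $\|A\|_{-s,s'}$ a continuous seminorm of $A$. This is a standard consequence of the LF/Fréchet structures of source and target, together with the fact that bounded sets of $H^{-\infty}$ are bounded in some $H^{-s}$.

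Next I would record the ``uniformly bounded Dirac'' lemma dictated by bounded geometry. For $q\in M$ and $e\in E_q$ let $\delta_{q,e}\in H^{-\infty}_{\text{\rm c}}(M;E)$ denote the Dirac distribution at $q$ twisted by $e$ (built using the Riemannian density of $M$). Proposition~\ref{p: Sobolev embedding with bd geometry}, applied to $E^*\otimes\Omega$ and used with the bounded-geometry local trivializations of Theorem~\ref{t: mfd of bd geom}, implies that for any $s>n/2$ and any $D\in\Diffub^k(M;E)$ one has $\|D^{\text{\rm t}}\delta_{q,e}\|_{-s-k}\le C_{s,k}|e|$, uniformly in $q$. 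Dually, for $s'>n/2$ the evaluation $v\mapsto\langle Dv(p),\phi\rangle$ on $H^{s'+k}(M;F)$ has norm $\le C_{s',k}$ uniformly in $p\in M$ and unit $\phi\in F^*_p$.

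The main step then combines these two ingredients. First, smoothness of $K_A$ follows because for each $p$ the map $u\mapsto Au(p)$ is a continuous linear map $H^{-\infty}(M;E)\to F_p$, yielding $K_A(p,\cdot)\in H^{-\infty}_{\text{\rm c}}(M;E^*\otimes\Omega)$ depending smoothly on $p$; the same argument in the other variable, iterated, gives $K_A\in C^\infty$. For the uniform bounds, write $\nabla^{k_1}$ and $\nabla^{k_2}$ for the covariant derivatives in the two factors (with any fixed connections of bounded geometry on $F$ and $E^*\otimes\Omega$, cf.\ Example~\ref{ex: connections of bd geom}) and observe
\[
\bigl|\bigl(\nabla^{k_1}\boxtimes\nabla^{k_2}\bigr)K_A(p,q)(\phi\otimes e)\bigr|
=\bigl|\langle\nabla^{k_1}(A(\nabla^{k_2\,\text{\rm t}}\delta_{q,e}))(p),\phi\rangle\bigr|,
\]
which, chaining the three estimates (uniform bound on $\nabla^{k_2\,\text{\rm t}}\delta_{q,e}$ in $H^{-s-k_2}$, boundedness of $A:H^{-s-k_2}\to H^{s'+k_1}$, uniform bound on evaluation with $\nabla^{k_1}$ on $H^{s'+k_1}$), yields
\[
\bigl|\bigl(\nabla^{k_1}\boxtimes\nabla^{k_2}\bigr)K_A(p,q)\bigr|
\le C_{k_1,k_2}\,\|A\|_{-s-k_2,\,s'+k_1}
\]
uniformly in $(p,q)\in M^2$, for any $s,s'>n/2$. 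This simultaneously proves $K_A\in\Cinftyub(M;F\boxtimes(E^*\otimes\Omega))$ and the continuity of $A\mapsto K_A$, since each defining seminorm of the target is bounded by a seminorm of the bounded topology on the source.

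The main obstacle is not any one estimate but the bookkeeping of bundles and densities: one has to pick coherent bounded-geometry trivializations on $E$, $F$, $F\boxtimes(E^*\otimes\Omega)$ and the various $\Omega^a$ so that the uniform constants produced by Proposition~\ref{p: Sobolev embedding with bd geometry} and by Theorem~\ref{t: mfd of bd geom} actually appear as uniform constants in the final inequality, with no hidden dependence on the base point $p$ or $q$.
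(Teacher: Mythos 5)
The paper gives no proof of its own here — it simply cites Roe's Proposition~2.9 from \cite{Roe1988I}, and your argument is a correct reconstruction of Roe's approach: reduce to $H^{-s}\to H^{s'}$ bounds, use bounded geometry to make the Sobolev embedding (and hence the $H^{-s}$ norms of Dirac-type distributions and their derivatives) uniform in the base point, and chain the three estimates. The only thing I would tighten is the step $D^{\text{\rm t}}\delta_{q,e}$: since $\delta_{q,e}\in H^{-\infty}_{\text{\rm c}}(M;E)$ and $D^{\text{\rm t}}$ for $D\in\Diffub^k(M;E)$ acts on distributional sections of $E^*\otimes\Omega$, you should either apply a bounded-geometry differential operator on $E^*\otimes\Omega$ and transpose, or directly extend $D$ to distributions; the content (that the $q$-derivatives of $K_A$ are realized by applying $A$ to uniformly $H^{-s-k}$-bounded derivative-of-delta distributions) is unaffected.
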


\begin{rem}\label{r: supp K_A subset r-penumbra <=> supp Au subset r-penumbra for all u}
Let $A\in L(H^{-\infty}(M;E),H^\infty(M;F))$ and $r>0$. Obviously,
\[
\supp K_A\subset\{\,(p,q)\in M^2\mid d(p,q)\le r\,\}
\]
if and only if, for all $u\in H^{-\infty}(M;E)$,
\[
\supp Au\subset\overline{\Pen}(\supp u,r)\;.
\]
\end{rem}

Recall that a function $\psi\in C(\R)$ is called {\em rapidly decreasing\/} if, for all $k\in\N_0$, there is some $C_k\ge0$ so that $|\psi(x)|\le C_k(1+|x|)^{-k}$. They form a Fr\'echet space denoted by $\RR=\RR(\R)$, with the best constants $C_k$ as semi-norms. If $P\in\Diffub^m(M;E)$ is uniformly elliptic and formally self-adjoint, then it is self-adjoint as an unbounded operator in the Hilbert space $L^2(M;E)$, and the functional calculus given by the spectral theorem defines a continuous linear map
\[
\RR\to L(H^{-\infty}(M;E),H^\infty(M;E))\;,\quad\psi\mapsto\psi(P)\;.
\]
Thus the linear map
\begin{equation}\label{psi mapsto K_psi(P)}
\RR\to \Cinftyub(M;E\boxtimes(E^*\otimes\Omega))\;,\quad\psi\mapsto K_{\psi(P)}\;,
\end{equation}
is continuous by Proposition~\ref{p: Schwartz kernel with bd geometry} \cite[Proposition~2.10]{Roe1988I}. 

For any closed real $\theta\in\Cinftyub(M;\Lambda^1)$ and $z\in\C$, we have the corresponding Novikov operators, $D_z\in\Diffub^1(M;\Lambda)$ and $\Delta_z\in\Diffub^2(M;\Lambda)$ (Section~\ref{ss: Novikov diff complex}), which are uniformly elliptic and formally self-adjoint; indeed $D_z$ is a generalized Dirac operator. Thus, for the symmetric hyperbolic equation
\[
\partial_t\alpha_t=iD_z\alpha_t\;,\quad\alpha_0=\alpha\;,
\]
on any open subset of $M$ and with $t$ in any interval containing $0$, any solution satisfies the finite propagation speed property \cite[Proof of Proposition~1.1]{Chernoff1973} (see also  \cite[Theorem~1.4]{CheegerGromovTaylor1982} and \cite[Proof of Proposition~7.20]{Roe1998})
\begin{equation}\label{unit propagation speed}
\supp\alpha_t\subset\Pen(\supp\alpha,|t|)\;.
\end{equation}
In particular, for the {\em Novikov wave operator\/} $e^{itD_z}$ and any $\alpha\in C^\infty(M;\Lambda)$, $\alpha_t=e^{itD_z}\alpha$ satisfies~\eqref{unit propagation speed}. On the other hand, using the expression of the inverse Fourier transform, we get
\begin{equation}\label{psi(D_z)}
\psi(D_z)=(2\pi)^{-1}\int_{-\infty}^\infty e^{i\xi D_z}\hat\psi(\xi)\,d\xi\;,
\end{equation}
According to Remark~\ref{r: supp K_A subset r-penumbra <=> supp Au subset r-penumbra for all u}, it follows from~\eqref{unit propagation speed} and~\eqref{psi(D_z)} that, for all $r>0$,
\begin{equation}\label{supp hat psi subset[-R,R] => supp K_psi(D_z) subset ...}
\supp\hat\psi\subset[-r,r]\Rightarrow\supp K_{\psi(D_z)}\subset\{\,(p,q)\in M^2\mid d(p,q)\le r\,\}\;.
\end{equation}
For $\psi\in\RR$, the operator $\psi(D_z)$ is smoothing and we may use the notation $k_z=k_{\psi,z}=K_{\psi(D_z)}$. We may also use the notation $k_{u,z}=k_{\psi_u,z}$ for any family of functions $\psi_u\in\RR$ depending on a parameter $u$. For instance, for $\psi_u(x)=e^{-ux^2}$ ($u>0$), we get the {\em Novikov heat kernel\/} $k_{u,z}=K_{e^{-u\Delta_z}}$.

\subsection{Maps of bounded geometry}\label{ss: maps of bd geom}

For $a\in\{1,2\}$, let $M_a$ be a Riemannian manifold of bounded geometry, of dimension $n_a$. Consider a normal chart $y_{a,p}:V_{a,p}\to B_a$ at every $p\in M_a$ satisfying the statement of Theorem~\ref{t: mfd of bd geom}. Let $r_a$ denote the radius of $B_a$. For $0<r\le r_a$, let $B_{a,r}\subset\R^{n_a}$ denote the ball centered at the origin with radius $r$. We have $B_a(p,r)=y_{a,p}^{-1}(B_{a,r})$.

A smooth map $\phi:M_1\to M_2$ is said to be of {\em bounded geometry\/} if, for some $0<r<r_1$ and all $p\in M_1$, we have $\phi(B_1(p,r))\subset V_{2,\phi(p)}$, and the compositions $y_{2,\phi(p)}\phi y_{1,p}^{-1}$ define a bounded family of the Fr\'echet space $C^\infty(B_{1,r},\R^{n_2})$. This condition is preserved by composition of maps. The family of smooth maps $M_1\to M_2$ of bounded geometry is denoted by $\Cinftyub(M_1,M_2)$.

For $m\in\N_0$ and $\phi\in \Cinftyub(M_1,M_2)$, using $\|\cdot\|'_{C^m_{\text{\rm ub}}}$, we easily get that $\phi^*$ induces a bounded homomorphism
\begin{equation}\label{phi^* on C_ub^m}
\phi^*:C_{\text{\rm ub}}^m(M_2;\Lambda)
\to C_{\text{\rm ub}}^m(M_1;\Lambda)\;,
\end{equation}
obtaining a continuous homomorphism
\begin{equation}\label{phi^* on C_ub^infty}
\phi^*:\Cinftyub(M_2;\Lambda)\to \Cinftyub(M_1;\Lambda)\;.
\end{equation}

On the other hand, recall that $\phi$ is called {\em uniformly metrically proper\/} if, for any $s\ge0$, there is some $t_s\ge0$ so that, for all $p,q\in M_1$,
\[
d_2(\phi(p),\phi(q)) \le s \Longrightarrow d_1(p,q) \le t_s\;.
\]
For $0<2r\le r_1,r_2$, take $\{p_{1,k}\}\subset M_1$, $\{p_{2,l}\}\subset M_2$ and $N\in\N$ satisfying the statement of Proposition~\ref{p: p_k}. Then $\phi\in C^\infty(M_1,M_2)$ is uniformly metrically proper if and only if there is some $N'\in\N$ such that every set $\phi^{-1}(B_2(p_{2,l},r))$ meets at most $N'$ sets $B_1(p_{1,k},r)$. Using $\|\cdot\|'_m$ on $H^m(M_a;\Lambda)$ (Section~\ref{ss: uniform sps}), it follows that, if $\phi$ is of bounded geometry and uniformly metrically proper, then $\phi^*$ induces a bounded homomorphism
\begin{equation}\label{phi^* on H^m}
\phi^*:H^m(M_2;\Lambda)\to H^m(M_1;\Lambda)
\end{equation}
for all $m$, obtaining induced continuous homomorphisms
\begin{equation}\label{phi^* on H^pm infty}
\phi^*:H^{\pm\infty}(M_2;\Lambda)\to H^{\pm\infty}(M_1;\Lambda)\;.
\end{equation}
If $\phi\in\Diffeo(M_1,M_2)$, and both $\phi$ and $\phi^{-1}$ are of bounded geometry, then $\phi$ is uniformly metrically proper.

\subsection{Smooth families of bounded geometry}\label{ss: families of bd geom}

Consider the notation of Sections~\ref{ss: mfds and vector bdls of bd geom}--\ref{ss: diff ops of bd geom}. Let $T$ be a manifold, and let $\pr_1:M\times T\to M$ denote the first factor projection. A section $u\in C^\infty(M\times T;\pr_1^*E)$ is called a {\em smooth family of smooth sections\/} of $E$ ({\em parametrized\/} by $T$), and we may use the notation $u=\{\,u_t\mid t\in T\,\}\subset C^\infty(M;E)$, where $u_t=u(\cdot,t)$. Its {\em $T$-support\/} is $\overline{\{\,t\in T\mid u_t\ne0\,\}}$. If the $T$-support is compact, then $u$ is said to be {\em $T$-compactly supported\/}. It is said that $u$ is {\em $T$-locally $C^\infty$-uniformly bounded\/} if any $t\in T$ is in some chart $(O,z)$ of $T$ such that the maps $u(y_p\times z)^{-1}$ define a bounded subset of the Fr\'echet space $C^\infty(B\times z(O),\C^l)$, using local trivializations of $E$ over the normal charts $(V_p,y_p)$. 

In particular, we can consider smooth families of $\C$-valued functions, tangent vector fields and sections of $C^\infty(M;F\otimes E^*)$, which can be used to define a {\em smooth family of differential operators\/}, $A=\{\,A_t\mid t\in T\,\}\subset\Diff(M;E,F)$, like in Section~\ref{ss: diff ops}. The {\em $T$-support\/} of $A$ and the property of being {\em $T$-compactly supported\/} is defined like in the case of sections. Adapting Section~\ref{ss: diff ops of bd geom}, if the smooth families of functions, tangent vector fields and sections used to describe $A$ are $T$-locally $C^\infty$-uniformly bounded, then it is said that $A$ is of {\em $T$-local bounded geometry\/}.

On the other hand, with the notation of Section~\ref{ss: maps of bd geom}, a smooth map $\phi:M_1\times T\to M_2$ is called a {\em smooth family of smooth maps\/} $M_1\to M_2$ ({\em parametrized\/} by $T$). It may be denoted by $\phi=\{\,\phi^t\mid t\in T\,\}$, where $\phi^t=\phi(\cdot,t):M_1\to M_2$. It is said that $\phi$ is of {\em $T$-local bounded geometry\/} if every $t\in T$ is in some chart $(O,z)$ of $T$ such that, for some $0<r<r_1$, we have $\phi(B_1(p,r)\times O)\subset V_{2,\phi(p)}$ for all $p\in M_1$, and the compositions $y_{2,\phi(p)}\phi(y_{1,p}\times z)^{-1}$, for $p\in M_1$, define a bounded subset of the Fr\'echet space $C^\infty(B_{1,r}\times z(O),\R^{n_2})$. The composition of smooth families of maps parametrized by $T$ has the obvious sense and preserves the $T$-local bounded geometry condition. In particular, for a flow $\phi=\{\,\phi^t\mid t\in\R\,\}=\{\phi^t\}$ on $M$, it makes sense to consider the $\R$-local bounded geometry condition. The following result complements Proposition~\ref{p: fX_ub(M) subset fX_com(M)}.

\begin{prop}\label{p: X is C^infty-uniformly bd <=> phi^t is of R-local bd geom}
Let $X\in\fXcom(M)$ with flow $\phi$. Then $X\in\fX_{\text{\rm ub}}(M)$ if and only if $\phi$ is of $\R$-local bounded geometry.
\end{prop}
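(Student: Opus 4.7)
The plan is to work in the normal charts $(V_p, y_p)$ provided by Theorem~\ref{t: mfd of bd geom} and translate $C^\infty$-uniform bounds between $X$ and its flow $\phi$ via standard ODE estimates. The key structural point is that the metric coefficients, Christoffel symbols, and transition functions $y_q y_p^{-1}$ are all $C^\infty$-bounded uniformly in $p,q$ (Theorem~\ref{t: mfd of bd geom} and Proposition~\ref{p: |partial_I(y_q y_p^-1)|}), so the only quantitative information that needs to be transported between $X$ and $\phi$ sits in the coordinate expressions in the charts $y_p$.

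For the reverse direction, assume $\phi$ is of $\R$-local bounded geometry and choose a chart $(O, z)$ of $\R$ around $0$ witnessing the condition, together with $0 < r < r_0$. By definition, the maps $y_{\phi^t(p)}\,\phi^t\, y_p^{-1}$ form a bounded subset of $C^\infty(B_{1,r}\times z(O), \R^n)$, uniformly in $p\in M$. Differentiating in the parameter $t$ at $t=0$ and reading off the $x$-derivatives recovers the coordinate components of $X$ in the chart $y_p$ together with all their derivatives; since differentiation in the parameter preserves $C^\infty$-boundedness of a family, these quantities are bounded independently of $p$. By the chart characterization of $\fX_{\text{\rm ub}}(M)$ recalled in Section~\ref{ss: uniform sps}, this gives $X \in \fX_{\text{\rm ub}}(M)$.

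For the forward direction, assume $X \in \fX_{\text{\rm ub}}(M)$. First I would treat a short interval $(-\delta, \delta)$ around $0$, and then extend to an arbitrary $t_0 \in \R$ via the group identity $\phi^t = \phi^{t-t_0}\circ \phi^{t_0}$ combined with a finite iteration of the short-time case, using that composition of smooth families of maps of $\R$-local bounded geometry preserves the condition with controlled bounds. For the short-time case, the coordinate expression of $X$ in $y_p$ is a smooth Euclidean vector field whose $C^m$ norms are bounded by $\|X\|_{C^m_{\text{\rm ub}}}$, uniformly in $p$. Since $|X|_g \le \|X\|_{C^0_{\text{\rm ub}}}$, choosing $\delta$ and $r$ small in terms of $r_0$ and $\|X\|_{C^0_{\text{\rm ub}}}$ ensures that the flow segment $\{\phi^s(q) : |s|\le\delta\}$ starting at any $q \in B(p,r)$ stays inside $V_{\phi^t(p)}$ for $|t|\le\delta$. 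Then standard smooth dependence on initial conditions (Picard iteration) yields $C^m$ bounds on the composition $y_{\phi^t(p)}\,\phi^t\,y_p^{-1}$ on $B_{1,r}\times(-\delta,\delta)$ that depend only on $\|X\|_{C^m_{\text{\rm ub}}}$ and on the bounds from Proposition~\ref{p: |partial_I(y_q y_p^-1)|} for the change of chart from $y_p$ to $y_{\phi^t(p)}$, hence independently of $p$.

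The main obstacle is the bookkeeping in the Picard estimate: one has to verify that the $C^m$ norms of the Euclidean flow, as a joint function of time and initial condition, are controlled purely by the $C^m$ norms of the Euclidean vector field expressing $X$ in $y_p$, with no hidden $p$-dependence. This is a standard but slightly technical smoothness-in-parameters argument; the uniformity in $p$ is then automatic because the family $\{y_p\}$ is a bounded family of charts and Proposition~\ref{p: |partial_I(y_q y_p^-1)|} uniformly controls the transition to the moving chart $y_{\phi^t(p)}$. Once this estimate is in place, both implications are immediate from the chart characterizations.
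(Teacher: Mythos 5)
Your proof is correct and essentially matches the paper's. The ``if'' direction is identical: differentiate the flow in the parameter at $t=0$ to recover the chart expression of $X$ and all its derivatives, using that $\R$-local $C^\infty$-boundedness passes to the $t$-derivative. For the ``only if'' direction, both arguments rest on the same Picard/Gronwall smooth-dependence estimates in the uniformly bounded normal charts, but the paper applies Hartman's dependence-on-initial-conditions theorems directly at an arbitrary $t_0$ and inducts on the order of differentiability, whereas you establish the full $C^\infty$ bound on a short interval about $0$ and then propagate it to arbitrary $t_0$ via the group law $\phi^t=\phi^{t-t_0}\circ\phi^{t_0}$ together with the stability of $T$-local bounded geometry under composition of families recorded in Section~\ref{ss: families of bd geom}. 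The two routes amount to the same chaining of the ODE estimate across charts; your version makes the time-localization explicit through the flow's semigroup property and the structural composition stability, which is arguably a cleaner way to handle long times than invoking Hartman's theorems globally.
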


\begin{proof}
The ``if'' part is obvious because $X_p=\phi_*(0_p,\frac{d}{dt}(0))$ for all $p\in M$, where $0_p$ denotes the zero element of $T_pM$. 

Let us prove the ``only if'' part. First, given $0<r<r_0$ and $t_0\in\R$, since $|X|$ is uniformly bounded on $M$, there is some $\epsilon>0$ such that $\phi^t(B(p,r))\subset B(\phi^{t_0}(p),r_0)=V_p$ for all $p\in M$ if $t\in O=(t_0-\epsilon,t_0+\epsilon)$; in particular,  the compositions $y_{\phi^{t_0}(p)}\phi(y_p\times\id_O)^{-1}$, for all $p\in M$, define a bounded subset of $C(B_r\times O,\R^n)$. Then, the argument of the proof of \cite[Theorem~V.3.1]{Hartman1964} shows that the maps $y_{\phi^{t_0}(p)}\phi(y_p\times\id_O)^{-1}$, for all $p\in M$, define a bounded subset of $C^1(B_r\times O,\R^n)$, where $B_r\subset\R^n$ is the ball centered at the origin of radius $r$. Continuing by induction on $m$, it also follows like in the proofs of \cite[Corollary~V.3.2 and Theorem~V.4.1]{Hartman1964} that the maps $y_{\phi^{t_0}(p)}\phi(y_p\times\id_O)^{-1}$, for all $p\in M$, define a bounded subset of $C^m(B_r\times O,\R^n)$ for all $m$.
\end{proof}

\section{Preliminaries on foliations}\label{s: prelims on folns}

Standard references on foliations are \cite{HectorHirsch1981-A,HectorHirsch1983-B,CamachoLinsNeto1985,Godbillon1991,CandelConlon2000-I,CandelConlon2003-II,Walczak2004}.

\subsection{Foliations}\label{ss: folns}

Recall that a ({\em smooth\/}) {\em foliation\/} $\FF$ on manifold $M$, with {\em codimension\/} $\codim\FF=n'$ and {\em dimension\/} $\dim\FF=n''$, can be described by a {\em foliated atlas\/} of $M$, which consists of charts $(U_k,x_k)$ of the smooth structure of $M$, called {\em foliated charts\/} or {\em foliated coordinates\/}, with
\begin{equation}\label{x = (x',x'')}
x_k=(x'_k,x''_k):U_k\to x_k(U_k)=\Sigma_k\times B''_k\subset\R^{n'}\times\R^{n''}\equiv\R^n\;,
\end{equation}
such that $\Sigma_k$ is open in $\R^{n'}$ and $B''_k$ is an open ball in $\R^{n''}$, and the corresponding changes of coordinates are locally of the form
\begin{equation}\label{changes of foliated coordinates}
x_lx_k^{-1}(u,v)=(h_{lk}(u),g_{lk}(u,v))\;.
\end{equation}
The notation
\[
x_k=(x^1_k,\dots,x^n_k)=(x'^1_k,\dots,x'^{n'}_k,x''^{n'+1}_k,\dots,x''^n_k)
\]
will be also used. In the case of codimension one, the notation $(x,y)=(x,y^1,\dots,y^{n-1})$ will be often used instead of $(x',x'')$. It may be also said that $(M,\FF)$ is a {\em foliated manifold\/}. The open sets $U_k$ and the projections $x'_k:U_k\to\Sigma_k$ are said to be {\em distinguished\/}, and the fibers of $x'_k$ are called {\em plaques\/}. The smooth submanifolds $x_k^{\prime\prime-1}(v)\subset U_k$ ($v\in B''_k$) are called {\em local transversals\/} defined by $(U_k,x_k)$, which can be identified with $\Sigma_k$ via $x'_k$. All possible plaques form a base of a finer topology on $M$, becoming a smooth manifold of dimension $n''$ with the obviously induced charts whose connected components are called {\em leaves\/}. The leaf through any point $p$ may be denoted by $L_p$. Foliations on manifolds with boundary are similarly defined, where the boundary is tangent or transverse to the leaves. The {\em $\FF$-saturation\/} of a subset $S\subset M$, denoted by $\FF(S)$, is the union of leaves that meet $S$.

If a smooth map $\phi:M'\to M$ is transverse to (the leaves of) $\FF$, then the connected components of the inverse images $\phi^{-1}(L)$ of the leaves $L$ of $\FF$ are the leaves of a smooth foliation $\phi^*\FF$ on $M'$ of codimension $n'$, called \emph{pull-back} of $\FF$ by $\phi$. In particular, for the inclusion map of any open subset, $\iota:U\hookrightarrow M$, the pull-back $\iota^*\FF$ is the \emph{restriction} $\FF|_U$, which can be defined by the charts of $\FF$ with domain in $U$.

Given foliations $\FF_a$ on manifolds $M_a$ ($a=1,2$), the products of leaves of $\FF_1$ and $\FF_2$ are the leaves of the {\em product foliation\/} $\FF_1\times\FF_2$, whose charts can be defined using products of charts of $\FF_1$ and $\FF_2$. Any connected manifold $M'$ can be considered as a foliation with one leaf. We can also consider the foliation by points on $M'$, denoted by $M'_{\text{\rm pt}}$. Thus we get the foliations $\FF\times M'$ and $\FF\times M'_{\text{\rm pt}}$ on $M\times M'$.

\subsection{Holonomy}\label{ss: holonomy}

After considering a possible refinement, we can assume that the foliated atlas $\{U_k,x_k\}$ is {\em regular\/} in the following sense: it is locally finite; for every $k$, there is a foliated chart $(\widetilde U_k,\tilde x_k)$ such that $\overline{U_k}\subset\widetilde U_k$ and $\tilde x_k$ extends $x_k$; and, if $U_{kl}:=U_k\cap U_l\ne\emptyset$, then there is another foliated chart $(U,x)$ such that $\overline{U_k}\cup\overline{U_l}\subset U$. In this case,~\eqref{changes of foliated coordinates} holds on the whole of $U_{kl}$, obtaining diffeomorphisms $h_{kl}:x'_l(U_{kl})\to x'_k(U_{kl})$ determined by the condition $h_{kl}x'_l=x'_k$ on $U_{kl}$, called {\em elementary holonomy transformations\/}. The collection $\{U_k,x'_k,h_{kl}\}$ is called a {\em defining cocycle\/}. The elementary holonomy transformations $h_{kl}$ generate the so called {\em holonomy pseudogroup\/} $\HH$ on $\Sigma:=\bigsqcup_k\Sigma_k$, which is unique up to certain {\em equivalence\/} of pseudogroups \cite{Haefliger1980}. The $\HH$-orbit of every $\bar p\in\Sigma$ is denoted by $\HH(\bar p)$. The maps $x'_k$ define a homeomorphism between the leaf space, $M/\FF$, and the orbit space, $\Sigma/\HH$.

The paths in the leaves are called {\em leafwise paths\/} when considered in $M$. Let $c:I:=[0,1]\to M$ be a leafwise path with $p:=c(0)\in U_k$ and $q:=c(1)\in U_l$, and let $\bar p=x'_k(p)\in\Sigma_k$ and $\bar q=x'_k(q)\in\Sigma_l$. There is a partition of $I$, $0=t_0<t_1<\dots<t_m=1$, and a sequence of indices, $k=k_1,k_2,\dots,k_m=l$, such that $c([t_{i-1},t_i])\subset U_{k_i}$ for $i=1,\dots,m$. The composition $h_c=h_{k_mk_{m-1}}\cdots h_{k_2k_1}$ is a diffeomorphism with $\bar p\in\dom h_c\subset\Sigma_k$ and $\bar q=h_c(\bar p)\in\im h_c\subset\Sigma_l$. The tangent map $h_{c*}:T_{\bar p}\Sigma_k\to T_{\bar q}\Sigma_l$ is called {\em infinitesimal holonomy\/} of $c$. The germ $\bfh_c$ of $h_c$ at $\bar p$, called {\em germinal holonomy\/} of $c$, depends only on $\FF$ and the end-point homotopy class of $c$ in $L:=L_p$.

\subsection{Infinitesimal transformations and transverse vector fields}\label{ss: infinitesimal transfs}

The vectors tangent to the leaves form the {\em tangent bundle\/} $T\FF\subset TM$, obtaining also the {\em normal bundle\/} $N\FF=TM/T\FF$, the  {\em cotangent bundle\/} $T^*\FF=(T\FF)^*$ and the {\em conormal bundle\/} $N^*\FF=(N\FF)^*$, the {\em tangent/normal density bundles\/}, $\Omega^a\FF=\Omega^aT\FF$ ($a\in\R$) and $\Omega^aN\FF$ (removing ``$a$'' from the notation when it is $1$), and the {\em tangent/normal exterior bundles\/}, $\Lambda\FF=\bigwedge T^*\FF$ and $\Lambda N\FF=\bigwedge N^*\FF$. The complex versions of these vector bundles are taken, unless it is explicitly indicated that the real versions are considered. The terms {\em tangent/normal\/} vector fields, densities and differential forms are used for their smooth sections. Sometimes, the terms ``{\em leafwise\/}'' or ``{\em vertical\/}'' are used instead of ``tangent''. By composition with the canonical projection $TM\to N\FF$, any $X$ in $TM$ or $\fX(M)$ defines an element of $N\FF$ or $C^\infty(M;N\FF)$ denoted by $\overline{X}$. For any smooth local transversal $\Sigma$ of $\FF$ through a point $p\in M$, there is a canonical isomorphism $T_p\Sigma\cong N_p\FF$.

A smooth vector bundle $E$ over $M$, endowed with a flat $T\FF$-partial connection, is said to be {\em$\FF$-flat\/}. For instance, $N\FF$ is $\FF$-flat with the $T\FF$-partial connection $\nabla^\FF$ given by $\nabla^\FF_V\overline{X}=\overline{[V,X]}$ for $V\in\fX(\FF):=C^\infty(M;T\FF)$ and $X\in\fX(M)$. For every leafwise path $c$ from $p$ to $q$, its infinitesimal holonomy can be considered as a homomorphism $h_{c*}:N_p\FF\to N_q\FF$, which equals the $\nabla^\FF$-parallel transport along $c$.

$\fX(\FF)$ is a Lie subalgebra and $C^\infty(M)$-submodule of $\fX(M)$, whose normalizer is denoted by $\fX(M,\FF)$, obtaining the quotient Lie algebra $\olfX(M,\FF)=\fX(M,\FF)/\fX(\FF)$. The elements of $\fX(M,\FF)$ (respectively, $\olfX(M,\FF)$) are called {\em infinitesimal transformations\/} (respectively, {\em transverse vector fields\/}) of $(M,\FF)$. The projection of every $X\in\fX(M,\FF)$ to $\olfX(M,\FF)$ is also denoted by $\overline{X}$; in fact, $\olfX(M,\FF)$ can be identified with the linear subspace of $C^\infty(M;N\FF)$ consisting of the $\nabla^\FF$-parallel normal vector fields (those that are invariant by infinitesimal holonomy). Any $X\in\fX(M)$ is in $\fX(M,\FF)$ if and only if every restriction $X|_{U_k}$ can be projected by $x'_k$, defining an $\HH$-invariant vector field on $\Sigma$, also denoted by $\overline X$. This induces a canonical isomorphism of $\olfX(M,\FF)$ to the Lie algebra $\fX(\Sigma,\HH)$ of $\HH$-invariant tangent vector fields on $\Sigma$.

When $M$ is not closed, we can consider the subsets of complete vector fields, $\fXcom(\FF)\subset\fX(\FF)$ and $\fXcom(M,\FF)\subset\fX(M,\FF)$. Let $\olfXcom(M,\FF)\subset\olfX(M,\FF)$ be the projection of $\fXcom(M,\FF)$.

\subsection{Holonomy groupoid}\label{ss: holonomy groupoid}

On the space of leafwise paths in $M$, with the compact-open topology, two leafwise paths are declared to be equivalent if they have the same end points and the same germinal holonomy. This is an equivalence relation, and the corresponding quotient space, $\fG=\Hol(M,\FF)$, becomes a smooth manifold of dimension $n+n''$ in the following way. An open neighborhood $\fU$ of a class $[c]$ in $\fG$, with $c(0)\in U_k$ and $c(1)\in U_l$, is defined by the leafwise paths $d$ such that $d(0)\in U_k$, $d(1)\in U_l$, $x'_kd(0)\in\dom h_c$, and $h_d$ and $h_c$ have the same germ at $x'_kd(0)$. Local coordinates on $\fU$ are given by $[d]\mapsto(x_kd(0),x''_ld(1))$. Moreover $\fG$ is a Lie groupoid, called the {\em holonomy groupoid\/}, where the space of units $\fG^{(0)}\equiv M$ is defined by the constant paths, the source and range projections $\bfs,\bfr:\fG\to M$ are given by the first and last points of the paths, and the operation is induced by the opposite of the usual path product\footnote{A product of leafwise paths, $c_2\cdot c_1$, is defined if $c_1(1)=c_2(0)$, and it is equal to $c_1$ followed by $c_2$, reparametrized in the usual way.}. Note that $\fG$ is Hausdorff if and only if $\HH$ is {\em quasi-analytic\/} in the sense that, for any $h\in\HH$ and every open $O\subset\Sigma$ with $\overline O\subset\dom h$, if $h|_O=\id_O$, then $h$ is the identity on some neighborhood of $\overline O$. Observe also that $\bfs,\bfr:\fG\to M$ are smooth submersions, and $(\bfr,\bfs):\fG\to M^2$ is a smooth immersion. Let $\RR_\FF=\{\,(p,q)\in M^2\mid L_p=L_q\,\}\subset M^2$, which is not a regular submanifold in general, and let $\Delta\subset M^2$ be the diagonal. We have $(\bfr,\bfs)(\fG)=\RR_\FF$ and $(\bfr,\bfs)(\fG^{(0)})=\Delta$. For any leaf $L$ and $p\in L$, we have $\Hol(L,p)=\bfs^{-1}(p)\cap\bfr^{-1}(p)$, the map $\bfr:\bfs^{-1}(p)\to L$ is the covering projection $\widetilde L^{\text{\rm hol}}\to L$, and $\bfs:\bfr^{-1}(p)\to L$ corresponds to $\bfr:\bfs^{-1}(p)\to L$ by the inversion of $\fG$. Thus $(\bfr,\bfs):\fG\to M^2$ is injective if and only if all leaves have trivial holonomy groups, but, even in this case, it may not be a topological embedding. The fibers of $\bfs$ and $\bfr$ define smooth foliations of codimension $n$ on $\fG$. We also have the smooth foliation $\bfs^*\FF=\bfr^*\FF$ of codimension $n'$ with leaves $\bfs^{-1}(L)=\bfr^{-1}(L)=(\bfr,\bfs)^{-1}(L^2)$ for leaves $L$ of $\FF$, and every restriction $(\bfr,\bfs):(\bfr,\bfs)^{-1}(L^2)\to L^2$ is a smooth covering projection.

Let $\FF_k=\FF|_{U_k}$, $\fG_k=\Hol(U_k,\FF_k)$ and $\RR_k=\RR_{\FF_k}$. Then $\bigcup_k\fG_k$ (respectively, $\bigcup_k\RR_k$) is an open neighborhood of $\fG^{(0)}$ in $\fG$ (respectively, of $\Delta$ in $\RR_\FF$). Furthermore, by the regularity of $\{U_k,x_k\}$, the map $(\bfr,\bfs):\bigcup_k\fG_k\to M^2$ is a smooth embedding with image $\bigcup_k\RR_k$; we will write $\bigcup_k\fG_k\equiv\bigcup_k\RR_k$.

\subsection{The convolution algebra on $\fG$ and its global action}\label{ss: global action}

Consider the notation of Section~\ref{ss: holonomy groupoid}. For the sake of simplicity, assume that $\fG$ is Hausdorff. The extension of the following concepts to the case where $\fG$ is not Hausdorff can be made like in \cite{Connes1979}.

Given a vector bundle $E$ over $M$, consider the vector bundle $S=\bfr^*E\otimes \bfs^*(E^*\otimes\Omega\FF)$ over $\fG$. Let $C^\infty_{\text{\rm cs}}(\fG;S)\subset C^\infty(\fG;S)$ denote the subspace of sections $k\in C^\infty(\fG;S)$ such that $\supp k\cap\bfs^{-1}(K)$ is compact for all compact $K\subset M$; in particular, $C^\infty_{\text{\rm cs}}(\fG;S)=\Cinftyc(\fG;S)$ if $M$ is compact. Similarly, define $C^\infty_{\text{\rm cr}}(\fG;S)$ by using $\bfr$ instead of $\bfs$. Both $C^\infty_{\text{\rm cs}}(\fG;S)$ and $C^\infty_{\text{\rm cr}}(\fG;S)$ are associative algebras with the {\em convolution\/} product defined by
\begin{align*}
(k_1*k_2)(\gamma)&=\int_{\delta\epsilon=\gamma}k_1(\delta)\,k_2(\epsilon)\\
&=\int_{\bfs(\epsilon)=\bfs(\gamma)}k_1(\gamma\epsilon^{-1})\,k_2(\epsilon)
=\int_{\bfr(\delta)=\bfr(\gamma)}k_1(\delta)\,k_2(\delta^{-1}\gamma)\;,
\end{align*}
and $C^\infty_{\text{\rm csr}}(\fG;S):=C^\infty_{\text{\rm cs}}(\fG;S)\cap C^\infty_{\text{\rm cr}}(\fG;S)$ and $\Cinftyc(\fG;S)$ are subalgebras. 

There is a {\em global action\/} of $C^\infty_{\text{\rm cs}}(\fG;S)$ on $C^\infty(M;E)$ defined by
\[
(k\cdot u)(p)=\int_{\bfr(\gamma)=p}k(\gamma)\,u(\bfs(\gamma))\;.
\]
In this way, $C^\infty_{\text{\rm cs}}(\fG;S)$ can be understood as an algebra of operators on $C^\infty(M;E)$. Moreover $C^\infty_{\text{\rm csr}}(\fG;S)$ preserves $\Cinftyc(M;E)$, obtaining an algebra of operators on $\Cinftyc(M;E)$. It can be said that these operators are defined by a leafwise version of the Schwartz kernel (cf.\ Section~\ref{ss: ops}). 

Let $S'$ be defined like $S$ with $E^*\otimes\Omega\FF$ instead of $E$. Then the mapping $k\mapsto k^{\text{\rm t}}$, where $k^{\text{\rm t}}(\gamma)=k(\gamma^{-1})$, defines anti-homomorphisms $C^\infty_{\text{\rm cs/cr}}(\fG;S)\to C^\infty_{\text{\rm cr/cs}}(\fG;S')$ and $C^\infty_{\text{\rm csr}}(\fG;S)\to C^\infty_{\text{\rm csr}}(\fG;S')$, obtaining a leafwise version of the transposition of operators (cf.\ Section~\ref{ss: ops}). Similarly, using $E=\Omega^{1/2}\FF$, or if $E$ has a Hermitian structure and we fix a non-vanishing leafwise density, we get a leafwise version of the adjointness of operators.

\subsection{Leafwise distance}\label{ss: leafwise distance}

Assume that $M$ is a Riemannian manifold, and consider the induced Riemannian metric on the leaves. The \emph{leafwise distance} is the map $d_\FF:M^2\to[0,\infty]$ given by the distance function of the leaves on $\RR_\FF$, and with $d_\FF(M^2\sm\RR_\FF)=\infty$. Note that $d_\FF\ge d_M$. Given $p\in M$, $S\subset M$ and $r>0$, the {\em open\/} and {\em closed leafwise balls\/}, $B_\FF(p,r)$ and $\overline B_\FF(p,r)$, and the {\em open\/} and {\em closed leafwise penumbras\/}, $\Pen_\FF(S,r)$ and $\overline{\Pen}_\FF(S,r)$, are defined with $d_\FF$ like in the case of Riemannian metrics (see Section~\ref{s: bd geom}).

Equip $\fG$ with the Riemannian structure so that the smooth immersion $(\bfr,\bfs):\fG\to M^2$ is isometric. Let $d_\bfr:\fG\to[0,\infty]$ denote the leafwise distance for the foliation on $\fG$ defined by the fibers of $\bfr$, and consider the corresponding open and closed leafwise penumbras, $\Pen_\bfr(\fG^{(0)},r)$ and $\overline{\Pen}_\bfr(\fG^{(0)},r)$. Note that we get the same penumbras by using $\bfs$ instead of $\bfr$; indeed, they are defined by the conditions $d_\FF^{\text{\rm hol}}<r$ and $d_\FF^{\text{\rm hol}}\le r$, respectively, where $d_\FF^{\text{\rm hol}}:\fG\to[0,\infty)$ is defined by $d_\FF^{\text{\rm hol}}(\gamma)=\inf_c\length(c)$, with $c$ running in the piecewise smooth representatives of $\gamma$.

We have $d_\FF^{\text{\rm hol}}\equiv d_\FF$ on $\bigcup_k\fG_k\equiv\bigcup_k\RR_k$. Using the convexity radius (see e.g.\ \cite[Section~6.3.2]{Petersen1998}), it follows that, after refining $\{U_k,x_k\}$ if necessary, we can assume $d_\FF$ is continuous on $\bigcup_k\RR_k$.

From now on, suppose that the leaves are complete Riemannian manifolds. Then the exponential maps of the leaves define a smooth map, $\exp_\FF:T\FF\to M$, on the real tangent bundle of $\FF$.

\begin{lem}\label{l: overline Pen_FF(Q,r) is compact}
For all compact $Q\subset M$ and $r>0$, $\Pen_\FF(Q,r)$ is relatively compact in $M$, and $\Pen_\bfr(\fG^{(0)},r)\cap\bfs^{-1}(Q)$ and $\Pen_\bfs(\fG^{(0)},r)\cap\bfr^{-1}(Q)$ are relatively compact in $\fG$.
\end{lem}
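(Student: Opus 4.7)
The strategy is to exploit completeness of the leaves together with the fact that $\bfs\colon\bfr^{-1}(p)\to L_p$ and $\bfr\colon\bfs^{-1}(p)\to L_p$ are Riemannian coverings. Completeness then passes to the fibers of $\bfs$ and $\bfr$, so the exponential maps along leaves and along fibers are defined on the whole tangent bundle, and the desired penumbras become continuous images of compact sets in $T\FF$.

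\textbf{Part (1).} Since every leaf $L$ of $\FF$ is complete, $\exp_\FF\colon T\FF\to M$ is defined on all of $T\FF$, and is smooth. If $p\in\Pen_\FF(Q,r)$, there exists $q\in Q$ with $d_\FF(p,q)<r$, and the leaf $L_q$ is complete, so $p=\exp_q(v)$ for some real $v\in T_q\FF$ with $|v|=d_\FF(p,q)<r$. Thus
\[
\Pen_\FF(Q,r)\subset\exp_\FF\bigl(\{\,v\in T\FF\mid\pi(v)\in Q,\ |v|\le r\,\}\bigr),
\]
where $\pi\colon T\FF\to M$ is the projection. The set on the right is the image under the continuous map $\exp_\FF$ of the closed radius-$r$ ball bundle of $T\FF$ over the compact set $Q$, hence is compact. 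Therefore $\overline{\Pen_\FF(Q,r)}$ is compact.

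\textbf{Part (2).} Consider the foliation of $\fG$ by the fibers of $\bfr$. Because $(\bfr,\bfs)\colon\fG\to M^2$ is an isometric immersion, a vector $v\in\ker\bfr_*$ has length $|\bfs_*v|$, so $\bfs\colon\bfr^{-1}(p)\to L_p$ is a local isometry; combined with the fact that it is a covering projection $\widetilde L_p^{\mathrm{hol}}\to L_p$, this is a Riemannian covering, and $\bfr^{-1}(p)$ inherits completeness from $L_p$. Along the unit section, $\bfs_*$ restricts to an isomorphism $\ker\bfr_*|_{1_p}\to T_p\FF$, so we obtain a smooth map $\widetilde\exp_\bfs\colon T\FF\to\fG$, $v\mapsto\gamma(1)$, where $\gamma$ is the unique geodesic in $\bfr^{-1}(\pi(v))$ with $\gamma(0)=1_{\pi(v)}$ projecting under $\bfs$ to $\exp_{\pi(v)}(tv)$. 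By construction $\bfr\,\widetilde\exp_\bfs(v)=\pi(v)$ and $\bfs\,\widetilde\exp_\bfs(v)=\exp_{\pi(v)}(v)$, and, since $\bfs$ is distance non-increasing on fibers of $\bfr$,
\[
d_\FF\bigl(\bfr(\gamma),\bfs(\gamma)\bigr)\le d_\bfr\bigl(\gamma,\fG^{(0)}\bigr)
\]
for every $\gamma\in\fG$, because $\fG^{(0)}\cap\bfr^{-1}(\bfr(\gamma))=\{1_{\bfr(\gamma)}\}$ and the projection under $\bfs$ of a shortest curve from $1_{\bfr(\gamma)}$ to $\gamma$ in the fiber joins $\bfr(\gamma)$ to $\bfs(\gamma)$ in $L_{\bfr(\gamma)}$.

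\textbf{Continuation of Part (2).} Fix $\gamma\in\Pen_\bfr(\fG^{(0)},r)\cap\bfs^{-1}(Q)$. Completeness of the fiber $\bfr^{-1}(\bfr(\gamma))$ furnishes a minimizing geodesic from $1_{\bfr(\gamma)}$ to $\gamma$ of some length $s<r$; writing $v\in T_{\bfr(\gamma)}\FF$ for the $\bfs_*$-image of its initial vector, we have $|v|=s<r$ and $\gamma=\widetilde\exp_\bfs(v)$. The displayed inequality gives $\bfr(\gamma)\in\overline{\Pen}_\FF(Q,r)$, and by Part~(1) (applied with $r+1$ in place of $r$) the set $Q':=\overline{\Pen}_\FF(Q,r)$ is compact. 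Therefore
\[
\Pen_\bfr(\fG^{(0)},r)\cap\bfs^{-1}(Q)\subset\widetilde\exp_\bfs\bigl(\{\,v\in T\FF\mid\pi(v)\in Q',\ |v|\le r\,\}\bigr),
\]
and the right-hand side is compact. The statement for $\Pen_\bfs(\fG^{(0)},r)\cap\bfr^{-1}(Q)$ follows either by the symmetric construction $\widetilde\exp_\bfr\colon T\FF\to\fG$ or by applying the case just proved to the inverse of $\fG$, which interchanges $\bfr$ and $\bfs$.

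The only delicate point is the comparison of the fiberwise distance $d_\bfr(\gamma,\fG^{(0)})$ with the leafwise distance $d_\FF(\bfr(\gamma),\bfs(\gamma))$; once this is secured through the covering property of $\bfs|_{\bfr^{-1}(p)}$, the rest is formally the same argument as in Part~(1).
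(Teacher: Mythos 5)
Your proof is correct and uses essentially the paper's strategy: show that a continuous, leafwise-exponential-type map $T\FF\to\fG$ sends a compact closed-ball bundle of $T\FF$ over a compact base onto the target set, exactly as $\exp_\FF$ does in Part~(1). The one deviation is that your $\widetilde\exp_\bfs$ is the groupoid inverse of the paper's map $\sigma(v)=[\,t\mapsto\exp_\FF(tv)\,]$, so it parametrizes classes by their $\bfr$-endpoint $\bfr(\gamma)$ rather than their $\bfs$-endpoint, which forces the detour through $Q'=\overline{\Pen}_\FF(Q,r)$; the paper's $\sigma$ sends $E_Q$ directly onto $\overline{\Pen}_\bfr(\fG^{(0)},r)\cap\bfs^{-1}(Q)$ (the other set being its image under groupoid inversion), and the map you call $\widetilde\exp_\bfr$ is precisely this $\sigma$, so starting from it would have avoided the enlargement of $Q$.
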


\begin{proof}
The set $E=\{\,v\in T\FF\mid\|v\|\le r\,\}$ is a subbundle of $T\FF$ with compact typical fiber, $\overline B_{\R^{n''}}(0,r)$. So its restriction $E_Q$ is compact, obtaining that $\exp_\FF(E_Q)=\overline{\Pen}_\FF(Q,r)$ is compact. 

For every $v\in T\FF$, let $c_v:I\to M$ denote the leafwise path defined by $c_v(t)=\exp_\FF(tv)$. A smooth map $\sigma:T\FF\to\fG$ is defined by $\sigma(v)=[c_v]$. We get that $\sigma(E_Q)=\overline{\Pen}_\FF(\fG^{(0)},r)\cap\bfs^{-1}(Q)$ is compact, as well as $\overline{\Pen}_\bfs(\fG^{(0)},r)\cap\bfr^{-1}(Q)=(\overline{\Pen}_\bfr(\fG^{(0)},r)\cap\bfs^{-1}(Q))^{-1}$.
\end{proof}

With the notation of Section~\ref{ss: global action}, let $C^\infty_{\text{\rm p}}(\fG;S)\subset C^\infty(\fG;S)$ denote the subspace of sections supported in leafwise penumbras of $\fG^{(0)}$. By Lemma~\ref{l: overline Pen_FF(Q,r) is compact}, this is a subalgebra of $C^\infty_{\text{\rm csr}}(\fG;S)$, and the leafwise transposition restricts to an anti-homomorphism $C^\infty_{\text{\rm p}}(\fG;S)\to C^\infty_{\text{\rm p}}(\fG;S')$.

\subsection{Foliated maps and foliated flows}\label{ss: fol maps}

A {\em foliated map\/} $\phi:(M_1,\FF_1)\to(M_2,\FF_2)$ is a map $\phi:M_1\to M_2$ that maps leaves of $\FF_1$ to leaves of $\FF_2$. In this case, assuming that $\phi$ is smooth, its tangent map defines morphisms $\phi_*:T\FF_1\to T\FF_2$ and $\phi_*:N\FF_1\to N\FF_2$, where the second one is compatible with the corresponding flat partial connections. We also get an induced Lie groupoid homomorphism $\Hol(\phi):\Hol(M_1,\FF_1)\to\Hol(M_2,\FF_2)$, defined by $\Hol(\phi)([c])=[\phi c]$. The set of smooth foliated maps $(M_1,\FF_1)\to(M_2,\FF_2)$ is denoted by $C^\infty(M_1,\FF_1;M_2,\FF_2)$.  A smooth family $\phi=\{\,\phi^t\mid t\in T\,\}$ of foliated maps $(M_1,\FF_1)\to(M_2,\FF_2)$ can be considered as the smooth foliated map $\phi:(M_1\times T,\FF_1\times T_{\text{\rm pt}})\to(M_2,\FF_2)$.

For example, given another manifold $M'$, if a smooth map $\psi:M'\to M$ is transverse to $\FF$, then it is a foliated map $(M',\psi^*\FF)\to(M,\FF)$.

Let $\Diffeo(M,\FF)$ be the group of foliated diffeomorphisms (or transformations) of $(M,\FF)$. A smooth flow $\phi=\{\phi^t\}$ on $M$ is called {\em foliated\/} if $\phi^t\in\Diffeo(M,\FF)$ for all $t\in\R$. More generally, a local flow $\phi:\Omega\to M$, defined on some open neighborhood $\Omega$ of $M\times\{0\}$ in $M\times\R$, is called {\em foliated\/} if it is a foliated map $(\Omega,(\FF\times\R_{\text{\rm pt}})|_\Omega)\to(M,\FF)$. Then $\fX(M,\FF)$ consists of the smooth vector fields whose local flow is foliated, and $\fXcom(M,\FF)$ consists of the complete smooth vector fields whose flow is foliated.

Let $X\in\fXcom(M,\FF)$, with foliated flow $\phi=\{\phi^t\}$. With the notation of Sections~\ref{ss: holonomy} and~\ref{ss: infinitesimal transfs}, let $\bar\phi$ be the local flow on $\Sigma$ generated by $\overline X\in\fX(\Sigma,\HH)$. Since $X|_{U_k}$ corresponds to $\overline X|_{\Sigma_k}$ via $x'_k:U_k\to\Sigma_k$, the local flow defined by $\phi$ on every $U_k$ also corresponds to the restriction of $\bar\phi$ to $\Sigma_k$. Hence $\bar\phi$ is $\HH$-equivariant in an obvious sense.

\subsection{Differential operators on foliated manifolds}\label{ss: diff ops on fol mfds}

Like in Section~\ref{ss: diff ops}, using $\fX(\FF)$ instead of $\fX(M)$, we get the filtered $C^\infty(M)$-submodule and subalgebra of {\em leafwise differential operators\/}, $\Diff(\FF)\subset\Diff(M)$, and a {\em leafwise principal symbol\/} surjection for every order $m$,
\[
\Fsigma_m:\Diff^m(\FF)\to P^{[m]}(T^*\FF)\to0\;,
\]
whose kernel is $\Diff^{m-1}(\FF)$. Moreover these concepts can be extended to vector bundles $E$ and $F$ over $M$ like in Section~\ref{ss: diff ops}, obtaining the filtered $C^\infty(M)$-submodule $\Diff(\FF;E,F)$ (or $\Diff(\FF;E)$ if $E=F$) of $\Diff(M;E,F)$, and the {\em leafwise principal symbol\/} surjection
\[
\Fsigma_m:\Diff^m(\FF;E,F)\to P^{[m]}(T^*\FF;F\otimes E^*)\;,
\]
whose kernel is $\Diff^{m-1}(\FF;E,F)$. The diagram
\begin{equation}\label{CD, Fsigma_m, sigma_m}
\begin{CD}
\Diff^m(\FF;E,F) @>{\Fsigma_m}>> P^{[m]}(T^*\FF;F\otimes E^*) \\
@VVV @VVV \\
\Diff^m(M;E,F) @>{\sigma_m}>> P^{[m]}(T^*M;F\otimes E^*)
\end{CD}
\end{equation}
is commutative, where the left-hand side vertical arrow denotes the inclusion homomorphism, and the right-hand side vertical arrow is defined by the restriction morphism $T^*M\to T^*\FF$. The condition of being a leafwise differential operator is preserved by compositions and by taking transposes, and by taking formal adjoints in the case of Hermitian vector bundles; in particular, $\Diff(\FF;E)$ is a filtered subalgebra of $\Diff(M;E)$.  It is said that $A\in\Diff^m(\FF;E,F)$ is {\em leafwisely elliptic\/} if the leafwise symbol $\Fsigma_m(A)(p,\xi)$ is an isomorphism for all $p\in M$ and $0\ne\xi\in T^*_p\FF$.

A smooth family of leafwise differential operators, $A=\{\,A_t\mid t\in T\,\}$ with $A_t\in\Diff^m(\FF;E,F)$, can be canonically considered as a leafwise differential operator $A\in\Diff^m(\FF\times T_{\text{pt}};\pr_1^*E,\pr_1^*F)$, where $\pr_1:M\times T\to M$ is the first factor projection.

On the other hand, using the canonical injection $N^*\FF\subset T^*M$, it is said that $A\in\Diff^m(M;E,F)$ is {\em transversely elliptic\/} if the symbol $\sigma_m(A)(p,\xi)$ is an isomorphism for all $p\in M$ and $0\ne\xi\in N^*_p\FF$.

\subsection{Riemannian foliations}\label{ss: Riem folns}

The $\HH$-invariant structures on $\Sigma$ are called ({\em invariant\/}) {\em transverse structures\/}. For instance, we will use the concepts of a {\em transverse orientation\/}, a {\em transverse Riemannian metric\/}, and a {\em transverse parallelism\/}. The existence of these transverse structures defines the classes of {\em transversely orientable\/}, ({\em transversely\/}) {\em Riemannian\/}, and {\em transversely parallelizable\/} ({\em TP\/}) foliations. If a transverse parallelism of $\FF$ is a base of a Lie subalgebra $\fg\subset\fX(\Sigma,\HH)$, it gives rise to the concepts of {\em transverse Lie structure\/} and ($\fg$-){\em Lie foliation\/}. If $G$ is the simply connected Lie group with Lie algebra $\fg$, then $\FF$ is a $\fg$-Lie foliation just when $\HH$ is equivalent to some pseudogroup on $G$ generated by some left translations. 

By using the canonical isomorphism $\olfX(M,\FF)\cong\fX(\Sigma,\HH)$, the condition on $\FF$ to be $TP$ means that there is a global frame of $N\FF$ consisting of transverse vector fields $\overline{X_1},\dots,\overline{X_{n'}}$, also called a {\em transverse parallelism\/}; and the condition on $\FF$ to be a $\fg$-Lie foliation means that moreover $\overline{X_1},\dots,\overline{X_{n'}}$ form a base of a Lie subalgebra of $\fg\subset\olfX(M,\FF)$. With this point of view, if moreover $\overline{X_1},\dots,\overline{X_{n'}}\in\olfXcom(M,\FF)$, then the TP or Lie foliation $\FF$ is called {\em complete\/}. 

Similarly, a transverse Riemannian metric can be described as a Euclidean structure on $N\FF$ that is invariant by infinitesimal holonomy. In turn, this is induced by a Riemannian metric on $M$ such that every $x'_k:U_k\to\Sigma_k$ is a Riemannian submersion, called {\em bundle-like metric\/}. Thus $\FF$ is Riemannian if and only if there is a bundle-like metric on $M$. 

It is said that $\FF$ is {\em transitive at\/} a point $p\in M$ when the evaluation map $\ev_p:\fX(M,\FF)\to T_pM$ is surjective, or, equivalently, the evaluation map $\overline{\ev}_p:\olfX(M,\FF)\subset C^\infty(M;N\FF)\to N_p\FF$ is surjective. The transitive point set is open and saturated. The foliation $\FF$ is called {\em transitive\/} if it is transitive at every point. It is said that $\FF$ is {\em transversely complete\/} ({\em TC\/}) if $\ev_p(\fXcom(M,\FF))$ generates $T_pM$ for all $p\in M$. Since the evaluation map $\fXcom(\FF)\to T_p\FF$ is surjective \cite[Section~4.5]{Molino1988}, $\FF$ is TC if and only if $\overline{\ev}_p(\olfXcom(M,\FF))$ generates $N_p\FF$ for all $p\in M$. 

All TP foliations are transitive, and all transitive foliations are Riemannian. On the other hand, Molino's theory \cite{Molino1988} describes Riemannian foliations in terms of TP foliations. A Riemannian foliation is called {\em complete\/} if, using Molino's theory, the corresponding TP foliation is TC. Furthermore Molino's theory describes TC foliations in terms of complete Lie foliations with dense leaves. In turn, complete Lie foliations have the following description due to Fedida \cite{Fedida1971,Fedida1973} (see also \cite[Theorem~4.1 and Lemma~4.5]{Molino1988}). Assume that $M$ is connected and $\FF$ a complete $\fg$-Lie foliation. Let $G$ be the simply connected Lie group whose Lie algebra (of left-invariant vector fields) is (isomorphic to) $\fg$. Then there is a regular covering space, $\pi:\widetilde M\to M$, a fiber bundle $D:\widetilde M\to G$ (the {\em developing\/} map) and a monomorphism $h:\Gamma:=\Aut(\pi)\equiv\pi_1L/\pi_1\widetilde L\to G$ (the {\em holonomy\/} homomorphism) such that the leaves of $\widetilde\FF:=\pi^*\FF$ are the fibers of $D$, and $D$ is $h$-equivariant with respect to the left action of $G$ on itself by left translations. As a consequence, $\pi$ restricts to diffeomorphisms between the leaves of $\widetilde\FF$ and $\FF$. The subgroup $\Hol\FF=\im h\subset G$, isomorphic to $\Gamma$, is called the {\em global holonomy group\/}. The $\widetilde\FF$-leaf through every $\tilde p\in\widetilde M$ will be denoted by $\widetilde L_{\tilde p}$. Since $D$ induces an identity $\widetilde M/\widetilde\FF\equiv G$, the $\pi$-lift and $D$-projection of vector fields define identities\footnote{Given an action, the group is added to the notation of a space of vector fields to indicate the subspace of invariant elements.}
\begin{equation}\label{overline fX(M, FF) equiv ... equiv fX(G, Hol FF)}
\olfX(M,\FF)\equiv\olfX(\widetilde M,\widetilde\FF,\Gamma)\equiv\fX(G,\Hol\FF)\;.
\end{equation}
These identities give a precise realization of $\fg\subset\olfX(M,\FF)$ as the Lie algebra of left invariant vector fields on $G$. The holonomy pseudogroup of $\FF$ is equivalent to the pseudogroup on $G$ generated by the action of $\Hol\FF$ by left translations. Thus the leaves are dense if and only if $\Hol\FF$ is dense in $G$, which means $\fg=\olfX(M,\FF)$.

\subsection{Differential forms on foliated manifolds}\label{ss: differential forms}

\subsubsection{The leafwise complex}\label{sss: leafwise complex}

Let $d_\FF\in\Diff^1(\FF;\Lambda\FF)$ be given by $(d_\FF\xi)|_L=d_L(\xi|_L)$ for every leaf $L$ and $\xi\in C^\infty(M;\Lambda\FF)$. Then $(C^\infty(M;\Lambda\FF),d_\FF)$ is a differential complex, called the {\em leafwise\/} ({\em de~Rham\/}) {\em complex\/}. This gives rise to the ({\em reduced\/}) {\em leafwise cohomology\/}\footnote{The term  {\em tangential cohomology\/} is also used.} (with complex coefficients), $H^*(\FF)=H^*(\FF;\C)$ and $\bar H^*(\FF)=\bar H^*(\FF;\C)$. Compactly supported versions may be also considered when $M$ is not compact.

Similarly, we can take coefficients in any complex $\FF$-flat vector bundle $E$ over $M$, obtaining the differential complex $(C^\infty(M;\Lambda\FF\otimes E),d_\FF)$, with $d_\FF\in\Diff^1(\FF;\Lambda\FF\otimes E)$, and the corresponding ({\em reduced\/}) {\em leafwise cohomology\/} with coefficients in $E$, $H^*(\FF;E)$ and $\bar H^*(\FF;E)$. For example, we can consider the vector bundle $E$ defined by the $\GL(n')$-principal bundle of (real) normal frames and any unitary representation of $\GL(n')$, with the $\FF$-flat structure induced by the $\FF$-flat structure of $N\FF$. A particular case is $\Lambda N\FF$, which gives rise to the differential complex $(C^\infty(M;\Lambda\FF\otimes\Lambda N\FF),d_\FF)$, and its compactly supported version. Note that
\[
\textstyle{\Lambda\FF\equiv\Lambda\FF\otimes\Lambda^0N\FF\subset\Lambda\FF\otimes\Lambda N\FF\;,}
\]
inducing an injection of topological complexes and their (reduced) cohomologies, and the same holds for the compactly supported versions. In fact, these are topological graded differential algebras with the exterior product, and the above injections are compatible with the product structures.

For any $\phi\in C^\infty(M_1,\FF_1;M_2,\FF_2)$, the morphisms $\phi_*:T\FF_1\to T\FF_2$ and $\phi_*:N\FF_1\to N\FF_2$ induce a morphism
\[
\textstyle{\phi^*:\phi^*(\Lambda\FF_2\otimes\Lambda N\FF_2)\to\Lambda\FF_1\otimes\Lambda N\FF_1}
\]
over $\id_{M_1}$, which in turn induces a continuous homomorphism of graded differential algebras, 
\begin{equation}\label{phi^*}
\phi^*:C^\infty(M_2;\Lambda\FF_2\otimes\Lambda N\FF_2)\to C^\infty(M_1;\Lambda\FF_1\otimes\Lambda N\FF_1)\;,
\end{equation}
and continuous homomorphisms between the corresponding (reduced) leafwise cohomologies. By restriction, we get the homomorphism
\[
\phi^*:C^\infty(M_2;\Lambda\FF_2)\to C^\infty(M_1;\Lambda\FF_1)\;,
\]
with analogous properties.

\subsubsection{Bigrading}\label{sss: bigrading}

Consider any splitting 
\begin{equation}\label{splitting}
TM=T\FF\oplus\bfH\cong T\FF\oplus N\FF\;,
\end{equation}
given by a transverse distribution $\bfH\subset TM$, and let $\Lambda\bfH=\bigwedge\bfH^*$. It induces a decomposition
\begin{equation}\label{Lambda M}
\Lambda M\equiv\Lambda\FF\otimes\textstyle{\Lambda\bfH}\cong\Lambda\FF\otimes\textstyle{\Lambda N\FF}\;,
\end{equation}
giving rise to the bigrading of $\Lambda M$ defined by\footnote{We have reversed the order given in \cite{AlvKordy2001} for the factors of the tensor product in the definition of $\Lambda^{u,v}M$ because the signs in some expressions become simpler. But we keep the same order for the ``transverse degree'' $u$ and the ``tangential degree'' $v$ in $\Lambda^{u,v}M$ because this is the usual order in the Leray spectral sequence of fiber bundles, generalized to foliations.}
\[
\Lambda^{u,v}M\equiv\Lambda^v\FF\otimes\textstyle{\Lambda^u\bfH}\cong\Lambda^v\FF\otimes\Lambda^uN\FF\;,
\]
and the corresponding bigrading of $C^\infty(M;\Lambda)$ with terms
\[
C^\infty(M;\Lambda^{u,v})\equiv C^\infty(M;\Lambda^v\FF\otimes\Lambda^uN\FF)\;.
\]
This bigrading depends on $\bfH$, but the spaces $\Lambda^{\ge u,\cdot}M$ and $C^\infty(M;\Lambda^{\ge u,\cdot})$ are independent of $\bfH$ (see e.g.\ \cite{Alv1989a}). In particular, every $\Lambda^{\ge u,\cdot}M/\Lambda^{\ge u+1,\cdot}M$ is independent of $\bfH$; indeed, there are canonical identities
\begin{equation}\label{bigwedge^ge u,cdot T^*M / bigwedge^ge u+1,cdot T^*M}
\Lambda^{\ge u,\cdot}M/\Lambda^{\ge u+1,\cdot}M\equiv\Lambda^{u,\cdot}M\equiv\Lambda\FF\otimes\Lambda^uN\FF\;,
\end{equation}
where only the middle bundle depends on $\bfH$. The de~Rham derivative on $C^\infty(M;\Lambda)$ decomposes into bi-homogeneous components,
\begin{equation}\label{d = d_0,1 + d_1,0 + d_2,-1}
d=d_{0,1}+d_{1,0}+d_{2,-1}\;,
\end{equation}
where the double subindex denotes the corresponding bi-degree. We have
\[
d_{0,1}\in\Diff^1(\FF;\Lambda M)\;,\quad d_{1,0}\in\Diff^1(M;\Lambda)\;,\quad d_{2,-1}\in\Diff^0(M;\Lambda)\;.
\]
Moreover\footnote{The sign of \cite[Lemma~3.4]{AlvKordy2001} is omitted here by our change in the definition of $\Lambda^{u,v}M$.}
\begin{equation}\label{d_0,1 equiv d_FF}
d_{0,1}\equiv d_\FF\;,
\end{equation}
via~\eqref{Lambda M}, and $d_{2,-1}=0$ if and only if $\bfH$ is completely integrable. Note that
\begin{equation}\label{d_0,1 = d on C^infty(M; Lambda^n',cdot)}
d_{0,1}=d:C^\infty(M;\Lambda^{n',\cdot})\to C^\infty(M;\Lambda^{n',\cdot})\;.
\end{equation}
By comparing bi-degrees in $d^2=0$, we get (see e.g.\ \cite{Alv1989a}):
\begin{equation}\label{d_0,1^2=...=0}
  d_{0,1}^2=d_{0,1}d_{1,0}+d_{1,0}d_{0,1}=0\;.
\end{equation}

For any $\phi\in C^\infty(M_1,\FF_1;M_2,\FF_2)$, we have restrictions
\[
\phi^*:C^\infty(M_2;\Lambda^{\ge u,\cdot})\to C^\infty(M_1;\Lambda^{\ge u,\cdot})
\]
of $\phi^*:C^\infty(M_2;\Lambda)\to C^\infty(M_1;\Lambda)$, which induce~\eqref{phi^*} using~\eqref{bigwedge^ge u,cdot T^*M / bigwedge^ge u+1,cdot T^*M}. Like in~\eqref{d = d_0,1 + d_1,0 + d_2,-1} and~\eqref{d_0,1 equiv d_FF},
\begin{equation}\label{phi^* = ...}
\phi^*=\phi^*_{0,0}+\phi^*_{1,-1}+\cdots:C^\infty(M_2;\Lambda)\to C^\infty(M_1;\Lambda)\;,
\end{equation}
and
\begin{equation}\label{phi^*_0,0 equiv phi^*}
\phi^*_{0,0}\equiv\phi^*
\end{equation}
 via~\eqref{Lambda M}, where the right-hand side is~\eqref{phi^*}.

For any $X\in\fX(M)$, let $\iota_X$ denote the corresponding inner product, and let $\bfV:TM\to T\FF$ and $\bfH:TM\to\bfH$ denote the projections defined by~\eqref{splitting}. By comparing bi-degrees in Cartan's formula, $\LL_X=d\iota_X+\iota_Xd$, we get a decomposition into bi-homogeneous components,
\[
\LL_X=\LL_{X,-1,1}+\LL_{X,0,0}+\LL_{X,1,-1}+\LL_{X,2,-2}\;;
\]
for instance,
\begin{equation}\label{LL_X,0,0}
\LL_{X,0,0}=d_{0,1}\iota_{\bfV X}+\iota_{\bfV X}d_{0,1}+d_{1,0}\iota_{\bfH X}+\iota_{\bfH X}d_{1,0}\;.
\end{equation}
It is easy to check that $\LL_{X,-1,1}$, $\LL_{X,1,-1}$ and $\LL_{X,2,-2}$ are of order zero, and
\begin{equation}\label{LL_X,0,0(xi wedge zeta)}
\LL_{X,0,0}(\alpha\wedge\beta)=\LL_{X,0,0}\alpha\wedge\beta+\alpha\wedge\LL_{X,0,0}\beta\;.
\end{equation}
Moreover $\LL_{X,0,0}=X$ and $\LL_{X,-1,1}=\LL_{X,1,-1}=\LL_{X,2,-2}=0$ on $C^\infty(M)$.

Assume that $X\in\fX(M,\FF)$ from now on. Then $\LL_{X,-1,1}=0$ by~\eqref{phi^* = ...}, and therefore
\begin{equation}\label{LL_X,0,0 d_{0,1} = d_{0,1} LL_X,0,0}
\LL_{X,0,0}d_{0,1}=d_{0,1}\LL_{X,0,0}\;,
\end{equation} 
as follows by comparing bi-degrees in the formula $\LL_Xd=d\LL_X$. Let $\Theta_X$ be the operator on $C^\infty(M;\Lambda\FF\otimes\Lambda N\FF)$ that corresponds to $\LL_{X,0,0}$ via~\eqref{Lambda M}. By~\eqref{LL_X,0,0(xi wedge zeta)} and~\eqref{LL_X,0,0 d_{0,1} = d_{0,1} LL_X,0,0},
\[
\Theta_X(\xi\wedge\zeta)=\Theta_X\xi\wedge\zeta+\xi\wedge\Theta_X\zeta\;,\quad\Theta_Xd_\FF=d_\FF\Theta_X\;.
\]

Let $(U,x)$ be a foliated chart of $\FF$, with $x=(x',x'')$, like in~\eqref{x = (x',x'')}. To emphasize the difference between the coordinates $x'$ and $x''$, we use the following notation on $U$ or $x(U)$. Let $x^{\prime i}=x^i$ and $\partial'_i=\partial_i$ for $i\le n'$, and $x^{\prime\prime i}=x^i$ and $\partial''_i=\partial_i$ for $i>n'$. Thus, when using $x^{\prime i}$ or $\partial'_i$, it will be understood that $i$ runs in $\{1,\dots,n'\}$, and, when using $x^{\prime\prime i}$ or $\partial''_i$, it will be understood that $i$ runs in $\{n'+1,\dots,n\}$. For multi-indices of the form $I=(i_1,\dots,i_n)\in\N_0^n$, write $\partial_I=\partial'_I\partial''_I$, where $\partial'_I=\partial_1^{i_1}\cdots\partial_{n'}^{i_{n'}}$ and $\partial''_I=\partial_{n'+1}^{i_{n'+1}}\cdots\partial_n^{i_n}$. For multi-indices of the form $J=\{j_1,\dots,j_r\}$ with $1\le j_1<\dots<j_r\le n$, let $dx^J=dx^{j_1}\wedge\dots\wedge dx^{j_r}$ be denoted by $dx^{\prime J}$ or $dx^{\prime\prime J}$ if $J$ only contains indices in $\{1,\dots,n'\}$ or $\{n'+1,\dots,n\}$, respectively. Using functions $f_I,f_{IJ}\in C^\infty(U)$, $d_\FF$ can be locally described by
\begin{equation}\label{d_FF(f_I dx^prime prime I)}
d_\FF(f_I\,dx^{\prime\prime I})=\partial''_jf_I\,dx^{\prime\prime j}\wedge dx^{\prime\prime I}\;,
\end{equation}
and~\eqref{d_0,1 equiv d_FF} means that
\begin{equation}\label{d_0,1(f_IJ dx''^I wedge dx'^J)}
d_{0,1}(f_{IJ}\,dx^{\prime\prime I}\wedge dx^{\prime J})=d_\FF(f_{IJ}\,dx^{\prime\prime I})\wedge dx^{\prime J}\;.
\end{equation}

\subsubsection{Compatibility of orientations}\label{sss: orientations}

A transverse orientation of $\FF$ can be described as a (necessarily $\nabla^\FF$-invariant) orientation of $N\FF$. It is determined by a non-vanishing real form $\omega\in C^\infty(M;\Lambda^{n'}N\FF)$; i.e., some real $\omega\in C^\infty(M;\Lambda^{n'})$ with
\[
T\FF = \{\,Y\in TM\mid\iota_Y\omega=0\,\}\;.
\]
On the other hand, an orientation of $T\FF$ is called an {\em orientation\/} of $\FF$, which can be described by a non-vanishing form $\chi\in C^\infty(M;\Lambda^{n''}\FF)\equiv C^\infty(M;\Lambda^{0,n''})$. When $\FF$ is equipped with a transverse orientation (respectively, an orientation), it is said to be {\em transversely oriented\/} (respectively, {\em oriented\/}). Given transverse and tangential orientations of $\FF$ described by forms $\omega$ and $\chi$ as above, we get an induced orientation of $M$ defined by the non-vanishing form $\chi\wedge\omega\in C^\infty(M;\Lambda^{n',n''})=C^\infty(M;\Lambda^n)$.

Suppose that moreover $M$ is a Riemannian manifold, and take $\bfH=T\FF^\perp$. Then, using~\eqref{Lambda M}, the induced Hodge star operators, $\star$ on $\Lambda M$, $\star_\FF$ on $\Lambda\FF$ and $\star_\perp$ on $\Lambda\bfH$, satisfy\footnote{The sign of this expression is different in \cite[Lemma~3.2]{AlvKordy2001} by the different choice of induced orientation of $M$, given by $\omega\wedge\chi$ in that paper.} \cite[Lemma~4.8]{AlvTond1991}, \cite[Lemma~3.2]{AlvKordy2001}
\begin{equation}\label{star equiv (-1)^u(n''-v) star_FF otimes star_perp}
\star\equiv(-1)^{u(n''-v)}{\star_\FF}\otimes{\star_\perp}:\Lambda^{u,v}M\to\Lambda^{n'-u,n''-v}M\;.
\end{equation}
If we take $\omega=\star_\perp1\in C^\infty(M;\Lambda^{n'}\bfH)\equiv C^\infty(M;\Lambda^{n',0})$ and $\chi=\star_\FF1\in C^\infty(M;\Lambda^{n''}\FF)\equiv C^\infty(M;\Lambda^{0,n''})$, then $\chi\wedge\omega=\star1\in C^\infty(M;\Lambda^n)$.

\subsubsection{Bihomogeneous components of the coderivative}

Let $g$ be a Riemannian metric on $M$. On the one hand, $g$ induces a Hermitian structure on $\Lambda\FF\otimes\Lambda N\FF$, and we can consider $\delta_\FF=d_\FF^*$ on $C^\infty(M;\Lambda\FF\otimes\Lambda N\FF)$. On the other hand, by taking formal adjoints in~\eqref{d = d_0,1 + d_1,0 + d_2,-1} with $\bfH=T\FF^\perp$, we get the decomposition into bi-homogeneous components, 
\begin{equation}\label{delta = delta_0,-1 + delta_-1,0 + delta_-2,1}
\delta=\delta_{0,-1}+\delta_{-1,0}+\delta_{-2,1}\;,
\end{equation}
where $\delta_{-i,-j}=d_{i,j}^*$. From~\eqref{d_0,1^2=...=0}, it follows that
\begin{equation}\label{delta_0,-1^2=...=0}
  \delta_{0,-1}^2=\delta_{0,-1}\delta_{-1,0}+\delta_{-1,0}\delta_{0,-1}=0\;.
\end{equation}

\begin{lem}\label{l: g bundle-like <=> delta_0,1 equiv delta_FF}
The metric $g$ is bundle-like if and only if $\delta_{0,-1}\equiv\delta_\FF$ via~\eqref{Lambda M}.
\end{lem}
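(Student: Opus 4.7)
The plan is to prove the equivalence by a local Hodge star computation, identifying the discrepancy $\delta_{0,-1}-\delta_\FF$ acting on leafwise forms with an interior product operator that vanishes precisely when the transverse volume form is holonomy invariant, i.e., when $g$ is bundle-like.

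First, I would reduce to the (locally) oriented setting, which is harmless since the claim is local. Choose orientations of $T\FF$ and $\bfH=T\FF^\perp$, and set $\omega=\star_\perp 1\in C^\infty(M;\Lambda^{n',0})$ and $\chi=\star_\FF 1\in C^\infty(M;\Lambda^{0,n''})$, so that $\chi\wedge\omega=\star 1$ is the Riemannian volume form. Since $\Lambda^{n'+1,\cdot}M=0$, equations~\eqref{d = d_0,1 + d_1,0 + d_2,-1} and~\eqref{d_0,1 equiv d_FF} give $d\omega=d_{0,1}\omega=d_\FF\omega$. Because $\omega$ is a nowhere vanishing section of the line bundle $\Lambda^{n',0}M$, there is a unique leafwise one-form $\kappa\in C^\infty(M;T^*\FF)$ with
$$
d_\FF\omega=\kappa\wedge\omega.
$$
Cartan's formula together with $\iota_Y\omega=0$ for $Y\in\fX(\FF)$ yields $\LL_Y\omega=\kappa(Y)\omega$, so $\kappa=0$ if and only if $\omega$ is holonomy invariant. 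This is the standard characterization of bundle-like metrics: $g$ is bundle-like if and only if $\kappa=0$.

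Second, for a pure leafwise form $\alpha\in C^\infty(M;\Lambda^{0,v})$, I would compute $\delta_{0,-1}\alpha$ via $\delta=(-1)^{nr+n+1}\star d\star$ from~\eqref{delta = (-1)^{nr+n+1} star d star}, using the splitting $\star\equiv\pm\star_\FF\otimes\star_\perp$ from~\eqref{star equiv (-1)^u(n''-v) star_FF otimes star_perp}. We have $\star\alpha=\pm\star_\FF\alpha\wedge\omega$ of bi-degree $(n',n''-v)$, hence $d\star\alpha=d_{0,1}(\star\alpha)$ by the vanishing of higher transverse degrees. Leibniz and $d_\FF\omega=\kappa\wedge\omega$ give
$$
d_{0,1}(\star_\FF\alpha\wedge\omega)=d_\FF\star_\FF\alpha\wedge\omega\pm\star_\FF\alpha\wedge\kappa\wedge\omega.
$$
Applying $\star$ once more summand by summand via~\eqref{star equiv (-1)^u(n''-v) star_FF otimes star_perp}, the first term collapses to $\pm\star_\FF d_\FF\star_\FF\alpha$, which by the leafwise analogue of~\eqref{delta = (-1)^{nr+n+1} star d star} is $\delta_\FF\alpha$; the second, after the leafwise star identity $\star_\FF(\beta\wedge\kappa)=\pm\iota_{\kappa^\sharp}\star_\FF\beta$, becomes $\pm\iota_{\kappa^\sharp}\alpha$, where $\kappa^\sharp\in\fX(\FF)$ is the leafwise metric dual of $\kappa$. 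Thus
$$
\delta_{0,-1}\alpha-\delta_\FF\alpha=\pm\iota_{\kappa^\sharp}\alpha.
$$

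Third, the ``if'' direction is immediate: bundle-likeness forces $\kappa=0$, so $\delta_{0,-1}\equiv\delta_\FF$ on leafwise forms. Conversely, if $\iota_{\kappa^\sharp}$ vanishes as an operator on all leafwise forms, then $\kappa^\sharp=0$ (test against any one-form $\alpha$ with $\alpha(\kappa^\sharp)\ne 0$), so $\kappa=0$ and $g$ is bundle-like. To pass from pure leafwise forms to the full space $C^\infty(M;\Lambda\FF\otimes\Lambda N\FF)$, I would use that both $\delta_{0,-1}$ and $\delta_\FF$ are of bi-degree $(0,-1)$ and preserve the transverse factor $\Lambda^uN\FF$ in~\eqref{Lambda M}, and that the correction $\iota_{\kappa^\sharp}$ is $\Lambda N\FF$-linear, so the identity on pure leafwise forms propagates to all $(u,v)$-components by wedging with local $\nabla^\FF$-parallel frames of $\Lambda N\FF$.

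The main technical obstacle lies in the second step: the sign bookkeeping in the double application of $\star$ and the leafwise star identity recasting $\star_\FF(\beta\wedge\kappa)$ as $\pm\iota_{\kappa^\sharp}\star_\FF\beta$. Both are standard once an adapted local orthonormal frame of $TM=T\FF\oplus\bfH$ is fixed and~\eqref{star equiv (-1)^u(n''-v) star_FF otimes star_perp} is applied term by term, so the only real work is the intrinsic verification that the expression $d_\FF\omega=\kappa\wedge\omega$ is independent of the local orientations chosen, which follows from the canonical nature of the line bundle $\Lambda^{n'}N^*\FF\equiv\Lambda^{n',0}M$.
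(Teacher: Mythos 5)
Your identification $\delta_{0,-1}\alpha-\delta_\FF\alpha=\pm\iota_{\kappa^\sharp}\alpha$ on pure leafwise forms $\alpha\in C^\infty(M;\Lambda^{0,v})$ is essentially right, and it is close in spirit to the paper's proof via $\delta=\pm\star d\star$ and the splitting $\star\equiv\pm\star_\FF\otimes\star_\perp$. However, the argument contains a genuine gap that makes it fail in codimension $\geq 2$. The claim ``$g$ is bundle-like if and only if $\kappa=0$'' is not correct: $\kappa=0$ only says that the transverse \emph{volume form} $\omega=\star_\perp 1$ is holonomy invariant, whereas bundle-likeness requires the \emph{entire} transverse metric $g|_\bfH$ (equivalently the full operator $\star_\perp$, not just its top power $\omega$) to be holonomy invariant. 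A concrete counterexample: on $M=\R^2_{x,y}\times\R_z$ with $\FF$ the foliation by vertical lines $\{(x_0,y_0)\}\times\R$ and $g=a(z)\,dx^2+b(z)\,dy^2+dz^2$ with $a(z)b(z)=1$ and $a$ nonconstant, the transverse volume $\sqrt{ab}\,dx\wedge dy=dx\wedge dy$ is leafwise constant, so $\kappa=0$, but $g$ is not bundle-like.

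This error propagates into your third step: the claim that $\delta_{0,-1}-\delta_\FF$ is ``$\Lambda N\FF$-linear'' (commutes with wedging by $\nabla^\FF$-parallel sections of $\Lambda N\FF$) is false precisely because $\delta_{0,-1}=\pm(\star_\FF\otimes\star_\perp)\,d_{0,1}\,(\star_\FF\otimes\star_\perp)$ uses $\star_\perp$, and $\star_\perp$ does not commute with $\nabla^\FF$ unless $g$ is already bundle-like; the reduction is therefore circular in the ``only if'' direction. In the counterexample above one computes $\delta_{0,-1}(f\,dz\wedge dx)=(-\partial_zf+f\,a'/a)\,dx$ while $\delta_\FF(f\,dz\otimes\overline{dx})=-\partial_zf\cdot 1\otimes\overline{dx}$, so the two operators disagree on a mixed form even though $\kappa=0$, confirming that your formula does not extend to $u>0$. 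The paper's proof avoids this by tracking the full $\star_\perp$ rather than only $\omega$: it shows, using~\eqref{d_0,1(f_IJ dx''^I wedge dx'^J)},~\eqref{star equiv (-1)^u(n''-v) star_FF otimes star_perp} and the fact that $\star_\perp$ determines $g|_\bfH$, that $g$ is bundle-like if and only if $d_{0,1}$ commutes with $1\otimes\star_\perp$, and then converts this commutation into $\delta_{0,-1}\equiv\delta_\FF$ via~\eqref{delta = (-1)^{nr+n+1} star d star}. Your approach would be correct in codimension one, where the transverse metric is determined by its volume form and $\Lambda N\FF$ is two-dimensional, but the lemma is stated in arbitrary codimension.
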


\begin{proof}
By working locally, we can assume that $\FF$ is transversely oriented and oriented, and consider the induced orientation of $M$ according to Section~\ref{sss: orientations}. By~\eqref{d_0,1(f_IJ dx''^I wedge dx'^J)} and~\eqref{star equiv (-1)^u(n''-v) star_FF otimes star_perp}, and since $\star_\perp$ determines $g|_\bfH$, we get that $g$ is bundle like if and only if $d_{0,1}$ commutes with $1\otimes{\star_\perp}$, which is equivalent to $\delta_{0,-1}\equiv\delta_\FF$ by~\eqref{delta = (-1)^{nr+n+1} star d star}. 
\end{proof}

With the notation of~\eqref{d_0,1(f_IJ dx''^I wedge dx'^J)}, the equality $\delta_{0,-1}\equiv\delta_\FF$ means that
\begin{equation}\label{delta_0,-1(f_IJ dx''^I wedge dx'^J)}
\delta_{0,-1}(f_{IJ}\,dx^{\prime\prime I}\wedge dx^{\prime J})=\delta_\FF(f_{IJ}\,dx^{\prime\prime I})\wedge dx^{\prime J}\;.
\end{equation}

\section{Riemannian foliations of bounded geometry}\label{s: Riem folns of bd geometry}

With the notation of Section~\ref{s: prelims on folns}, suppose that $\FF$ is Riemannian. Let $g$ be a bundle-like metric on $M$, $\nabla$ its Levi-Civita connection and $R$ its curvature. 

The vector subbundle $T\FF^\perp\subset TM$ is called {\em horizontal\/}, giving rise to the concepts of {\em horizontal\/} vectors, vector fields and frames. Now, we take $\bfH=T\FF^\perp$ in~\eqref{splitting}, and therefore $\bfV:TM\to T\FF$ and $\bfH:TM\to \bfH$ are the orthogonal projections. The O'Neill tensors \cite{ONeill1966} of the local Riemannian submersions defining $\FF$ can be combined to produce $(1,2)$-tensors $\sT$ and $\sA$ on $M$, defined by
\begin{align*}
\sT_EF&=\bfH\nabla_{\bfV E}(\bfV F)+\bfV\nabla_{\bfV E}(\bfH F)\;,\\
\sA_EF&=\bfH\nabla_{\bfH E}(\bfV F)+\bfV\nabla_{\bfH E}(\bfH F)\;,
\end{align*}
for all $E,F\in\fX(M)$. According to \cite[Theorem~4]{ONeill1966}, if $M$ is connected, given $g$ and any $p\in M$, the foliation $\FF$ is determined by $\sT$, $\sA$ and $T_p\FF$. 

A Riemannian connection $\rnabla$ on $M$, called {\em adapted\/}, is defined by \cite{AlvTond1991}
  $$
\rnabla_EF=\bfV\nabla_E(\bfV F)+\bfH\nabla_E(\bfH F)\;,
  $$
for all $E,F\in\fX(M)$. For $V,W\in\fX(\FF)$ and $X\in C^\infty(M;\bfH)$, we have
\begin{equation}\label{nabla-rnabla}
\nabla_V-\rnabla_V=\sT_V\;,\quad\nabla_X-\rnabla_X=\sA_X\;,
\end{equation}
and \cite[Eqs.~(3.8)--(3.10)]{AlvKordyLeichtnam2014}
\begin{align}
\nabla^\FF_VW&=\rnabla_VW\;,\label{rnabla_V W}\\
\nabla^\FF_V\overline{X}&=\overline{\rnabla_VX-\sA_XV}\;,\notag\\
\bfV([X,V])&=\rnabla_XV-\sT_VX\;.\label{bV[X,V]}
\end{align}  

By~\eqref{rnabla_V W}, the $\rnabla$-geodesics that are tangent to the leaves at some point remain tangent to the leaves at every point, and they are the geodesics of the leaves. So the leaves are $\rnabla$-totally geodesic, but not necessarily $\nabla$-totally geodesic. By the second equality of~\eqref{nabla-rnabla} and \cite[Lemma~2]{ONeill1966}, $\rnabla$ and $\nabla$ have the same geodesics orthogonal to the leaves.

Given any $p\in M$, let $x':U\to\Sigma$ be a distinguished submersion so that $p\in U$. Consider the Riemannian metric on $\Sigma$ such that $x'$ is a Riemannian submersion, and let $\cnabla$ and $\cexp$ denote the corresponding Levi-Civita connection and exponential map of $\Sigma$. From \cite[Lemma~1-(3)]{ONeill1966}, it follows that $\rnabla_XY\in\fX(U,\FF|_U)$ for all horizontal $X,Y\in\fX(U,\FF|_U)$, and moreover
\begin{equation}\label{overline rnabla_XY}
\overline{\rnabla_XY}=\cnabla_{\overline X}\overline Y\;.
\end{equation}

Let $\rexp$ denote the exponential map of the geodesic spray of $\rnabla$ (see e.g.\ \cite[pp.~96--99]{Poor1981}). Observe that the exponential map of the leaves is a restriction of $\rexp$. The maps $\rexp$ and $\cexp$ restrict to diffeomorphisms of some open neighborhoods, $V$ of $0$ in $T_pM$ and $\check V$ of $0$ in $T_{x'(p)}\Sigma$, to some open neighborhoods, $O$ of $p$ in $M$ and $\check O$ of $x'(p)$ in $\Sigma$.  Moreover we can suppose that $O\subset U$, $x'_*(V)\subset\check V$ and $x'(O)\subset\check O$. By~\eqref{overline rnabla_XY},
\begin{equation}\label{z rexp = cexp z_0}
x'\,\rexp=\cexp\,x'_*
\end{equation}
on $V\cap T_p\FF^\perp$. Let $\kappa_p$ (or simply $\kappa$) be the smooth map of some neighborhood $W$ of $0$ in $T_pM$ to $M$ defined by
$$
\kappa_p(X)=\rexp_q(\rP_{\bfH X}\bfV X)\;,
$$
where $q=\rexp_p(\bfH X)$, and $\rP_{\bfH X}:T_pM\to T_qM$ denotes the $\rnabla$-parallel transport along the $\rnabla$-geodesic $t\mapsto\rexp_p(t\bfH X)$, $0\le t\le1$, which is also a $\nabla$-geodesic because it is orthogonal to the leaves. By choosing $W$ small enough, we have $W\subset V$ and $\kappa(W)\subset O$; thus $x'_*(W)\subset\cV$ and $x'\kappa(W)\subset\cO$. For $X,Y\in W$, we have $X-Y\in T_p\FF$ if and only if $\kappa(X)$ and $\kappa(Y)$ belong to the same plaque of $U$  \cite[Proposition~6.1]{AlvKordyLeichtnam2014}. We also have $x'\kappa(X)=\cexp\,x'_*(X)$ for all $X\in W\cap T_p\FF^\perp$ by~\eqref{z rexp = cexp z_0}. Furthermore $\kappa$ defines a diffeomorphism of some neighborhood of $0$ in $T_pM$ to some neighborhood of $p$ in $M$. By choosing horizontal and vertical orthonormal frames at $p$, we get identities $T_p\FF^\perp\equiv\R^{n'}$ and $T_p\FF\equiv\R^{n''}$. Then, for some open balls centered at the origin, $B'$ in $\R^{n'}$ and $B''$ in $\R^{n''}$, we can assume that $\kappa$ is a diffeomorphism of $B'\times B''$ to some open neighborhood of $p$. From now on, we use the notation $U=\kappa(B'\times B'')$ and $\kappa^{-1}=x=(x',x'')$ on $U$, like in~\eqref{x = (x',x'')}. This foliated chart $(U,x)$ is called {\em normal\/}, as well as the foliated coordinates $x$. As usual, $g_{ij}$ denotes the corresponding coefficients of the bundle-like metric, and let $(g^{ij})=(g_{ij})^{-1}$. On $U$, we have\footnote{We use the convention that repeated indices are summed.}
\begin{align}
\bfV&=g_{ik}g^{kj}\,\partial''_j\otimes dx^{\prime i}+\partial''_i\otimes dx^{\prime\prime i}\;,\label{bV}\\
\bfH&=\partial'_i\otimes dx^{\prime i}-g_{ik}g^{kj}\,\partial''_j\otimes dx^{\prime i}\;,\label{bH}
\end{align}
where $k$ runs in $\{n'+1,\dots,n\}$ \cite[Eq.~(7.2)]{AlvKordyLeichtnam2014}.

It will be said that $\FF$ has {\em positive injectivity bi-radius\/}\footnote{In \cite[Section~8]{AlvKordyLeichtnam2014}, the concept of transverse injectivity radii was introduced for a defining cocycle, and it was wrongly stated that its positivity is independent of the defining cocycle. Then some step in the proof of \cite[Theorem~8.4]{AlvKordyLeichtnam2014} does not work. This problem is clearly solved with the new concept of positive injectivity bi-radius.} if there are normal foliated coordinates $x_p:U_p\to B'\times B''$ at every $p\in M$ such that the balls $B'$ and $B''$ are independent of $p$. 

\begin{defn}[Alvarez-Kordyukov-Leichtnam {\cite[Definition~8.1]{AlvKordyLeichtnam2014}}]\label{d: bd geometry}
It is said that $\FF$ is of {\em bounded geometry\/} if it has positive injectivity bi-radius, and the functions $|\nabla^mR|$, $|\nabla^m\sT|$ and $|\nabla^m\sA|$ are uniformly bounded on $M$ for every $m\in\N_0$. 
\end{defn}

Another definition of bounded geometry for Riemannian foliations was given by Sanguiao \cite[Definition~1.7]{Sanguiao2008}. Definition~\ref{d: bd geometry} also has the following chart characterization, which is at least as strong as Sanguiao's definition \cite[Remark~8.5]{AlvKordyLeichtnam2014}.

\begin{thm}[{\'Alvarez-Kordyukov-Leichtnam \cite[Theorem~8.4]{AlvKordyLeichtnam2014}}]\label{t: foln of bd geometry}
  With the above notation, $\FF$ is of bounded geometry if and only if there is a normal foliated chart $x_p:U_p\to B'\times B''$ at every $p\in M$, such that the balls $B'$ and $B''$ are independent of $p$, and the corresponding coefficients $g_{ij}$ and $g^{ij}$, as family of smooth functions on $B'\times B''$ parametrized by $i$, $j$ and $p$, lie in a bounded subset of the Fr\'echet space $C^\infty(B'\times B'')$.
\end{thm}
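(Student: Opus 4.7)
The plan is to mimic the strategy of Eichhorn's chart characterization (Theorem~\ref{t: mfd of bd geom}), but replacing the $\nabla$-exponential and pointwise bounds on the curvature $R$ by the mixed construction $\kappa_p(X)=\rexp_q(\rP_{\bfH X}\bfV X)$ and the pointwise bounds on the triple $(R,\sT,\sA)$. Both implications will be proved by translating, back and forth, between the tensorial data that appear in Definition~\ref{d: bd geometry} and the coordinate coefficients $g_{ij}$, $g^{ij}$ in the normal foliated charts $x_p=\kappa_p^{-1}$.

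For the ``only if'' direction, assume $\FF$ is of bounded geometry. The positive injectivity bi-radius condition gives the family of normal foliated charts $x_p:U_p\to B'\times B''$ with $B'$ and $B''$ independent of $p$. To bound $g_{ij}=g(\kappa_{p*}\partial_i,\kappa_{p*}\partial_j)$ uniformly in $C^\infty(B'\times B'')$, I would differentiate $\kappa_p$ in the directions of $B'\times B''$ and derive Jacobi-type ODEs for $\kappa_{p*}\partial_i$. For the horizontal $B'$-directions the relevant equation is the $\rnabla$-Jacobi equation along the geodesic $t\mapsto\rexp_p(tV)$ with $V\in T_p\FF^\perp$; for the vertical $B''$-directions it is the leafwise Jacobi equation along $s\mapsto\rexp_q(sW)$ with $W\in T_q\FF$, which coincides with a $\nabla$-Jacobi equation in the leaf by~\eqref{rnabla_V W}; the mixed directions bring in the $\rnabla$-parallel transport $\rP_{\bfH X}$. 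By~\eqref{nabla-rnabla}, the curvature of $\rnabla$ and the time derivative of $\rP_{\bfH X}$ are universal polynomial expressions in $R$, $\sT$, $\sA$ and their covariant derivatives. Since these tensors are $C^\infty$-uniformly bounded, a standard Gronwall iteration produces uniform $C^m$ bounds on each $\kappa_{p*}\partial_i$ on $B'\times B''$, for every $m$, hence on $(g_{ij})$. Uniform ellipticity of $g$ plus the smooth-inversion algebra then yields the analogous bounds on $(g^{ij})$.

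For the ``if'' direction, suppose the families $\{(g_{ij})_p\}$ and $\{(g^{ij})_p\}$ are bounded in $C^\infty(B'\times B'')$. Standard formulas express the Christoffel symbols of $\nabla$ polynomially in $g_{ij}$, $g^{ij}$ and their first derivatives, so they are uniformly $C^\infty$-bounded, and so is $R$ together with all its covariant derivatives. The vertical and horizontal projections have the explicit coordinate expressions~\eqref{bV} and~\eqref{bH}, which are polynomial in $g_{ij}$ and $g^{ij}$; hence $\bfV$ and $\bfH$ are uniformly $C^\infty$-bounded. Substituting these expressions into the defining formulas for $\sT$ and $\sA$ produces the required uniform bounds on $|\nabla^m\sT|$ and $|\nabla^m\sA|$. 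Combined with the positive injectivity bi-radius built into the hypothesis, this gives bounded geometry of $\FF$ in the sense of Definition~\ref{d: bd geometry}.

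The main obstacle is the forward direction, more precisely the propagation of $C^m$ estimates on $\kappa_p$ uniformly in $p$. The difficulty is not conceptual but organizational: the two-stage nature of $\kappa_p$ means that mixed partial derivatives couple the horizontal Jacobi equation, the variation of $\rP_{\bfH X}$ in $X$, and the tangential Jacobi equation, so the Gronwall iteration must be set up so that all of these are handled simultaneously. The reason the argument closes is precisely that $\nabla-\rnabla$ is controlled by $\sT$ and $\sA$ on their respective arguments, which are assumed bounded to any order; this is what converts the natural estimates for the adapted connection into estimates on $g_{ij}$ and its derivatives. Once this is carried out in a single normal foliated chart and the bounds are seen to depend only on the structural constants in Definition~\ref{d: bd geometry}, the theorem follows.
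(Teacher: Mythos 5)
The paper itself gives no proof of this theorem: it is quoted from \cite[Theorem~8.4]{AlvKordyLeichtnam2014}, and the footnote attached to the definition of positive injectivity bi-radius in Section~\ref{s: Riem folns of bd geometry} records that the proof given in that reference had a gap (around the notion of ``transverse injectivity radii'' for a defining cocycle) which the present notion is designed to fix; so there is no internal argument to compare yours against. Your Eichhorn-style outline is the natural route and is sound in broad strokes: for ``only if'', differentiate the two-stage map $\kappa_p$ and propagate the bounds on $R$, $\sT$, $\sA$ through the $\rnabla$-Jacobi and $\rnabla$-parallel-transport ODEs by Gronwall; for ``if'', recover $\Gamma^k_{ij}$, $R$, then $\bfV,\bfH$ from \eqref{bV}--\eqref{bH}, and then $\sT$, $\sA$, all as universal polynomials in $g_{ij}$, $g^{ij}$ and their derivatives.

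Two places in the forward direction are asserted rather than argued and are precisely where the work lies. First, for the Gronwall scheme to close one must control not merely $\rR$ and the derivative of $\rP_{\bfH X}$ but all covariant derivatives $\nabla^m\bfV$, $\nabla^m\bfH$ in terms of $R,\sT,\sA,\nabla\sT,\nabla\sA,\dots$; this does hold because the mixed components $\bfH\nabla_E(\bfV F)$ and $\bfV\nabla_E(\bfH F)$ are exactly $\sT$- and $\sA$-terms, but it has to be said so that the class of estimated quantities is stable under the iteration. Second, passing from bounds on $(g_{ij})$ to bounds on $(g^{ij})$ requires a uniform lower bound on $\det(g_{ij})$, which is not a hypothesis; it comes from $g_{ij}(0)=\delta_{ij}$ together with the $C^1$-bounds you have just derived, after a uniform shrinking of $B'\times B''$ (which the statement permits, since it only asks for \emph{some} choice of balls independent of $p$), not from an unestablished ``uniform ellipticity of $g$''.
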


In this section, assume from now on that $\FF$ is of bounded geometry. Then $M$ and the disjoint union of the leaves are of bounded geometry \cite[Remark~8.2 and Proposition~8.6]{AlvKordyLeichtnam2014}. Consider the foliated charts $x_p:U_p\to B'\times B''$ given by Theorem~\ref{t: foln of bd geometry}. The radii of the balls $B'$ and $B''$ will be denoted by $r'_0$ and $r''_0$. By the usual expression of the Christoffel symbols $\Gamma_{ij}^k$ of $\nabla$ in terms of the metric coefficients $g_{ij}$ and $g^{ij}$, and by~\eqref{bV} and~\eqref{bH}, it follows that the Christoffel symbols $\rGamma_{ij}^k$ of $\rnabla$, as family of smooth functions on $B'\times B''$ parametrized by $i$, $j$, $k$ and $p$, also lie in a bounded subset of the Fr\'echet space $C^\infty(B'\times B'')$.

\begin{prop}[{\'Alvarez-Kordyukov-Leichtnam \cite[Proposition~8.6]{AlvKordyLeichtnam2014}}]\label{p: B(p,r) subset U_p}
For some $r>0$, we have $B(p,r)\subset U_p$ for all $p\in M$. 
\end{prop}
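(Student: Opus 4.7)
The plan is to compare $\kappa_p$ with the Riemannian exponential map $\exp_p$ of $(M,g)$, use the uniform $C^\infty$ bounds furnished by bounded geometry of $\FF$ to make this comparison $p$-uniform, and then apply a quantitative inverse function theorem.

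\textbf{Step 1: $(d\kappa_p)_0=\id_{T_pM}$ for every $p$.} If $X\in T_p\FF^\perp$, then $\bfV X=0$ and $\kappa_p(tX)=\rexp_p(tX)$; since this $\rnabla$-geodesic is orthogonal to the leaves, it is also a $\nabla$-geodesic, hence equals $\exp_p(tX)$. If $X\in T_p\FF$, then $\bfH X=0$ and $\kappa_p(tX)=\rexp_p(tX)$ is a geodesic of the leaf $L_p$ (using $\nabla^\FF_VW=\rnabla_VW$). In both cases $\frac{d}{dt}\kappa_p(tX)\big|_{t=0}=X$, and the claim follows by linearity.

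\textbf{Step 2: Uniform smoothness.} Fixing a $g$-orthonormal frame at each $p$, read $\kappa_p$ and $\exp_p$ as maps from an open subset of $\R^n$ to $M$. By Theorem~\ref{t: foln of bd geometry} the coefficients $g_{ij}$ are equi-bounded in the Fr\'echet space $C^\infty(B'\times B'')$, and from the standard formulas and~\eqref{bV}--\eqref{bH} the Christoffel symbols $\Gamma^k_{ij}$ and $\rGamma^k_{ij}$ are equi-bounded in $C^\infty(B'\times B'')$ as well. Feeding these bounds into the geodesic ODEs for $\nabla$ and $\rnabla$ and into the parallel-transport ODE for $\rP$, standard smooth-dependence-on-parameters arguments (as in the proof of Schick's Proposition~\ref{p: |partial_I(y_q y_p^-1)|}) show that the family $\{\phi_p := \exp_p^{-1}\circ\kappa_p\}_{p\in M}$, defined on a common neighborhood of $0$, is equi-bounded in $C^\infty$. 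Combined with Step~1, each $\phi_p$ satisfies $\phi_p(0)=0$ and $(d\phi_p)_0=\id$.

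\textbf{Step 3: Conclusion via inverse function theorem.} The quantitative inverse function theorem applied to this equi-bounded family yields uniform $r_1,r_2>0$, with $r_1\le\min(r'_0,r''_0)$, such that $\phi_p\bigl(B_{T_pM}(0,r_1)\bigr)\supset B_{T_pM}(0,r_2)$ for all $p$. Since $\FF$ of bounded geometry forces $M$ itself to be of bounded geometry \cite[Remark~8.2]{AlvKordyLeichtnam2014}, $\inf_M\inj_M>0$. Set $r:=\min(r_2,\inf_M\inj_M)$. Then $B(p,r)=\exp_p(B_{T_pM}(0,r))$, and since $|X|<r_1$ implies $|\bfH X|,|\bfV X|<r_1$ so that $X\in B'\times B''$,
\[
B(p,r)\subset\exp_p\bigl(\phi_p(B_{T_pM}(0,r_1))\bigr)=\kappa_p\bigl(B_{T_pM}(0,r_1)\bigr)\subset\kappa_p(B'\times B'')=U_p,
\]
which is the desired inclusion.

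The main obstacle is Step~2: carefully tracking the $p$-dependence of the ODEs governing $\rexp$, $\exp$ and $\rP$ to obtain genuine $C^\infty$-equi-boundedness of $\phi_p$ in coordinates, rather than merely pointwise smoothness for each individual $p$. This is analogous to, and can be modelled on, Schick's transition estimate between the normal charts $y_p,y_q$, but must now handle mixed data from the two connections $\nabla$ and $\rnabla$ together with the horizontal/vertical splitting.
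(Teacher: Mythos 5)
This paper does not prove the statement — it is quoted from \cite[Proposition~8.6]{AlvKordyLeichtnam2014} — so there is no in-paper argument to compare against; I can only assess your proposal on its own terms.

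Your outline is sound. Step~1 is correct: $\kappa_p$ restricts to $\rexp_p$ on each of $T_p\FF^\perp$ and $T_p\FF$, on horizontal vectors $\rexp$ and $\exp$ agree, and on vertical vectors $\rexp$ gives leaf geodesics, so $(d\kappa_p)_0=\id$. Step~2 is where the real work lies, as you say; the needed ingredients (equi-boundedness of $g_{ij}$, $g^{ij}$, $\Gamma^k_{ij}$, $\rGamma^k_{ij}$ in $C^\infty(B'\times B'')$) are all available from Theorem~\ref{t: foln of bd geometry} and the remarks after it. One small gap: you form $\phi_p=\exp_p^{-1}\circ\kappa_p$ on ``a common neighborhood of $0$,'' but $\exp_p^{-1}$ is only defined on $B(p,\inj_M(p))$, so you first have to show that $\kappa_p$ maps a uniform ball $B_{T_pM}(0,s)$ into $B(p,\inf_M\inj_M)$; this follows from the same equi-bounds (indeed, from a uniform Lipschitz estimate for $x_p^{-1}=\kappa_p$), but it should be stated before $\phi_p$ is introduced, not after.

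Your route is also heavier than necessary. The statement can be obtained directly from Theorem~\ref{t: foln of bd geometry} without invoking $\exp_p$, the identity of $(d\kappa_p)_0$, the inverse function theorem, or even the bounded geometry of $M$. Since $g_{ij},g^{ij}$ lie in a bounded set of $C^\infty(B'\times B'')$, the pulled-back metric $(\kappa_p)^*g$ on $B'\times B''$ is uniformly comparable to the Euclidean metric, say $C^{-1}|\cdot|\le|\cdot|_g\le C|\cdot|$ with $C$ independent of $p$. Any piecewise smooth path from $p$ that leaves $U_p$, read in the $x_p$ chart, starts at $0$ and reaches $\partial(B'\times B'')$, so its $g$-length is at least $C^{-1}\min(r'_0,r''_0)$. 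Therefore, if $d(p,q)<r:=\tfrac{1}{2}C^{-1}\min(r'_0,r''_0)$, every path from $p$ to $q$ of length $<r$ stays in $U_p$, and hence $q\in U_p$; that is, $B(p,r)\subset U_p$. This argument trades your ODE comparison and quantitative IFT for a single metric-comparison estimate, and sidesteps the implicit reliance on $\inf_M\inj_M>0$.
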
 

\begin{prop}[{\'Alvarez-Kordyukov-Leichtnam \cite[Proposition~8.7]{AlvKordyLeichtnam2014}}]
\label{p: changes of normal foliated coordinates}
For every multi-index $I$, the function $|\partial_I(x_qx_p^{-1})|$ is bounded on $x_p(U_p\cap U_q)$, uniformly on $p,q\in M$. 
\end{prop}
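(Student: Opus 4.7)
The plan is to adapt Schick's proof of the Riemannian version of this statement (cited as Proposition~\ref{p: |partial_I(y_q y_p^-1)|}) to the foliated setting, with the adapted connection $\rnabla$ playing the role of the Levi-Civita connection. The crucial input is that, as observed after Theorem~\ref{t: foln of bd geometry}, the Christoffel symbols $\rGamma^k_{ij}$ of $\rnabla$ in any normal foliated chart $x_p$ form a bounded family in $C^\infty(B' \times B'')$ indexed by $p,i,j,k$, and likewise for $g_{ij}$ and $g^{ij}$ themselves.

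The first step is to decompose $\kappa_p: B' \times B'' \to U_p$ into three ODE flows attached to $\rnabla$. Under the identification $B' \times B'' \cong T_pM$ via orthonormal horizontal/vertical frames, the formula
\[
\kappa_p(X) = \rexp_{\rexp_p(\bfH X)}\bigl(\rP_{\bfH X}\,\bfV X\bigr)
\]
corresponds to (a) solving the $\rnabla$-geodesic equation on $[0,1]$ with initial data $(p,\bfH X)$; (b) solving the linear parallel-transport ODE for $\rnabla$ along this geodesic, applied to $\bfV X$; (c) solving a second $\rnabla$-geodesic equation on $[0,1]$ with the transported vector as initial velocity. When each of these ODEs is expressed in a chart $x_q$, its coefficients are polynomials in $\rGamma^k_{ij}\circ x_q^{-1}$, hence lie in a bounded subset of $C^\infty(B'\times B'')$ uniformly in $q$.

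Next I would invoke the standard theorem on smooth dependence of ODE solutions on initial conditions and parameters in its quantitative form from \cite{Hartman1964}, exactly as the authors use in the proof of Proposition~\ref{p: X is C^infty-uniformly bd <=> phi^t is of R-local bd geom}: uniform $C^m$-bounds on the right-hand side yield uniform $C^m$-bounds on the time-$1$ flow with respect to initial data. Fix $p,q\in M$ with $U_p\cap U_q\ne\emptyset$. Read the three composed flows above in the chart $x_q$, starting from the point $x_q(p)\in B'\times B''$ with initial velocities obtained from $X$ by multiplication with the frame-transition matrix at $p$; that transition matrix is given by $g_{ij}\circ x_q^{-1}$ evaluated at $x_q(p)$, which is uniformly controlled. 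Composing the three bounds then gives uniform $C^m$-estimates on $x_q \circ \kappa_p = x_q x_p^{-1}$ on its domain of definition, for every $m$.

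The main technical point, and the step I would take most care with, is ensuring that the flow trajectories stay inside $U_q$ so that the $x_q$-chart expression makes sense throughout the integration, and that the frame comparison at $p$ is itself uniform. For the former one uses Proposition~\ref{p: B(p,r) subset U_p} together with the fact that the ODE trajectories have length bounded by $\max(r'_0,r''_0)$, so they remain in a controlled neighborhood of $p$ (shrinking $r'_0,r''_0$ a priori if necessary). For the latter one observes that the orthonormal horizontal/vertical frame at $p$ used to identify $T_pM$ with $\R^{n'}\times\R^{n''}$ has, by Theorem~\ref{t: foln of bd geometry}, components in the $x_q$-coordinate basis that are smooth functions of $g_{ij}\circ x_q^{-1}$ at $x_q(p)$, hence uniformly bounded in $p,q$.
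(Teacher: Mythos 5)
Since the paper only cites this result from \cite[Proposition~8.7]{AlvKordyLeichtnam2014} and does not reproduce a proof, there is no in-paper argument to compare against; what I can assess is whether your ODE-based argument is sound on its own. The overall plan is the right one: the composite $x_q\circ\kappa_p$ is exactly a concatenation of a $\rnabla$-geodesic flow, a $\rnabla$-parallel-transport flow, and a second $\rnabla$-geodesic flow read in the target chart, and the observation made in the paper after Theorem~\ref{t: foln of bd geometry} (that the $\rGamma^k_{ij}$ of $\rnabla$ are uniformly bounded in $C^\infty(B'\times B'')$) is exactly what feeds Hartman-type quantitative dependence on initial data. Your treatment of the frame transition at $p$ is also essentially correct: the orthonormal horizontal/vertical frame has coordinate components controlled by $g_{ij},g^{ij}$ in the $x_q$-chart, so it contributes only a uniformly bounded constant matrix, not extra derivatives.

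The genuine gap is in the domain bookkeeping. You start the flows at $x_q(p)$, which presupposes $p\in U_q$; but $U_p\cap U_q\neq\emptyset$ does \emph{not} imply $p\in U_q$ — the two charts can overlap in a sliver far from both centers. Your later discussion only addresses whether the trajectory stays in a controlled neighborhood of $p$; it does not explain why that neighborhood is contained in $U_q$. The ``shrink $r'_0,r''_0$ a priori'' remark as stated does not repair this, since shrinking the charts also shrinks both $\operatorname{diam}(U_p)$ and the separation bound $d(p,q)\lesssim\operatorname{diam}(U_p)+\operatorname{diam}(U_q)$ at the same rate, so the relative geometry is unchanged. The correct way to make the shrinking idea work is asymmetric: restrict the \emph{domain} of the coordinate change to $x_p(U^\epsilon_p\cap U^\epsilon_q)$ with $U^\epsilon_p=x_p^{-1}(B'_{\epsilon r'_0}\times B''_{\epsilon r''_0})$, but keep reading the ODEs in the \emph{full} chart $x_q:U_q\to B'\times B''$, and use Proposition~\ref{p: B(p,r) subset U_p} for the original (unshrunk) radius $r$. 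Choosing $\epsilon$ so that $3C\epsilon(r'_0+r''_0)<r$ (with $C$ coming from the uniform metric bounds) then guarantees $d(z,q)<r$ for every trajectory point $z$, hence $z\in B(q,r)\subset U_q$, including $z=p$. This yields the conclusion for the shrunken charts, which are again admissible normal foliated charts, and either this is what the cited proposition intends or one finishes by a chaining argument through a uniformly bounded number of intermediate centers; either way, that step needs to be made explicit.
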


For $0<r'\le r'_0$ and $0<r''\le r''_0$, let $B'_{r'}$ and $B''_{r''}$ denote the balls in $\R^{n'}$ and $\R^{n''}$ centered at the origin with radii $r'$ and $r''$, respectively, and set $U_{p,r',r''}=x_p^{-1}(B'_{r'}\times B''_{r''})$.

\begin{prop}[{\'Alvarez-Kordyukov-Leichtnam \cite[Proposition~8.8]{AlvKordyLeichtnam2014}}]\label{p: p_k, N, f_k}
Let $r',r''>0$ with $2r'\le r'_0$ and $2r''\le r''_0$. Then there is a collection of points $p_k$ in $M$, and there is some $N\in\N$ such that the sets $U_{p_k,r',r''}$ cover $M$, and every intersection of $N+1$ sets $U_{p_k,2r',2r''}$ is empty. Moreover there is a partition of unity $\{f_k\}$ subordinated to the open covering $\{U_{p_k,2r',2r''}\}$, which is bounded in the Fr\'echet space $\Cinftyub(M)$.
\end{prop}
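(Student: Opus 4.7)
The plan is to adapt the proof of Proposition~\ref{p: p_k} (Shubin's construction for manifolds of bounded geometry) to the foliated setting, using the uniform chart-transition bounds of Proposition~\ref{p: changes of normal foliated coordinates} and the chart characterization of Theorem~\ref{t: foln of bd geometry} to replace metric balls by the foliated neighborhoods $U_{p,r',r''}$ in a controlled way.

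First I would use Theorem~\ref{t: foln of bd geometry} to observe that, when read in any chart $x_p$, the bundle-like metric $g$ is uniformly equivalent on $B'\times B''$ to the Euclidean metric, uniformly in $p\in M$. This yields constants $\rho, R>0$ such that
\[
B(p,\rho)\subset U_{p,r',r''}\;,\qquad U_{p,2r',2r''}\subset B(p,R)\;,
\]
for all $p\in M$. Next, choose $\{p_k\}\subset M$ maximal (by Zorn's lemma) with the property that the metric balls $B(p_k,\rho/2)$ are pairwise disjoint. By maximality, every $q\in M$ lies in some $B(p_k,\rho)\subset U_{p_k,r',r''}$, which gives the covering. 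For the bounded multiplicity, if $q\in U_{p_k,2r',2r''}$ then $p_k\in B(q,R)$; the disjoint balls $B(p_k,\rho/2)$ all sit inside $B(q,R+\rho/2)$, and uniform two-sided volume bounds on such balls (available because $M$ itself is of bounded geometry, as recalled at the start of this section) cap the number of such $p_k$ by a universal $N\in\N$. Hence any intersection of $N+1$ sets $U_{p_k,2r',2r''}$ is empty.

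For the partition of unity, fix once and for all a bump $\phi\in\Cinftyc(\R^{n'}\times\R^{n''})$ with $0\le\phi\le 1$, $\phi\equiv 1$ on $B'_{r'}\times B''_{r''}$ and $\supp\phi\subset B'_{2r'}\times B''_{2r''}$, and set $\tilde f_k:=\phi\circ x_{p_k}$ on $U_{p_k}$, extended by zero. Each $\tilde f_k\in\Cinftyc(M)$ is supported in $U_{p_k,2r',2r''}$ and identically $1$ on $U_{p_k,r',r''}$. To see that the family $\{\tilde f_k\}$ lies in a bounded subset of $\Cinftyub(M)$, I would read $\tilde f_k$ in any chart $x_q$ as $\phi\circ(x_{p_k}x_q^{-1})$; the derivatives of $x_{p_k}x_q^{-1}$ of all orders are uniformly bounded by Proposition~\ref{p: changes of normal foliated coordinates}, and $\phi$ is a fixed compactly supported smooth function, so the Fa\`a di Bruno expansion of $\partial_I(\tilde f_k\circ x_q^{-1})$ is uniformly bounded in $k$ and $q$. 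Since $\{U_{p_k,r',r''}\}$ covers $M$ and the doubled covering has multiplicity at most $N$, the sum $F:=\sum_k\tilde f_k$ is locally finite with at most $N$ nonzero terms at each point, satisfies $1\le F\le N$, and is bounded in $\Cinftyub(M)$ by the same chart-by-chart argument. Setting $f_k:=\tilde f_k/F$ then gives the desired partition of unity, bounded in $\Cinftyub(M)$ via the chain rule and composition with the smooth function $t\mapsto 1/t$ on the interval $[1,N]$.

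The main obstacle is precisely the uniform $\Cinftyub$-bound on $\{f_k\}$, which relies on the combination of Proposition~\ref{p: changes of normal foliated coordinates} (giving chart-to-chart uniform control of the bump functions $\tilde f_k$) with the finite multiplicity $N$ (keeping $F$ and $1/F$ uniformly $C^\infty$-bounded). No genuinely new analytic ideas appear beyond these two inputs; the proof is essentially Shubin's packing-plus-partition construction carried out inside the foliated charts.
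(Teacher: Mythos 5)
The paper does not prove this proposition itself; it is quoted verbatim from \cite[Proposition~8.8]{AlvKordyLeichtnam2014}, so there is no in-paper proof to compare against. Your reconstruction is correct and is exactly the Shubin packing-plus-partition argument that the cited result is built on, transported to the foliated charts: uniform two-sided bounds on $g_{ij},g^{ij}$ from Theorem~\ref{t: foln of bd geometry} give inclusions $B(p,\rho)\subset U_{p,r',r''}\subset U_{p,2r',2r''}\subset B(p,R)$ with $\rho,R$ independent of $p$; a maximal $\rho/2$-separated set of centers together with volume comparison (using bounded geometry of $M$) yields both the covering and the uniform multiplicity $N$; and Proposition~\ref{p: changes of normal foliated coordinates} plus the fact that $\{(U_p,x_p)\}$ may be used interchangeably with $\{(V_p,y_p)\}$ to define $\Cinftyub(M)$ makes the pulled-back bump functions $\tilde f_k$, their sum $F\in[1,N]$, and hence $f_k=\tilde f_k/F$ uniformly bounded in $\Cinftyub(M)$. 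One small step you compress is the first inclusion: one needs $\rho$ below the radius $r$ of Proposition~\ref{p: B(p,r) subset U_p} so that $M$-geodesics of length $<\rho$ stay in $U_p$, after which the chart comparison applies — worth a line, but not a gap.
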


Let $y_p:V_p\to B$ be normal coordinates satisfying the statement of Theorem~\ref{t: mfd of bd geom}. The radius of $B$ is denoted by $r_0$. According to Proposition~\ref{p: B(p,r) subset U_p}, we can assume that $V_p\subset U_p$ for all $p$.

\begin{prop}\label{p: changes of normal coordinates to normal foliated coordinates}
The functions $x_py_p^{-1}$, for $p\in M$, define a bounded subset of the Fr\'echet space $C^\infty(B,\R^{n'}\times\R^{n''})$.
\end{prop}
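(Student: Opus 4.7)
The plan is to realize $x_py_p^{-1}$ as the time-$1$ map of the geodesic flow of $\nabla$ written in the $x_p$-coordinates, and then to invoke the standard uniform smooth dependence of ODE solutions on initial data when the coefficients form a bounded family in $C^\infty$. First I would rewrite $y_p^{-1}(v)=\exp_p(v^ie_i)$, where $\exp_p$ is the $\nabla$-exponential map and $e_1,\dots,e_n$ is the orthonormal frame at $p$ used to define $y_p$. For every $v\in B$ and $t\in[0,1]$, the point $\exp_p(tv^ie_i)$ lies in $V_p\subset U_p$, so $\gamma(t;v):=x_p(\exp_p(tv^ie_i))$ takes values in $B'\times B''$. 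Writing the geodesic equation in the coordinates $x_p$ gives
\begin{equation*}
\ddot\gamma^k+\Gamma^k_{ij}(\gamma)\dot\gamma^i\dot\gamma^j=0\;,\quad\gamma(0;v)=0\;,\quad\dot\gamma(0;v)=A_pv\;,
\end{equation*}
where the $\Gamma^k_{ij}$ are the Christoffel symbols of $\nabla$ in the $x_p$-coordinates, and $A_p\in\operatorname{O}(n)$ is the orthogonal change of basis from the frame $e_i$ to the adapted horizontal/vertical orthonormal frame at $p$ underlying the construction of $\kappa_p=x_p^{-1}$. Hence $x_py_p^{-1}(v)=\gamma(1;v)$.

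Next I would verify the uniformity in $p$ of the data of this ODE. By Theorem~\ref{t: foln of bd geometry}, the coefficients $g_{ij}$ and $g^{ij}$ in the $x_p$-coordinates form a bounded subset of $C^\infty(B'\times B'')$ as $p$ varies; since the $\Gamma^k_{ij}$ are polynomials in $g^{ij}$ and $\partial_lg_{ij}$, the same holds for them. The initial velocities $A_pv$ stay in a bounded set of $\R^n$ for $v\in B$ because $\operatorname{O}(n)$ is compact.

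Finally I would invoke the standard ODE principle: if $\{F_p\}$ is a family of vector fields on $\R^{2n}$ whose restrictions to compact sets form a bounded subset of the Fr\'echet space $C^\infty$, then the time-$1$ flow, viewed as a function of the initial data, forms a bounded family in $C^\infty$ on any fixed bounded set of initial data as $p$ varies. Applied to the first-order reduction of the geodesic equation above, with state $(\gamma,\dot\gamma)\in\R^{2n}$ and initial data $(0,A_pv)$, this yields that $\gamma(1;\cdot)=x_py_p^{-1}$ is uniformly bounded in $C^\infty(B,\R^{n'}\times\R^{n''})$. The only genuine technical content is this ODE principle itself, proved by induction on the order of differentiation in $v$: the variational equations satisfied by each $v$-derivative of $\gamma$ are linear with coefficients polynomial in lower-order derivatives of $\gamma$ and of $F_p$, and iterated Gronwall estimates on $[0,1]$ propagate the uniform $C^\infty$-bounds on the data into uniform $C^\infty$-bounds on $\gamma(1;\cdot)$, in the spirit of the argument used for Proposition~\ref{p: X is C^infty-uniformly bd <=> phi^t is of R-local bd geom}.
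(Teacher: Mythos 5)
Your proof is correct, but it takes a genuinely different route from the paper's. The paper's argument is indirect and coordinate-free in spirit: by Theorem~\ref{t: mfd of bd geom}, boundedness of $\{x_py_p^{-1}\}$ in $C^\infty(B)$ is equivalent to uniform boundedness of $|\nabla^m x_p|$ on $V_p$ for all $m$; by~\eqref{nabla-rnabla} together with the uniform boundedness of $\sT$, $\sA$ and their covariant derivatives, $\nabla$ can be replaced by $\rnabla$ in this criterion; and finally Theorem~\ref{t: foln of bd geometry} (boundedness of $g_{ij}$, $g^{ij}$, hence of $\rGamma^k_{ij}$, in the $x_p$-charts) makes $|\rnabla^m x_p|$ uniformly bounded because $x_px_p^{-1}=\id$ is trivially a bounded family. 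Your argument is instead a direct, constructive one: you realize $x_py_p^{-1}$ as the time-$1$ map of the geodesic spray of $\nabla$ written in the $x_p$-coordinates (where the Christoffel symbols of $\nabla$ are uniformly bounded by Theorem~\ref{t: foln of bd geometry}), and then propagate uniform $C^\infty$-bounds through the ODE via variational equations and Gronwall, in the spirit of the paper's proof of Proposition~\ref{p: X is C^infty-uniformly bd <=> phi^t is of R-local bd geom}. Your route is more hands-on and self-contained (it does not invoke the covariant-derivative characterization of $\Cinftyub$ or the tensorial comparison of $\nabla$ and $\rnabla$), while the paper's route is shorter because it reuses the machinery already set up around Definition~\ref{d: bd geometry} and Theorems~\ref{t: mfd of bd geom} and~\ref{t: foln of bd geometry}. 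The only points you should make explicit to be fully rigorous are (i) the solution component $\gamma(t;v)$ stays in $B'\times B''$ for $t\in[0,1]$ because $V_p\subset U_p$ (which the paper arranges after Proposition~\ref{p: B(p,r) subset U_p}), so the uniform bounds on the coefficients are available along the whole trajectory, and (ii) the velocity component $\dot\gamma$ is also confined to a $p$-independent compact set, which follows from the uniform equivalence of $g$ with the Euclidean metric in the $x_p$-coordinates.
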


\begin{proof}
By Theorem~\ref{t: mfd of bd geom}, the statement is equivalent to requiring that, for all $m\in\N_0$, the functions $|\nabla^mx_p|$ are bounded on $V_p$, uniformly on $p\in M$. By~\eqref{nabla-rnabla}, this in turn is equivalent to requiring that the functions $|\rnabla^mx_p|$ are bounded on $V_p$, uniformly on $p\in M$. But this follows from Theorem~\ref{t: foln of bd geometry}, since the functions $x_px_p^{-1}=\id_{B'\times B''}$ obviously define a bounded subset of the Fr\'echet space $C^\infty(B'\times B'',\R^{n'}\times\R^{n''})$.
\end{proof}

Take $0<r'<r'_0$ and $0<r''<r''_0$ such that $r'+r''<r_0$. Then $U_{p,r',r''}\subset V_p$ for all $p\in M$ by the triangle inequality. The proof of the following result is similar to the proof of Proposition~\ref{p: changes of normal coordinates to normal foliated coordinates}.

\begin{prop}\label{p: changes of normal foliated coordinates to normal coordinates}
The functions $y_px_p^{-1}$, for $p\in M$, define a bounded subset of the Fr\'echet space $C^\infty(B'_{r'}\times B''_{r''},\R^n)$.
\end{prop}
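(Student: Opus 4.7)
The plan is to follow the three-step strategy of Proposition~\ref{p: changes of normal coordinates to normal foliated coordinates}, but swapping the roles of the normal chart $y_p$ and the normal foliated chart $x_p$. The goal is to translate the boundedness of $y_px_p^{-1}$ into a uniform bound on $|\nabla^m y_p|$, viewing $y_p$ as a vector-valued function on $V_p\supset U_{p,r',r''}$, and then to observe that $|\nabla^m y_p|$ can be read off from the trivially bounded family $y_py_p^{-1}=\id_B$.

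First, I would invoke Theorem~\ref{t: foln of bd geometry} together with the remark immediately following it, that the Christoffel symbols $\rGamma^k_{ij}$ of $\rnabla$ in the foliated charts $x_p$ lie in a bounded set of $C^\infty(B'\times B'')$. Combined with the standard coordinate formula for $\rnabla^m$ applied to a smooth map, this equivalence reduces the claim to requiring that, for all $m\in\N_0$, the functions $|\rnabla^m y_p|$ are uniformly bounded on $U_{p,r',r''}$, uniformly in $p\in M$. The inclusion $U_{p,r',r''}\subset V_p$ secured before this proposition is what lets $y_p$ play the role of a vector-valued function there.

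Second, I would pass from $\rnabla$ to $\nabla$ using~\eqref{nabla-rnabla}. The difference between these two connections is expressed through the O'Neill tensors $\sT$ and $\sA$, and by Definition~\ref{d: bd geometry} all $|\nabla^m\sT|$ and $|\nabla^m\sA|$ are uniformly bounded on $M$. A straightforward induction on $m$ then shows that the uniform boundedness of $|\rnabla^m y_p|$ is equivalent to the uniform boundedness of $|\nabla^m y_p|$ on $U_{p,r',r''}$.

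Finally, I would apply Theorem~\ref{t: mfd of bd geom} to transfer this statement back to the normal charts $y_p$: uniform boundedness of $|\nabla^m y_p|$ for all $m$ is equivalent to the family of functions $y_py_p^{-1}=\id_B$ lying in a bounded set of $C^\infty(B,\R^n)$, which is trivially true. The only potentially delicate point is verifying that Theorem~\ref{t: foln of bd geometry}, originally stated for the metric coefficients, transfers to a characterization of $C^\infty$-uniform boundedness of arbitrary vector-valued functions via their $\rnabla$-derivatives; but this is precisely what the uniform bounds on the $\rGamma^k_{ij}$ provide, so no new obstacle arises.
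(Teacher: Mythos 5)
Your proposal is correct and follows precisely the same three-step argument as the paper, which explicitly states that the proof of this proposition is similar to that of Proposition~\ref{p: changes of normal coordinates to normal foliated coordinates}, with the roles of the normal charts $y_p$ and the normal foliated charts $x_p$ interchanged. You have correctly identified the role of Theorem~\ref{t: foln of bd geometry} (via the bounded Christoffel symbols $\rGamma^k_{ij}$), the comparison~\eqref{nabla-rnabla} between $\nabla$ and $\rnabla$, and the trivial boundedness of $y_py_p^{-1}=\id_B$ together with Theorem~\ref{t: mfd of bd geom} as the closing step.
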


Let $E$ be the Hermitian vector bundle of bounded geometry associated to the principal $\operatorname{O}(n)$-bundle of orthonormal frames on $M$ and a unitary representation of $\operatorname{O}(n)$ (Example~\ref{ex: bundles of bd geom}~\eqref{i: associated bundle}). Since $\nabla$ on $TM$ is of bounded geometry, it follows from~\eqref{nabla-rnabla} that $\rnabla$ is also of bounded geometry. Thus we get induced connections $\nabla$ and $\rnabla$ of bounded geometry on $E$ (Example~\ref{ex: connections of bd geom}~\eqref{i: connection of the associated bundle}). By~\eqref{nabla-rnabla}, we also get that $\rnabla$ can be used instead of $\nabla$ to define equivalent versions of $\|\cdot\|_{C_{\text{\rm ub}}^m}$ and $\langle\cdot,\cdot\rangle_m$ in the spaces $C_{\text{\rm ub}}^m(M;E)$ and $H^m(M;E)$ (Sections~\ref{ss: uniform sps} and~\ref{ss: Sobolev, bd geom}). By Propositions~\ref{p: changes of normal coordinates to normal foliated coordinates} and~\ref{p: changes of normal foliated coordinates to normal coordinates}, if $B'$ and $B''$ are small enough, then we can use the coordinates $(U_p,x_p)$ instead of $(V_p,y_p)$ to define equivalent versions of $\|\cdot\|'_{C_{\text{\rm ub}}^m}$ and $\langle\cdot,\cdot\rangle'_m$. Similarly, given another bundle $F$ like $E$, we can use the coordinates $(U_p,x_p)$ instead of $(V_p,y_p)$ to describe $\Diffub^m(M;E,F)$ (Section~\ref{ss: diff ops of bd geom}) by requiring that the local coefficients form a bounded subset of the Fr\'echet space $C^\infty(B'\times B'';\C^{l'}\otimes\C^{l*})$, where $l$ and $l'$ are the ranks of $E$ and $F$.

The conditions of being leafwise differential operators and having bounded geometry are preserved by compositions, and by taking transposes and formal adjoints. Moreover
\[
\Diffub(\FF;E,F)=\Diff(\FF;E,F)\cap\Diffub(M;E,F)
\]
is a filtered $\Cinftyub(M)$-submodule of $\Diff(\FF;E,F)$. The notation $\Diffub(\FF;E)$ is used if $E=F$; this is a graded subalgebra of $\Diff(\FF;E)$. The concepts of {\em uniform leafwise ellipticity\/} for operators in $\Diff(\FF;E,F)$, and {\em uniform transverse ellipticity\/} for operators in $\Diff(M;E,F)$, can be defined like uniform ellipticity (Section~\ref{ss: diff ops of bd geom}). If $P\in\Diffub^1(\FF;E)$ is uniformly leafwise elliptic and $Q\in\Diffub^1(M;E)$ is uniformly transversely elliptic, then $H^m(M;E)$ can be described with the scalar product $\langle u,v\rangle_m=\langle(1+P^*P+Q^*Q)^mu,v\rangle$ ($m\in\R$).

The normal foliated coordinates $(U_p,x_p)$ can be used in a standard way to endow $T\FF$ with the structure of a vector bundle of bounded geometry, and let $\fX_{\text{\rm ub}}(\FF)=\Cinftyub(M;T\FF)$, which equals $\fX_{\text{\rm ub}}(M)\cap\fX(\FF)$. On the other hand, let $\fX_{\text{\rm ub}}(M,\FF)=\fX_{\text{\rm ub}}(M)\cap\fX(M,\FF)$.

\section{Operators of bounded geometry on differential forms}\label{s: ops of bd geom on diff forms}

The principal $\operatorname{O}(n)$-bundle $P$ of orthonormal frames of $M$ has a reduction $Q$ with structural group $\operatorname{O}(n')\times\operatorname{O}(n'')\subset\operatorname{O}(n)$, which consists of the frames of the form $(e',e'')$, where $e'$ and $e''$ are orthonormal frames in $\bfH$ and $T\FF$, respectively. Then $\bfH$ and $T\FF$ are associated to $Q$ and the unitary representations of $\operatorname{O}(n')\times\operatorname{O}(n'')$ on $\C^{n'}$ and $\C^{n''}$ induced by the canonical unitary representations of $\operatorname{O}(n')$ and $\operatorname{O}(n'')$. Thus $\bfH$ and $T\FF$ are of bounded geometry (Example~\ref{ex: bundles of bd geom}~\eqref{i: bundle associated to a reduction}). Moreover $\rnabla$ can be restricted to connections on $\bfH$ and $T\FF$, which are of bounded geometry because they are induced by the restriction to $Q$ of the connection on $P$ defined by $\rnabla$ (Example~\ref{ex: connections of bd geom}~\eqref{i: connection of bundle associated to a reduction}). Thus every $\Lambda^{u,v}M$ is of bounded geometry (Example~\ref{ex: bundles of bd geom}~\eqref{i: natural operations}), and the connection $\rnabla$ on $\Lambda^{u,v}M$ is of bounded geometry (Example~\ref{ex: connections of bd geom}~\eqref{i: connections induced by natural operations}).

Consider the induced connections  $\nabla$ and $\rnabla$ of bounded geometry on $\Lambda M$. By using $\rnabla$ instead of $\nabla$ in the definitions of $\|\cdot\|_{C_{\text{\rm ub}}^m}$ and $\langle\cdot,\cdot\rangle_m$, it follows that the spaces $C_{\text{\rm ub}}^m(M;\Lambda)$ and $H^m(M;\Lambda)$ inherit the bigrading of $\Lambda M$, and therefore $\Cinftyub(M;\Lambda)$ and $H^{\pm\infty}(M;\Lambda)$ have an induced bigrading: their terms of bi-degree $(u,v)$ are the uniform and Sobolev spaces for $\Lambda^{u,v}M$. In particular, all of this applies to $\Lambda\FF\equiv\Lambda^{0,\cdot}M$.

\begin{lem}\label{l: the canonical projection to bigwedge^u,vT^*M is of bd geometry}
The canonical projection of $\Lambda M$ to every $\Lambda^{u,v}M$ is of bounded geometry for all $u$ and $v$.
\end{lem}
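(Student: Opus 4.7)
The plan is to reduce the statement to the bounded geometry of $\bfV$ and $\bfH$ as sections of $\End(TM)$, and then build $\pi_{u,v}$ from them by bundle operations that preserve bounded geometry. Since bounded geometry is preserved by duals, tensor products and exterior products (Example~\ref{ex: bundles of bd geom}~\eqref{i: natural operations}), it will suffice to check the claim for the dual projections $\bfV^*,\bfH^*:T^*M\to T^*M$.

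First I would read off the coefficients of $\bfV$ and $\bfH$ in a normal foliated chart from formulas~\eqref{bV} and~\eqref{bH}. These coefficients are universal polynomial expressions in the metric coefficients $g_{ij}$ and $g^{ij}$. By Theorem~\ref{t: foln of bd geometry}, the families $\{g_{ij}\circ x_p^{-1}\}_{p\in M}$ and $\{g^{ij}\circ x_p^{-1}\}_{p\in M}$ lie in bounded subsets of the Fr\'echet space $C^\infty(B'\times B'')$, so the same is true of the coefficients of $\bfV$ and $\bfH$. As explained at the end of Section~\ref{s: Riem folns of bd geometry}, the normal foliated coordinates $(U_p,x_p)$ may be used in place of the $(V_p,y_p)$ to describe the uniform spaces of sections of vector bundles of bounded geometry associated to $Q$; hence $\bfV,\bfH\in\Cinftyub(M;\End(TM))$, and by dualisation $\bfV^*,\bfH^*\in\Cinftyub(M;\End(T^*M))$.

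Now the projection $\pi_{u,v}:\Lambda M\to\Lambda^{u,v}M$ is extracted from the identity
\[
\id_{\Lambda M}=\bigoplus_{u+v=\cdot}\pi_{u,v}
\]
obtained by expanding $(\bfH^*+\bfV^*)^{\wedge r}$ in its bi-homogeneous components. In other words, $\pi_{u,v}$ is a polynomial with integer coefficients in $\bfV^*$ and $\bfH^*$, built using tensor products, exterior powers, and the natural projection $T^*M\otimes T^*M\to T^*M\wedge T^*M$. All of these operations carry $\Cinftyub$-sections of bundles of bounded geometry into $\Cinftyub$-sections of bundles of bounded geometry, so $\pi_{u,v}\in\Cinftyub(M;\End(\Lambda M))$. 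Reinterpreting $\pi_{u,v}$ as a section of $\Lambda^{u,v}M\otimes(\Lambda M)^*$, this says precisely that $\pi_{u,v}\in\Diffub^0(M;\Lambda M,\Lambda^{u,v}M)$, i.e., it is of bounded geometry.

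The only mild obstacle is bookkeeping: one must be sure that the bigrading defined via $\bfH=T\FF^\perp$ is the same as the one coming from the reduction $Q$ used at the start of Section~\ref{s: ops of bd geom on diff forms} to make $\Lambda^{u,v}M$ into a bundle of bounded geometry. This is automatic, since both stem from the orthogonal splitting $TM=T\FF\oplus T\FF^\perp$, so there is nothing new to check beyond the uniform boundedness of $\bfV$ and $\bfH$ derived above.
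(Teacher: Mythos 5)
Your proposal is correct and takes essentially the same approach as the paper: the paper's proof is a one-line appeal to~\eqref{bV},~\eqref{bH} and Theorem~\ref{t: foln of bd geometry}, and your argument simply unpacks that one-liner, making explicit that the coefficients of $\bfV$ and $\bfH$ are polynomial in the $g_{ij}$ and $g^{ij}$, that this gives $\bfV,\bfH\in\Cinftyub(M;\End(TM))$, and that the projections $\pi_{u,v}$ are obtained from $\bfV^*,\bfH^*$ by operations that preserve bounded geometry.
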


\begin{proof}
This follows from~\eqref{bV},~\eqref{bH} and Theorem~\ref{t: foln of bd geometry}.
\end{proof}

Using the decompositions~\eqref{d = d_0,1 + d_1,0 + d_2,-1} and~\eqref{delta = delta_0,-1 + delta_-1,0 + delta_-2,1}, let
\begin{gather*}
D_0=d_{0,1}+\delta_{0,-1}\;,\quad D_\perp=d_{1,0}+\delta_{-1,0}\;,\\
\Delta_0=D_0^2=d_{0,1}\delta_{0,-1}+\delta_{0,-1}d_{0,1}\;.
\end{gather*}
Note that $D_0\in\Diff^1(\FF;\Lambda M)$, $D_\perp\in\Diff^1(M;\Lambda)$ and $\Delta_0\in\Diff^2(\FF;\Lambda M)$.

\begin{cor}\label{c: d_i,j, ... are of bd geom}
  The differential operators $d_{i,j}$, $\delta_{-i,-j}$, $D_0$, $D_\perp$ and $\Delta_0$ are of bounded geometry.
\end{cor}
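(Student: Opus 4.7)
\medskip

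\textbf{Proof plan for Corollary~\ref{c: d_i,j, ... are of bd geom}.} The strategy is to reduce everything to two facts already in hand: that $d$ (and hence $\delta$) is of bounded geometry on $\Lambda M$, and that the canonical projections onto the bigraded pieces $\Lambda^{u,v}M$ are of bounded geometry.

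First I would record that $d\in\Diffub^1(M;\Lambda)$ by Example~\ref{ex: connections of bd geom}\eqref{i: d is of boundedgeometry}, and that formal adjunction (with respect to the Riemannian density and the induced Hermitian structure, both of bounded geometry) preserves $\Diffub$, so $\delta=d^*\in\Diffub^1(M;\Lambda)$ as well. Denote by $\pi^{u,v}:\Lambda M\to\Lambda^{u,v}M$ the canonical projection given by the orthogonal splitting $TM=T\FF\oplus\bfH$. By Lemma~\ref{l: the canonical projection to bigwedge^u,vT^*M is of bd geometry}, each $\pi^{u,v}$ is of bounded geometry, hence can be viewed as an order-zero operator in $\Diffub^0(M;\Lambda,\Lambda^{u,v})$ and, by post-composition with the inclusion, also as an element of $\Diffub^0(M;\Lambda)$.

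Next, the bihomogeneous components can be recovered as the finite sums
\[
d_{i,j}=\sum_{u,v}\pi^{u+i,v+j}\circ d\circ\pi^{u,v}\;,
\]
and analogously for $\delta_{-i,-j}$; here the sums are finite because $(u,v)$ ranges over bi-degrees bounded by $n$. Since $\Diffub$ is closed under composition and addition, each $d_{i,j}$ and each $\delta_{-i,-j}$ lies in $\Diffub(M;\Lambda)$. Alternatively, $\delta_{-i,-j}=d_{i,j}^*$ gives the bounded geometry of $\delta_{-i,-j}$ directly once $d_{i,j}$ is handled, since formal adjunction preserves $\Diffub$. Then $D_\perp=d_{1,0}+\delta_{-1,0}\in\Diffub^1(M;\Lambda)$ follows at once.

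For $D_0$ and $\Delta_0$, I need the extra information that they are \emph{leafwise} operators. This is where Lemma~\ref{l: g bundle-like <=> delta_0,1 equiv delta_FF} enters: since $g$ is bundle-like, $\delta_{0,-1}\equiv\delta_\FF$ via~\eqref{Lambda M}, while $d_{0,1}\equiv d_\FF$ always holds by~\eqref{d_0,1 equiv d_FF}; thus $d_{0,1},\delta_{0,-1}\in\Diff(\FF;\Lambda M)$, and so $D_0\in\Diff^1(\FF;\Lambda M)$ and $\Delta_0=D_0^2\in\Diff^2(\FF;\Lambda M)$. Combining these inclusions with the bounded-geometry statements above and using the identity $\Diffub(\FF;\Lambda M)=\Diff(\FF;\Lambda M)\cap\Diffub(M;\Lambda M)$ stated at the end of Section~\ref{s: Riem folns of bd geometry}, I conclude $D_0\in\Diffub^1(\FF;\Lambda M)$ and $\Delta_0\in\Diffub^2(\FF;\Lambda M)$.

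There is no real obstacle; the only point worth watching is the bundle-like hypothesis on $g$, which is what allows $\delta_{0,-1}$ to be a leafwise operator and is therefore essential for the statements about $D_0$ and $\Delta_0$ (the bounded-geometry statements for $d_{i,j}$ and $\delta_{-i,-j}$ do not need it, only the results of Section~\ref{s: Riem folns of bd geometry} that put $\Lambda M$ into the bounded-geometry framework).
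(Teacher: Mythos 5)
Your argument is correct and follows essentially the same route as the paper's one-line proof: both rest on Lemma~\ref{l: the canonical projection to bigwedge^u,vT^*M is of bd geometry}, the fact that $d$ is of bounded geometry, and closure of $\Diffub$ under composition, addition, and formal adjoints. The explicit formula $d_{i,j}=\sum_{u,v}\pi^{u+i,v+j}\circ d\circ\pi^{u,v}$ and the remark that $D_0,\Delta_0$ land in $\Diffub(\FF;\Lambda M)$ via the intersection $\Diff(\FF;\cdot)\cap\Diffub(M;\cdot)$ simply spell out what the paper leaves implicit.
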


\begin{proof}
This follows from Lemma~\ref{l: the canonical projection to bigwedge^u,vT^*M is of bd geometry} since $d$ is of bounded geometry, and this property is preserved by taking formal adjoints and compositions.
\end{proof}

It is elementary that
\begin{alignat}{2}
\Fsigma(d_{0,1})(p,\xi)&=i{\xi\wedge}\;,&\quad\sigma(d_{-1,0})(p,\zeta)&=i{\zeta\wedge}\;,\label{Fsigma(d_0,1), sigma(d_-1,0)}\\
\Fsigma(\delta_{0,-1})(p,\xi)&=i{\xi\lrcorner}\;,&\quad\sigma(\delta_{0,-1})(p,\zeta)&=i{\zeta\lrcorner}\;,\notag
\end{alignat}
for all $p\in M$, $\xi\in T^*_p\FF$ and $\zeta\in N^*_p\FF$. So
\begin{gather}
\Fsigma(D_0)(p,\xi)=i({\xi\wedge}+{\xi\lrcorner})\;,\quad\Fsigma(\Delta_0)(p,\xi)=|\xi|^2\;,\label{Fsigma(D_0)}\\
\sigma(D_\perp)(p,\zeta)=i({\zeta\wedge}+{\zeta\lrcorner})\;.\notag
\end{gather}
Thus we get the following.

\begin{prop}\label{p: D_0 is uniformly leafwise elliptic}
$D_0$ and $\Delta_0$ are uniformly leafwise elliptic, and $D_\perp$ is uniformly transversely elliptic.
\end{prop}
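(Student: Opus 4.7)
My plan is to read the conclusion directly off of the symbol computations in~\eqref{Fsigma(d_0,1), sigma(d_-1,0)}--\eqref{Fsigma(D_0)} displayed just before the proposition, combined with the standard Clifford identity $\{\xi\wedge,\xi\lrcorner\}=|\xi|^2$. Recall that the definition of uniform leafwise ellipticity for $A\in\Diff^m(\FF;E,F)$ is the analogue of the uniform ellipticity condition in Section~\ref{ss: diff ops of bd geom}: there must exist $C\ge 1$ with $C^{-1}|\xi|^m\le|\Fsigma_m(A)(p,\xi)|\le C|\xi|^m$ for all $p\in M$ and $\xi\in T^*_p\FF$, and similarly for uniform transverse ellipticity with $\zeta\in N^*_p\FF$ in place of $\xi$. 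The upper bounds are automatic: by Corollary~\ref{c: d_i,j, ... are of bd geom}, the operators $d_{0,1},\delta_{0,-1},d_{1,0},\delta_{-1,0}$ all lie in $\Diffub$, so the second inequality in the definition of uniform ellipticity holds as remarked at the end of Section~\ref{ss: diff ops of bd geom}. The content is therefore the uniform lower bound.

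For $D_0$, \eqref{Fsigma(D_0)} gives $\Fsigma_1(D_0)(p,\xi)=i(\xi\wedge+\xi\lrcorner)$ acting on $\Lambda_pM\cong\Lambda_p\FF\otimes\Lambda_p\bfH$ through the first tensor factor (and as the identity on the second). The Clifford relation
\[
(\xi\wedge+\xi\lrcorner)^2=\{\xi\wedge,\xi\lrcorner\}=|\xi|^2\cdot\id_{\Lambda_p\FF}
\]
on $\Lambda_p\FF$ (using that $(\xi\wedge)^2=(\xi\lrcorner)^2=0$) shows that $\Fsigma_1(D_0)(p,\xi)$ is invertible for every $\xi\ne 0$, with operator norm $|\xi|$ and inverse of operator norm $|\xi|^{-1}$. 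Hence $D_0$ is uniformly leafwise elliptic with constant $C=1$.

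For $\Delta_0$, the second relation of~\eqref{Fsigma(D_0)} states that $\Fsigma_2(\Delta_0)(p,\xi)=|\xi|^2\cdot\id$, so uniform leafwise ellipticity holds trivially with $C=1$ (this also follows from $\Delta_0=D_0^2$). For $D_\perp$, the argument for $D_0$ applies verbatim: by~\eqref{Fsigma(D_0)}, $\sigma_1(D_\perp)(p,\zeta)=i(\zeta\wedge+\zeta\lrcorner)$ for $\zeta\in N^*_p\FF$, now acting through the $\Lambda_p\bfH\cong\Lambda_pN\FF$ factor in $\Lambda_pM\cong\Lambda_p\FF\otimes\Lambda_p\bfH$, and the same Clifford identity yields $|\sigma_1(D_\perp)(p,\zeta)|=|\zeta|$ and $|\sigma_1(D_\perp)(p,\zeta)^{-1}|=|\zeta|^{-1}$. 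This gives uniform transverse ellipticity of $D_\perp$ with $C=1$.

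Since everything reduces to quoting the precomputed symbols and invoking the one-line Clifford identity, I do not anticipate any genuine obstacle; the proposition is essentially a corollary of~\eqref{Fsigma(d_0,1), sigma(d_-1,0)}--\eqref{Fsigma(D_0)} and Corollary~\ref{c: d_i,j, ... are of bd geom}.
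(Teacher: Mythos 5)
Your proof is correct and takes exactly the route the paper intends: the paper treats Proposition~\ref{p: D_0 is uniformly leafwise elliptic} as an immediate corollary of the symbol formulas~\eqref{Fsigma(d_0,1), sigma(d_-1,0)}--\eqref{Fsigma(D_0)} (for the lower bound) together with the remark closing Section~\ref{ss: diff ops of bd geom} and Corollary~\ref{c: d_i,j, ... are of bd geom} (for the upper bound), and your write-up simply makes that reasoning explicit via the Clifford identity. One minor sign caveat that does not affect the conclusion: from $\Fsigma(\delta_{0,-1})(p,\xi)=\bigl(\Fsigma(d_{0,1})(p,\xi)\bigr)^*=-i(\xi\wedge)^*=i\,\xi\lrcorner$ and from the stated equality $\Fsigma(\Delta_0)(p,\xi)=|\xi|^2$, the paper's convention is $\xi\lrcorner=-(\xi\wedge)^*=-\iota_{\xi^\sharp}$ (consistent with the definition of ${\theta\lrcorner}$ in Section~\ref{ss: Novikov diff complex}), so the anticommutator is $\{\xi\wedge,\xi\lrcorner\}=-|\xi|^2$ rather than $+|\xi|^2$; the operator $i(\xi\wedge+\xi\lrcorner)$ is still Hermitian with square $|\xi|^2$, so your operator-norm computation and the constant $C=1$ stand.
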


Let us extend the arguments of \cite[Section~3]{AlvKordy2001} to open manifolds using bounded geometry. The expression~\eqref{bV[X,V]} defines a differential operator
\[
\Theta:\fX(\FF)\to C^\infty(M;\bfH^*\otimes T\FF)\;,\quad\Theta_XV=\bfV([X,V])\;,
\]
for $X\in C^\infty(M;\bfH)$ and $V\in\fX(\FF)$. It induces a differential operator
\begin{gather*}
\Theta:C^\infty(M;\Lambda\FF)\to C^\infty(M;\bfH^*\otimes\Lambda\FF)\;,\\
(\Theta_X\alpha)(V_1,\dots,V_r)=X\alpha(V_1,\dots,V_r)-\sum_{j=1}^r\alpha(V_1,\dots,\Theta_XV_j,\dots,V_r)\;.
\end{gather*}
 for $X\in C^\infty(M;\bfH)$, $\alpha\in C^\infty(M;\Lambda^r\FF)$ and $V_j\in\fX(\FF)$. If $X\in\fX(M,\FF)\cap C^\infty(M;\bfH)$, then this expression agrees with the operator $\Theta_X$ of Section~\ref{ss: differential forms}. According to \cite[Lemma~3.3]{AlvKordy2001}, a zero order differential operator
\[
\Xi:C^\infty(M;\Lambda\FF)\to C^\infty(M;\bfH^*\otimes\Lambda\FF)
\]
is locally defined by
\[
\Xi_X=(-1)^{(n''-v)v}[\Theta_X,\star_\FF]\star_\FF
\]
on $C^\infty(M;\Lambda^v\FF)$ for any $X\in C^\infty(M;\bfH)$, where $\star_\FF$ is the local leafwise star operator determined by $g$ and any local orientation of $\FF$. This $\Xi$ can be considered as a vector bundle morphism $\Lambda\FF\to \bfH^*\otimes\Lambda\FF$. By tensoring $\Xi$ with the identity morphism on $\Lambda\bfH$, we get a vector bundle morphism $\Lambda M\to \bfH^*\otimes\Lambda M$ according to~\eqref{Lambda M}, which is also denoted by $\Xi$. On any normal foliated chart $(U,x)$, let $K$ be the endomorphism of $\Lambda U$ given by
\[
K={dx^{\prime i}\wedge}\,\Xi_{\bfH\partial'_i}\;.
\]
This local definition gives rise to a global endomorphism $K$ of $\Lambda M$.

\begin{prop}\label{p: K is of bd geometry}
$K$ is of bounded geometry.
\end{prop}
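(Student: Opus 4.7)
The plan is to work locally in the normal foliated charts $(U_p,x_p)$ supplied by Theorem~\ref{t: foln of bd geometry} and to show that the matrix entries of $K$, expressed in the coordinate frame of $\Lambda M$ induced by $\{dx_p^{\prime i},dx_p^{\prime\prime j}\}$, form a bounded subset of $C^\infty(B'\times B'')$ as $p$ varies. Once this is established, a change of frame from the coordinate frame to a $\rnabla$-parallel orthonormal frame (coming from the reduction $Q$ as in Section~\ref{s: ops of bd geom on diff forms}) has change-of-basis matrix built polynomially from $g_{ij}$ and $g^{ij}$, and is therefore itself uniformly bounded in $C^\infty(B'\times B'')$; this transfers the coordinate-frame bound to the statement that $K$ is $C^\infty$-uniformly bounded as a section of the bounded-geometry bundle $\End\Lambda M$.

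First I would unwind the building blocks of $K$. By~\eqref{bV}--\eqref{bH}, the projections $\bfV$ and $\bfH$ are polynomial in $g_{ij},g^{ij}$, so the components of $\bfH\partial'_i$ in the frame $\{\partial'_k,\partial''_l\}$ lie in a bounded subset of $C^\infty(B'\times B'')$ by Theorem~\ref{t: foln of bd geometry}. For $V=\partial''_j$, the bracket $[\bfH\partial'_i,\partial''_j]$ is computed from first derivatives of these components, and applying $\bfV$ yields $\Theta_{\bfH\partial'_i}\partial''_j$ with coefficients that are universal polynomial expressions in $g_{ij},g^{ij}$ and their first derivatives; extending $\Theta_{\bfH\partial'_i}$ as a derivation to $\Lambda\FF$ preserves this property. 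Hence the matrix entries of $\Theta_{\bfH\partial'_i}$, viewed in the frame $\{dx^{\prime\prime I}\}$ of $\Lambda\FF$, are uniformly bounded in $C^\infty(B'\times B'')$.

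Next, the local leafwise Hodge operator $\star_\FF$ depends only on $g|_{T\FF}$ and a local leafwise orientation; its matrix entries in $\{dx^{\prime\prime I}\}$ are universal smooth expressions in the vertical block of $g_{ij}$ together with $1/\sqrt{\det(g_{ij})_{i,j>n'}}$, hence uniformly bounded in $C^\infty(B'\times B'')$. By \cite[Lemma~3.3]{AlvKordy2001} the commutator $[\Theta_{\bfH\partial'_i},\star_\FF]$ is zero order, so the local formula $\Xi_{\bfH\partial'_i}=(-1)^{(n''-v)v}[\Theta_{\bfH\partial'_i},\star_\FF]\star_\FF$ produces a bundle endomorphism of $\Lambda\FF$ whose matrix entries are again a universal polynomial in $g_{ij},g^{ij}$ and their first derivatives, uniformly bounded in $C^\infty(B'\times B'')$. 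Tensoring with $\id_{\Lambda\bfH}$, wedging with $dx^{\prime i}$, and summing over $i$ yields $K$ as an endomorphism of $\Lambda M$ with coordinate-frame matrix entries in a bounded subset of $C^\infty(B'\times B'')$, which combined with the frame change above gives the claim.

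The main subtle point is the second paragraph: $\Theta_X$ is genuinely first order, so without the cancellation provided by \cite[Lemma~3.3]{AlvKordy2001} one would pick up a differential part in $\Xi_X$ rather than a bundle morphism, and derivatives of $g_{ij}$ of uncontrolled order would appear. The bounded-geometry conclusion hinges on the fact that this cancellation confines the dependence on the metric to $g_{ij}$, $g^{ij}$, and their \emph{first} derivatives, which are precisely the quantities controlled by Theorem~\ref{t: foln of bd geometry}.
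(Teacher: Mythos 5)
Your argument is correct and follows essentially the same route as the paper's proof: compute $\Theta_{\bfH\partial'_i}$, $\star_\FF$, and hence $\Xi_{\bfH\partial'_i}$ explicitly in a normal foliated chart, observe that the resulting coordinate-frame coefficients are universal expressions in $g_{ij}$, $g^{ij}$ and their first partial derivatives, and conclude by Theorem~\ref{t: foln of bd geometry}. The only cosmetic slip is calling the change of basis to an orthonormal frame ``polynomial'' in $g_{ij},g^{ij}$ --- Gram--Schmidt involves square roots, so it is a universal smooth (not polynomial) expression, but it is still uniformly controlled, so the conclusion stands.
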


\begin{proof}
Take a normal foliated chart $(U,x)$. By~\eqref{bH},
\[
\Theta_{\bfH\partial'_i}\partial''_b
=\bfV([\bfH\partial'_i,\partial''_b])
=\bfV([\partial'_i-g_{ik}g^{kj}\partial''_j,\partial''_b])
=\partial''_b(g_{ik}g^{kj})\partial''_j\;,
\]
where $k$ runs in $\{n'+1,\dots,n\}$. Hence
\begin{align*}
(\Theta_{\bfH\partial'_i}dx^{\prime\prime a})(\partial''_b)
&=\bfH\partial'_i(dx^{\prime\prime a}(\partial''_b))
-dx^{\prime\prime a}(\Theta_{\bfH\partial'_i}\partial''_b)\\
&=-\partial''_b(g_{ik}g^{kj})\,dx^{\prime\prime a}(\partial''_j)
=-\partial''_b(g_{ik}g^{k\alpha})\;,
\end{align*}
giving
\[
\Theta_{\bfH\partial'_i}dx^{\prime\prime a}
=-\partial''_b(g_{ik}g^{ka})\,dx^{\prime\prime b}\;.
\]
It follows that $\Theta_{\bfH\partial'_i}dx^{\prime\prime I}=f_{iIK}\,dx^{\prime\prime K}$, where the functions $f_{iIK}$ are universal polynomial expressions of the functions $g_{ab}$ and $g^{ab}$, and their partial derivatives. On the other hand, for any choice of an orientation of $\FF$ on $U$, we have $\star_\FF dx^{\prime\prime I}=h_{IK}\,dx^{\prime\prime K}$, where the functions $h_{IK}$ are universal expressions of the functions $g_{ab}$ and $g^{ab}$. So
\begin{align*}
\Xi_{\bfH\partial'_i}dx^{\prime\prime I}
&=(-1)^{(n''-v)v}[\Theta_{\bfH\partial'_i},\star_\FF]\star_\FF dx^{\prime\prime I}\\
&=\Theta_{\bfH\partial'_i}dx^{\prime\prime I}
-(-1)^{(n''-v)v}\star_\FF\Theta_{\bfH\partial'_i}\star_\FF dx^{\prime\prime I}\\
&=f_{iIK}\,dx^{\prime\prime K}
-(-1)^{(n''-v)v}\star_\FF\Theta_{\bfH\partial'_i}(h_{IL}\,dx^{\prime\prime A})\\
&=f_{iIK}\,dx^{\prime\prime K}
-(-1)^{(n''-v)v}\star_\FF\big((\partial'_i-g_{ik}g^{kj}\partial''_j)(h_{IA})\,dx^{\prime\prime A}\\
&\phantom{=\text{}}\text{}+h_{IA}f_{iAB}\,dx^{\prime\prime B}\big)\\
&=\big(f_{iIK}-(-1)^{(n''-v)v}\big((\partial'_ih_{IA}-g_{ik}g^{kj}\partial''_jh_{IA})h_{AK}\\
&\phantom{=\text{}}\text{}+h_{IA}f_{iAB}h_{BK}\big)\big)\,dx^{\prime\prime K}\;.\qedhere
\end{align*}
\end{proof}

Like in the case of compact manifolds \cite[Proposition~3.1]{AlvKordy2001}, using the local expression $\delta_\FF=(-1)^v\star_\FF d_\FF\,\star_\FF$ on $C^\infty(U;\Lambda^v\FF)$, and~\eqref{d_0,1^2=...=0},~\eqref{d_0,1(f_IJ dx''^I wedge dx'^J)},~\eqref{delta_0,-1^2=...=0} and~\eqref{delta_0,-1(f_IJ dx''^I wedge dx'^J)}, we get
\begin{equation}\label{D_perp D_0 + D_0 D_perp}
D_\perp D_0+D_0D_\perp=KD_0+D_0K\;.
\end{equation}

\section{Leafwise Hodge decomposition}\label{s: Leafwise Hodge}

With the notation of Section~\ref{s: Riem folns of bd geometry}, since $M$ is complete, by~\eqref{Fsigma(D_0)} and the commutativity of~\eqref{CD, Fsigma_m, sigma_m}, given any $\alpha\in\Cinftyc(M;\Lambda)$, the hyperbolic equation
\begin{equation}\label{leafwise wave}
  \partial_t\alpha_t=iD_0\alpha_t\;,\quad\alpha_0=\alpha\;,
\end{equation}
has a unique solution $\alpha_t\in\Cinftyc(M;\Lambda)$ depending smoothly on $t\in\R$ \cite[Theorem~1.3]{Chernoff1973}. The solutions of~\eqref{leafwise wave} defined on any open subset of $M$ and for $t$ in any interval containing zero satisfy (see \cite[Proposition~1.2]{Roe1987})
\begin{equation}\label{leafwise unit propagation speed}
\supp\alpha_t\subset\Pen_\FF(\supp\alpha,|t|)\;.
\end{equation}
This can be proved like~\eqref{unit propagation speed}, or it also follows from~\eqref{unit propagation speed} by taking restrictions to the leaves.

The operators $D_0$ and $\Delta_0$, with domain $\Cinftyc(M;\Lambda)$, are essentially self-adjoint in $L^2(M;\Lambda)$ \cite[Theorem~2.2]{Chernoff1973}, and their self-adjoint extensions are also denoted by $D_0$ and $\Delta_0$. Using the functional calculus of $D_0$ given by the spectral theorem, we get a unitary operator $e^{itD_0}$ and a bounded self-adjoint operator $e^{-t\Delta_0}$ on $L^2(M;\Lambda)$ with $\|e^{-t\Delta_0}\|\le1$. The solution of~\eqref{leafwise wave} is given by $\alpha_t=e^{itD_0}\alpha$. Let $\Pi_0$ (or $e^{-\infty\Delta_0}$) denote the orthogonal projection of $L^2(M;\Lambda)$ to the kernel of $\Delta_0$ in $L^2(M;\Lambda)$.

\begin{prop}\label{p: |e^itD_0 alpha|_m}
For every $m\in\N_0$, there is some $C_m\ge0$ such that, for all $\alpha\in\Cinftyc(M;\Lambda)$ and $t\in\R$,
\[
\|e^{itD_0}\alpha\|_m\le e^{C_m|t|}\|\alpha\|_m\;.
\]
\end{prop}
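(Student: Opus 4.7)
The proof plan is to apply Gronwall's inequality to an energy estimate for the solution $u(t)=e^{itD_0}\alpha$ of~\eqref{leafwise wave}. By the finite propagation speed~\eqref{leafwise unit propagation speed} and Lemma~\ref{l: overline Pen_FF(Q,r) is compact}, the section $u(t)$ stays in $\Cinftyc(M;\Lambda)$ for every $t$, so all manipulations below are justified. The base case $m=0$ is immediate: the essential self-adjointness of $D_0$ makes $e^{itD_0}$ unitary on $L^2(M;\Lambda)$, so $C_0=0$ works.

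For $m\ge 1$, I would work with the Sobolev norm defined by a convenient operator. Take $P=d+\delta\in\Diffub^1(M;\Lambda)$: its bounded geometry comes from Example~\ref{ex: connections of bd geom}\eqref{i: d is of boundedgeometry}, its formal self-adjointness is built in, and it is uniformly elliptic because $\sigma_1(P)(p,\xi)^2=-|\xi|^2\cdot\id$. As explained in Section~\ref{ss: Sobolev, bd geom}, such a $P$ defines an equivalent Sobolev norm through
\[
\|u\|_m^2=\langle(1+P^2)^m u,u\rangle=\sum_{k=0}^m\binom{m}{k}\|P^k u\|_0^2\;.
\]
I would then differentiate each summand along the flow:
\begin{align*}
\frac{d}{dt}\|P^k u\|_0^2
&=2\,\operatorname{Re}\,\langle iP^kD_0u,\,P^ku\rangle\\
&=2\,\operatorname{Re}\,\langle iD_0P^ku,\,P^ku\rangle+2\,\operatorname{Re}\,\langle i[P^k,D_0]u,\,P^ku\rangle\;.
\end{align*}
The first summand vanishes by formal self-adjointness of $D_0$. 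The commutator $[P^k,D_0]$ lies in $\Diffub^k(M;\Lambda)$ (see below), so by Section~\ref{ss: Sobolev, bd geom} it defines a bounded map $H^k(M;\Lambda)\to L^2(M;\Lambda)$; Cauchy--Schwarz and the equivalence of norms then give $|\tfrac{d}{dt}\|P^ku\|_0^2|\le C_k\|u\|_m^2$. Summing over $k\le m$ produces a differential inequality $\tfrac{d}{dt}\|u\|_m^2\le 2C_m\|u\|_m^2$, and Gronwall's lemma yields $\|u(t)\|_m\le e^{C_m|t|}\|\alpha\|_m$.

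The main obstacle is the order count for $[P^k,D_0]$: the naive estimate is order $k+1$ and one must invoke symbol cancellation to reduce it to $k$. The cleanest way is the identity
\[
[P^k,D_0]=\sum_{j=0}^{k-1}P^j[P,D_0]P^{k-1-j}\;,
\]
together with the observation that $[P,D_0]\in\Diffub^1(M;\Lambda)$, since $\Diffub(M;\Lambda)$ is a filtered algebra (Corollary~\ref{c: d_i,j, ... are of bd geom}) and the principal symbols $\sigma_1(P)$ and $\sigma_1(D_0)$ are scalar and hence commute pointwise. Each summand then has order at most $j+1+(k-1-j)=k$, and the whole commutator lies in $\Diffub^k(M;\Lambda)$. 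Once this bookkeeping is in place the argument is the standard energy method, which explains why the exponential bound is a general feature of finite-propagation semigroups on bounded-geometry manifolds.
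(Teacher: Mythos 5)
Your argument has a genuine gap at its central step, the order count for $[P,D_0]$. You claim $[P,D_0]\in\Diffub^1(M;\Lambda)$ because ``the principal symbols $\sigma_1(P)$ and $\sigma_1(D_0)$ are scalar and hence commute pointwise.'' Both halves of that sentence are false: the principal symbol of a Dirac-type operator is the endomorphism $i(\xi\wedge+\xi\lrcorner)=ic(\xi)$ of $\Lambda M$, not a scalar, and for $\xi=\xi_\FF+\xi_\perp$ with $\xi_\FF\in T^*_p\FF$, $\xi_\perp\in\bfH_p^*$ the symbols $\sigma_1(D_0)(p,\xi)=ic(\xi_\FF)$ and $\sigma_1(D_\perp)(p,\xi)=ic(\xi_\perp)$ \emph{anti}commute (Clifford multiplications by orthogonal covectors). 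Writing $P=D_0+D_\perp+Z$ with $Z=d_{2,-1}+\delta_{-2,1}$ of order zero, one finds $\sigma_2([P,D_0])=[\sigma_1(P),\sigma_1(D_0)]=[ic(\xi_\perp),ic(\xi_\FF)]=2c(\xi_\FF)c(\xi_\perp)\ne0$, so $[P,D_0]$ has order two and $[P^k,D_0]$ has order $k+1$, not $k$. The estimate you need, $\bigl|\tfrac{d}{dt}\|P^ku\|_0^2\bigr|\le C_k\|u\|_m^2$, then fails precisely for the top term $k=m$: Cauchy--Schwarz only gives $\|u\|_{m+1}\|u\|_m$, and no rebalancing $[P^m,D_0]=AB$ with $\langle Bu,A^*P^mu\rangle$ bounded by $\|u\|_m^2$ is available since $A$ and $B$ would have to have orders summing to $m+1$ while keeping both $B$ and $A^*P^m$ of order at most $m$.

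The paper's proof sidesteps this by not using the full de~Rham Dirac operator at all. It works with the equivalent Sobolev norm $\|\alpha\|_m=\|\alpha\|+\|D_0^m\alpha\|+\|D_\perp^m\alpha\|$, built from the uniformly leafwise elliptic $D_0$ and the uniformly transversely elliptic $D_\perp$ (Proposition~\ref{p: D_0 is uniformly leafwise elliptic} and the last paragraph of Section~\ref{s: Riem folns of bd geometry}). Then $\|D_0^m\alpha_t\|$ is exactly conserved, and the nontrivial piece is $\tfrac{d}{dt}\|D_\perp^m\alpha_t\|^2=i\langle[\Delta_\perp^m,D_0]\alpha_t,\alpha_t\rangle$. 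The crucial structural input is the identity~\eqref{D_perp D_0 + D_0 D_perp}, $D_\perp D_0+D_0D_\perp=KD_0+D_0K$ with $K$ of order zero and bounded geometry (Proposition~\ref{p: K is of bd geometry}): it yields $[\Delta_\perp^m,D_0]=\sum_{j}\Delta_\perp^{m-1-j}[D_\perp,KD_0+D_0K]\Delta_\perp^j$, whose terms are monomials $D_\perp^aKD_0D_\perp^b$ and $D_\perp^aD_0KD_\perp^b$ of total order $2m$ that can be split in the middle into $P_lQ_l$ with each factor in $\Diffub^m(M;\Lambda)$, giving the balanced Cauchy--Schwarz bound $C_m\|\alpha_t\|_m^2$. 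It is this specific anticommutation identity between $D_\perp$ and $D_0$ that closes the estimate, not a generic symbol cancellation; your proposal misses precisely that ingredient.
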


\begin{proof}
We adapt arguments from \cite[Section~IV.2]{Taylor1981}. The case where $M$ is compact is stated in \cite[Proposition~1.4]{Roe1987} with more generality.

By Proposition~\ref{p: D_0 is uniformly leafwise elliptic}, we can assume that, for all $\alpha\in\Cinftyc(M;\Lambda)$,
\[
\|\alpha\|_m=\|\alpha\|+\|D_0^m\alpha\|+\|D_\perp^m\alpha\|\;.
\]
Writing $\alpha_t=e^{itD_0}\alpha$, we have
\begin{align*}
\frac{d}{dt}\|D_0^m\alpha_t\|^2&=\langle iD_0^{m+1}\alpha_t,D_0^m\alpha_t\rangle+\langle D_0^m\alpha_t,iD_0^{m+1}\alpha_t\rangle=0\;,\\
\frac{d}{dt}\|D_\perp^m\alpha_t\|^2&=\langle iD_\perp^mD_0\alpha_t,D_\perp^m\alpha_t\rangle+\langle D_\perp^m\alpha_t,iD_\perp^mD_0\alpha_t\rangle\\
&=i\langle[\Delta_\perp^m,D_0]\alpha_t,\alpha_t\rangle\;.
\end{align*}
But, by~\eqref{D_perp D_0 + D_0 D_perp},
\begin{align*}
[\Delta_\perp^m,D_0]&=\sum_{j=0}^{m-1}\Delta_\perp^{m-1-j}[D_\perp,D_\perp D_0+D_0D_\perp]\Delta_\perp^j\\
&=\sum_{j=0}^{m-1}\Delta_\perp^{m-1-j}[D_\perp,KD_0+D_0K]\Delta_\perp^j\;.
\end{align*}
This expression can be written as a sum of $4m$ terms, $\sum_lP_lQ_l$, where, up to sign, $P_l$ and $Q_l$ are operators of the one of the following forms: $D_\perp^aKD_0D_\perp^b$, $D_\perp^aD_0KD_\perp^b$ or $D_\perp^m$, for $a,b\in\N_0$ with $a+b+1=m$. In particular, $P_l,Q_l\in \Diffub^m(M;\Lambda)$ by Corollary~\ref{c: d_i,j, ... are of bd geom} and Lemma~\ref{p: K is of bd geometry}. Hence there is some $C_m>0$, independent of $\alpha$, such that
\[
\frac{d}{dt}\|D_\perp^m\alpha_t\|^2\le\sum_l|\langle Q_l\alpha_t,P_l^*\alpha\rangle|\le\sum_l\|Q_l\alpha_t\|\cdot\|P_l^*\alpha\|\le C_m\|\alpha_t\|_m^2\;.
\]
Therefore
\[
\frac{d}{dt}\|\alpha_t\|_m^2\le C_m\|\alpha_t\|_m^2\;,
\]
and the result follows by using Gronwall's inequality.
\end{proof}

Recall also that the Schwartz space $\SS=\SS(\R)$ is the Fr\'echet space of functions $\psi\in C^\infty(\R)$ such that $\psi^{(m)}\in\RR$ for all $m\in\N_0$, with the semi-norms defined by applying the semi-norms of $\RR$ to derivatives of arbitrary order.  Let $\AA$ denote the Fr\'echet algebra and $\C[z]$-module of functions $\psi:\R\to\C$ that can be extended to entire functions on $\C$ such that, for every compact $K\subset\R$, the set $\{\,x\mapsto \psi(x+iy)\mid y\in K\,\}$ is bounded in $\SS$ \cite[Section~4]{Roe1987}. It contains all functions with compactly supported smooth Fourier transform, as well as the Gaussian $x\mapsto e^{-x^2}$. Furthermore, if $\psi\in\AA$ and $u>0$, then $\psi_u\in\AA$, where $\psi_u(x)=\psi(ux)$. By the Paley-Wiener theorem, for every $\psi\in\AA$ and $c>0$, there is some $A_c>0$ such that, for all $\xi\in\R$,
\begin{equation}\label{|hat psi(xi)| le A_c e^-c |xi|}
\big|\hat\psi(\xi)\big|\le A_ce^{-c|\xi|}\;.
\end{equation}
Semi-norms on $\AA$, $\|\cdot\|_{\AA,C,r}$ ($C>0$ and $r\in\N_0$), can be defined by
\[
\|\psi\|_{\AA,C,r}=\max_{j+k\le r} \int_\infty^\infty|\xi^j\partial^k_{\xi}\hat{\psi}(\xi)|\,e^{C|\xi|}\,d\xi\;.
\]

\begin{prop}\label{p: functional calculus}
  The functional calculus map, $\psi\mapsto \psi(D_0)$, restricts to a continuous homomorphism $\AA\to\End(H^\infty(M;\Lambda))$ of $\C[z]$-modules and algebras.
\end{prop}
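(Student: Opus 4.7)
The plan is to represent $\psi(D_0)$ by the oscillatory formula coming from Fourier inversion, as already used in the Novikov setting in~\eqref{psi(D_z)}: for $\psi\in\AA$,
\[
\psi(D_0)=\frac{1}{2\pi}\int_{-\infty}^\infty e^{i\xi D_0}\,\hat\psi(\xi)\,d\xi\;,
\]
first as an identity of bounded operators on $L^2(M;\Lambda)$ (via the spectral theorem, using that $D_0$ is essentially self-adjoint), and then upgrade it to a Bochner integral in $L(H^m(M;\Lambda))$ for each $m\in\N_0$.

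The upgrade is where the two main inputs combine. On the one hand, by Proposition~\ref{p: |e^itD_0 alpha|_m}, the unitary group $e^{i\xi D_0}$ restricts to a bounded operator on $H^m(M;\Lambda)$ with operator norm at most $e^{C_m|\xi|}$. On the other hand, for $\psi\in\AA$, the Paley--Wiener estimate~\eqref{|hat psi(xi)| le A_c e^-c |xi|} gives, for any $c>C_m$, a constant $A_c>0$ with $|\hat\psi(\xi)|\le A_c e^{-c|\xi|}$. Hence the integrand $\xi\mapsto e^{i\xi D_0}\hat\psi(\xi)$ is strongly measurable and norm-integrable as an $L(H^m(M;\Lambda))$-valued function, and
\[
\|\psi(D_0)\alpha\|_m\le\frac{1}{2\pi}\int_{-\infty}^\infty e^{C_m|\xi|}\,|\hat\psi(\xi)|\,d\xi\cdot\|\alpha\|_m\;,
\]
for all $\alpha\in\Cinftyc(M;\Lambda)$, hence for all $\alpha\in H^m(M;\Lambda)$ by density. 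Since the right-hand integral is dominated by a semi-norm $\|\psi\|_{\AA,C,r}$ (for $C>C_m$ and suitable $r$), we obtain a continuous linear map $\AA\to L(H^m(M;\Lambda))$ for every $m$. Taking the projective limit over $m$ yields the continuous map $\AA\to\End(H^\infty(M;\Lambda))$.

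The homomorphism properties are essentially formal once the mapping property is established. Multiplicativity $(\psi_1\psi_2)(D_0)=\psi_1(D_0)\psi_2(D_0)$ and $\C$-linearity already hold on $L^2(M;\Lambda)$ by the spectral theorem, and they pass to $H^\infty(M;\Lambda)$ since the latter sits continuously inside $L^2$ and all operators involved preserve it. The $\C[z]$-module structure is the statement that for any polynomial $p$ and $\psi\in\AA$ one has $(p\psi)(D_0)=p(D_0)\psi(D_0)$; here $p(D_0)\in\Diffub(M;\Lambda)$ by Corollary~\ref{c: d_i,j, ... are of bd geom}, so $p(D_0)$ already acts continuously on $H^\infty(M;\Lambda)$ by Section~\ref{ss: Sobolev, bd geom}, and the identity again follows from the spectral theorem on $L^2$ together with the density of $H^\infty$ in $L^2$.

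The only genuine obstacle is the Sobolev-level bound on $e^{i\xi D_0}$: once Proposition~\ref{p: |e^itD_0 alpha|_m} is in hand, everything else is a routine combination of Paley--Wiener and Bochner integration. A minor point that needs care is justifying that the Bochner integral in $L(H^m(M;\Lambda))$ really represents the same operator as the $L^2$ functional calculus $\psi(D_0)$; this follows by testing against $\alpha\in\Cinftyc(M;\Lambda)$, where both sides agree in $L^2(M;\Lambda)$ and the Bochner integral on the left already takes values in $H^m(M;\Lambda)$.
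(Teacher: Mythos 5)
Your proof is correct and follows essentially the same route as the paper: Fourier inversion to write $\psi(D_0)=\frac{1}{2\pi}\int_{-\infty}^\infty\hat\psi(\xi)e^{i\xi D_0}\,d\xi$, the Sobolev growth bound on $e^{i\xi D_0}$ from Proposition~\ref{p: |e^itD_0 alpha|_m}, and Paley--Wiener to make the integral converge and yield the semi-norm estimate. The only difference is that you spell out the algebra and $\C[z]$-module homomorphism properties and the Bochner-vs-spectral-calculus matching, which the paper leaves implicit (citing Roe's Proposition~4.1) — a reasonable bit of added detail, not a different argument.
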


\begin{proof}
This follows like in the case where $M$ is compact \cite[Proposition~4.1]{Roe1987}. Precisely, for every $\psi\in\AA$, it follows from the inverse Fourier transform that
\begin{equation}\label{psi(D_0)}
\psi(D_0)=\frac{1}{2\pi}\int_{-\infty}^\infty\hat\psi(\xi)e^{i\xi D_0}\,d\xi\;.
\end{equation}
So, by Proposition~\ref{p: |e^itD_0 alpha|_m} and~\eqref{|hat psi(xi)| le A_c e^-c |xi|}, $\psi(D_0)$ defines an endomorphism of every $H^m(M;\Lambda)$ with
\begin{align}
\|\psi(D_0)\|_m&\le\frac{1}{2\pi}\int_{-\infty}^\infty|\hat\psi(\xi)|\,e^{C_m|\xi|}\,d\xi\label{|psi(D_0)|_m}\\
&\le\frac{A_k}{2\pi}\int_{-\infty}^\infty e^{(C_m-k)|\xi|}\,d\xi\;,\notag
\end{align}
which is finite for $k>C_m$.
\end{proof}

According to Proposition~\ref{p: functional calculus}, the operator $e^{-t\Delta_0}$ ($t>0$) restricts to a continuous endomorphism of $H^\infty(M;\Lambda)$. As pointed out in \cite{Sanguiao2008}, by using~\eqref{D_perp D_0 + D_0 D_perp}, Corollary~\ref{c: d_i,j, ... are of bd geom}, and Propositions~\ref{p: D_0 is uniformly leafwise elliptic},~\ref{p: K is of bd geometry} and~\ref{p: functional calculus}, the arguments of the proof of \cite[Theorem~A]{AlvKordy2001} can be adapted to show the following result, where $\Delta_0$ is considered on $H^\infty(M;\Lambda)$.

\begin{thm}[Sanguiao \cite{Sanguiao2008}]\label{t: leafwise Hodge decomposition}
There is a topological direct sum decomposition, 
\begin{equation}\label{leafwise Hodge decomposition}
H^\infty(M;\Lambda)=\ker\Delta_0\oplus\overline{\im d_{0,1}}\oplus\overline{\im\delta_{0,-1}}\;.
\end{equation}
Moreover $(t,\alpha)\mapsto e^{-t\Delta_0}\alpha$ defines a continuous map
\[
[0,\infty]\times H^\infty(M;\Lambda)\to H^\infty(M;\Lambda)\;.
\]
\end{thm}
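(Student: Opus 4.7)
The plan is to adapt the proof of \cite[Theorem~A]{AlvKordy2001} to the bounded-geometry setting, using the tools of Sections~\ref{s: Riem folns of bd geometry} and~\ref{s: ops of bd geom on diff forms}, with the anticommutator identity~\eqref{D_perp D_0 + D_0 D_perp} playing the central role. I would proceed in three stages: an $L^2$ Hodge decomposition, a uniform $H^m$ bound for the heat semigroup, and promotion of the decomposition to $H^\infty$.

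First I would establish the $L^2$ decomposition. Lemma~\ref{l: g bundle-like <=> delta_0,1 equiv delta_FF} together with the bundle-like hypothesis gives $\delta_{0,-1}=d_{0,1}^*$; essential self-adjointness of $D_0$ on the complete manifold $M$ \cite[Theorem~2.2]{Chernoff1973}, combined with $D_0^2=\Delta_0$, then yields $\ker D_0=\ker\Delta_0$ and $L^2(M;\Lambda)=\ker\Delta_0\oplus\overline{\im D_0}$, and the nilpotency identities~\eqref{d_0,1^2=...=0} and~\eqref{delta_0,-1^2=...=0} together with the adjoint relation split $\overline{\im D_0}=\overline{\im d_{0,1}}\oplus\overline{\im\delta_{0,-1}}$ orthogonally.

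The core technical step is the uniform $H^m$ bound on $e^{-t\Delta_0}$. Identity~\eqref{D_perp D_0 + D_0 D_perp} rewrites as $\{D_\perp-K,D_0\}=0$, so $D_\perp-K$ commutes with $\Delta_0=D_0^2$. Setting $Q:=(D_\perp-K)+(D_\perp-K)^*$, Corollary~\ref{c: d_i,j, ... are of bd geom} and Proposition~\ref{p: K is of bd geometry} show $Q\in\Diffub^1(M;\Lambda)$ is formally self-adjoint, uniformly transversely elliptic (same principal symbol as $2D_\perp$), and commutes with $\Delta_0$. By Proposition~\ref{p: D_0 is uniformly leafwise elliptic} and the characterization of $H^m$ via a pair of leafwisely and transversely elliptic operators from Section~\ref{s: Riem folns of bd geometry}, the positive self-adjoint operator $A_m:=(1+D_0^2+Q^2)^{m/2}$ induces an equivalent Hilbertian norm $\|\beta\|_m\asymp\|A_m\beta\|_{L^2}$. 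Since $A_m$ commutes with $e^{-t\Delta_0}$ by functional calculus (Proposition~\ref{p: functional calculus}),
\[
\|A_m e^{-t\Delta_0}\alpha\|_{L^2}=\|e^{-t\Delta_0}A_m\alpha\|_{L^2}\le\|A_m\alpha\|_{L^2}\;,
\]
so $e^{-t\Delta_0}$ is bounded on $H^m$ uniformly in $t\in[0,\infty)$. Applying the $L^2$ convergence $e^{-t\Delta_0}\beta\to\Pi_0\beta$ from the spectral theorem to $\beta=A_m\alpha$ upgrades $e^{-t\Delta_0}\alpha\to\Pi_0\alpha$ to convergence in every $H^m$, hence in $H^\infty$; in particular $\Pi_0$ preserves $H^\infty$ and $(t,\alpha)\mapsto e^{-t\Delta_0}\alpha$ is jointly continuous on $[0,\infty]\times H^\infty$.

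Finally, the topological direct sum~\eqref{leafwise Hodge decomposition} follows by applying the same functional-calculus argument to the partial Laplacians $\delta_{0,-1}d_{0,1}$ and $d_{0,1}\delta_{0,-1}$: a bi-degree analysis of $\{D_\perp-K,D_0\}=0$ shows that $D_\perp-K$ anticommutes separately with $d_{0,1}$ and $\delta_{0,-1}$, whence both $(D_\perp-K)$ and $Q$ commute with these partial Laplacians. Consequently $\lim_{t\to\infty}e^{-t\delta_{0,-1}d_{0,1}}$ and $\lim_{t\to\infty}e^{-td_{0,1}\delta_{0,-1}}$ are continuous $H^\infty$-projections onto $\ker d_{0,1}$ and $\ker\delta_{0,-1}$, which combined with $\Pi_0$ yield continuous $H^\infty$-projections onto the three closed summands. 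The main obstacle is the uniform $H^m$-bound: recognizing that~\eqref{D_perp D_0 + D_0 D_perp} permits the construction of a transverse elliptic operator $Q$ that \emph{exactly} commutes with $\Delta_0$ is what makes the bound uniform in $t$ and thus permits the limit $t=\infty$; a naive Gronwall energy estimate would only yield time-exponential bounds.
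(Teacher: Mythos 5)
Your proposal follows the same route the paper takes: it leans on the anticommutation identity~\eqref{D_perp D_0 + D_0 D_perp}, recast as $\{D_\perp-K,D_0\}=0$, to produce a uniformly transversely elliptic operator that commutes with $\Delta_0$, and it uses this to promote the $L^2$ spectral decomposition and the contraction $\|e^{-t\Delta_0}\|_{L^2}\le 1$ to $H^\infty$. That is exactly the mechanism the paper attributes to the adaptation of \cite[Theorem~A]{AlvKordy2001}, via Corollary~\ref{c: d_i,j, ... are of bd geom}, Propositions~\ref{p: D_0 is uniformly leafwise elliptic},~\ref{p: K is of bd geometry} and~\ref{p: functional calculus}. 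Your closing remark about why a naive Gronwall bound is insufficient correctly identifies the crux.

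There is, however, one genuine gap in the middle step, and it is not merely cosmetic. You write: ``Since $A_m$ commutes with $e^{-t\Delta_0}$ by functional calculus (Proposition~\ref{p: functional calculus}), $\|A_m e^{-t\Delta_0}\alpha\|=\|e^{-t\Delta_0}A_m\alpha\|$.'' Proposition~\ref{p: functional calculus} only concerns the functional calculus of $D_0$; it says nothing about the interaction of $\psi(D_0)$ with the spectral resolution of $1+D_0^2+Q^2$, which is a separate unbounded self-adjoint operator. Formal commutation of the \emph{differential operators} $Q^2$ and $\Delta_0$ on $\Cinftyc$ does not by itself imply that their self-adjoint closures strongly commute (Nelson's example), so ``commutes by functional calculus'' is circular here: strong commutation is precisely what would need to be established, and fractional powers $A_m=(1+D_0^2+Q^2)^{m/2}$ only make sense once that is done. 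To close the gap, bypass the spectral theorem for $1+D_0^2+Q^2$ and argue directly on $H^\infty$, where the formal commutation is literal. For $\alpha\in H^\infty$ set $u_t=e^{-t\Delta_0}\alpha\in H^\infty$ (valid by Proposition~\ref{p: functional calculus}); then for integer $k$,
\[
\frac{d}{dt}\,\|Q^k u_t\|^2
=-2\,\Re\langle Q^k\Delta_0 u_t,\,Q^k u_t\rangle
=-2\,\Re\langle \Delta_0 Q^k u_t,\,Q^k u_t\rangle
=-2\,\|D_0 Q^k u_t\|^2\le 0\;,
\]
so $\|Q^k e^{-t\Delta_0}\alpha\|\le\|Q^k\alpha\|$ uniformly in $t\ge0$; together with $\|D_0^k e^{-t\Delta_0}\alpha\|\le\|D_0^k\alpha\|$ (spectral theorem for $D_0$) and the equivalence of the $H^{2k}$ norm with $\|\cdot\|+\|D_0^{2k}\cdot\|+\|Q^{2k}\cdot\|$ (Proposition~\ref{p: D_0 is uniformly leafwise elliptic}, the transverse ellipticity of $Q$, and Corollary~\ref{c: d_i,j, ... are of bd geom}), this gives the uniform $H^{2k}$ bound, which suffices for $H^\infty$. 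Equivalently, one may use the uniqueness of solutions of the heat equation in $L^2$ to identify $Q^{2k}e^{-t\Delta_0}\alpha$ with $e^{-t\Delta_0}Q^{2k}\alpha$. The same caveat applies to the final paragraph, where you invoke commutation of $Q$ with the partial Laplacians $\delta_{0,-1}d_{0,1}$ and $d_{0,1}\delta_{0,-1}$ at the level of semigroups: the bi-degree analysis is correct, but again the strong commutation must be obtained by the direct $H^\infty$ argument rather than asserted.
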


By Corollary~\ref{c: d_i,j, ... are of bd geom}, $(H^\infty(M;\Lambda),d_{0,1})$ is a topological complex. The terms of the direct sum decomposition~\eqref{leafwise Hodge decomposition} are orthogonal in $L^2(M;\Lambda)$. Thus $\Pi_0$ has a restriction $H^\infty(M;\Lambda)\to\ker\Delta_0$, which induces the isomorphism stated in the following corollary. Its inverse is induced by the inclusion map $\ker\Delta_0\hookrightarrow H^\infty(M;\Lambda)$.

\begin{cor}[Sanguiao \cite{Sanguiao2008}]\label{c: leafwise Hodge iso}
  As topological vector spaces,
\[
\bar H^*(H^\infty(M;\Lambda),d_{0,1})\cong\ker\Delta_0\;.
\]
\end{cor}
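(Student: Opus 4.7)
The plan is to deduce the corollary directly from the leafwise Hodge decomposition of Theorem~\ref{t: leafwise Hodge decomposition}, by showing that the cocycles in $H^\infty(M;\Lambda)$ for $d_{0,1}$ split, as a topological direct sum, into harmonic forms plus the coboundaries, so that the quotient defining $\bar H^*$ is topologically isomorphic to $\ker\Delta_0$. The key point to establish is
\begin{equation}\label{plan: ker d_01 = harm oplus coboundaries}
\ker d_{0,1}\big|_{H^\infty(M;\Lambda)}=\ker\Delta_0\oplus\overline{\im d_{0,1}}\;,
\end{equation}
where the closure is taken in $H^\infty(M;\Lambda)$. The inclusion $\supseteq$ is immediate: any $\alpha\in\ker\Delta_0$ satisfies $\|D_0\alpha\|^2=\|d_{0,1}\alpha\|^2+\|\delta_{0,-1}\alpha\|^2=0$ by~\eqref{delta_0,-1^2=...=0}, so $d_{0,1}\alpha=0$; and $d_{0,1}$ vanishes on $\overline{\im d_{0,1}}$ because $d_{0,1}^2=0$ by~\eqref{d_0,1^2=...=0} and $d_{0,1}$ is continuous on $H^\infty(M;\Lambda)$ (Corollary~\ref{c: d_i,j, ... are of bd geom}).

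For the reverse inclusion, I would take $\alpha\in H^\infty(M;\Lambda)$ with $d_{0,1}\alpha=0$, decompose $\alpha=\alpha_0+\beta+\gamma$ according to~\eqref{leafwise Hodge decomposition}, and use $d_{0,1}\alpha_0=d_{0,1}\beta=0$ to conclude that $d_{0,1}\gamma=0$ with $\gamma\in\overline{\im\delta_{0,-1}}$. Then I would show $\gamma=0$ by an $L^2$-pairing argument: since $H^\infty(M;\Lambda)$ embeds continuously into $L^2(M;\Lambda)$, write $\gamma$ as a limit $\gamma=\lim_n\delta_{0,-1}\eta_n$ in $H^\infty$ (and hence in $L^2$), with $\eta_n\in H^\infty(M;\Lambda)$; then
\[
\|\gamma\|^2_{L^2}=\lim_n\langle\gamma,\delta_{0,-1}\eta_n\rangle_{L^2}=\lim_n\langle d_{0,1}\gamma,\eta_n\rangle_{L^2}=0\;,
\]
using the formal adjoint relationship (valid since all sections involved lie in $L^2$), so $\gamma=0$, proving~\eqref{plan: ker d_01 = harm oplus coboundaries}.

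Once~\eqref{plan: ker d_01 = harm oplus coboundaries} holds as a topological direct sum (inherited from Theorem~\ref{t: leafwise Hodge decomposition}), the quotient
\[
\bar H^*(H^\infty(M;\Lambda),d_{0,1})=\ker d_{0,1}/\overline{\im d_{0,1}}
\]
is canonically topologically isomorphic to $\ker\Delta_0$, via the continuous projection $\Pi_0$ (restricted from $L^2$ to $H^\infty$, which is well defined and continuous because it coincides with the topological projection onto the first summand of~\eqref{leafwise Hodge decomposition}); its inverse is induced by the continuous inclusion $\ker\Delta_0\hookrightarrow H^\infty(M;\Lambda)$, composed with the quotient map to $\bar H^*$. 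The main (modest) obstacle is the careful bookkeeping in the $L^2$-pairing step: one must justify that the $H^\infty$-limit giving $\gamma$ also converges in $L^2$, and that formal adjointness of $d_{0,1}$ and $\delta_{0,-1}$ is applicable on the subspace $H^\infty(M;\Lambda)\subset L^2(M;\Lambda)$, both of which follow from the continuous inclusion $H^\infty\hookrightarrow L^2$ and the fact that $d_{0,1},\delta_{0,-1}$ preserve $H^\infty$ by Corollary~\ref{c: d_i,j, ... are of bd geom}.
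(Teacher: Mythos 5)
Your argument is correct and follows the same route the paper takes, merely spelling out the step the paper leaves implicit: you use the $L^2$-orthogonality of the three summands in~\eqref{leafwise Hodge decomposition} to conclude that the restriction of $\Pi_0$ to $H^\infty(M;\Lambda)$ identifies $\ker d_{0,1}/\overline{\im d_{0,1}}$ with $\ker\Delta_0$, with the inclusion $\ker\Delta_0\hookrightarrow H^\infty(M;\Lambda)$ inducing the inverse. The only cosmetic remark is that in the very first inclusion the Pythagorean identity $\|D_0\alpha\|^2=\|d_{0,1}\alpha\|^2+\|\delta_{0,-1}\alpha\|^2$ rests on formal adjointness of $d_{0,1}$ and $\delta_{0,-1}$ together with $d_{0,1}^2=0$ (or, equivalently, \eqref{delta_0,-1^2=...=0}); citing one is as good as the other, but the mechanism is adjointness rather than nilpotency alone.
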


By~\eqref{Lambda M} and~\eqref{d_0,1 equiv d_FF}, we can consider $(H^\infty(M;\Lambda\FF),d_\FF)$ as a topological subcomplex of $(H^\infty(M;\Lambda),d_{0,1})$, and the notation $H^*H^\infty(\FF)$ and $\bar H^*H^\infty(\FF)$ is used for its (reduced) cohomology. By  Lemma~\ref{l: g bundle-like <=> delta_0,1 equiv delta_FF}, $\delta_\FF$ on $H^\infty(M;\Lambda\FF)$ is also given by $\delta_{0,-1}$. Thus we get the operators $D_\FF=d_\FF+\delta_\FF$ and $\Delta_\FF=D_\FF^2=\delta_\FF d_\FF+d_\FF\delta_\FF$ on $H^\infty(M;\Lambda\FF)$, which are essentially self-adjoint in $L^2(M;\Lambda\FF)$. Then Propositions~\ref{p: |e^itD_0 alpha|_m} and~\ref{p: functional calculus}, Theorem~\ref{t: leafwise Hodge decomposition} and Corollary~\ref{c: leafwise Hodge iso} have obvious versions for $d_\FF$, $\delta_\FF$, $D_\FF$ and $\Delta_\FF$; in particular, we get the following.

\begin{thm}[Sanguiao \cite{Sanguiao2008}]\label{t: leafwise Hodge decomposition for d_FF}
Let $\FF$ be a Riemannian foliation of bounded geometry on a Riemannian manifold $M$ with a bundle-like metric. Then there is a topological direct sum decomposition, 
\begin{equation}\label{leafwise Hodge decomposition for d_FF}
H^\infty(M;\Lambda\FF)=\ker\Delta_\FF\oplus\overline{\im d_\FF}\oplus\overline{\im\delta_\FF}\;.
\end{equation}
Moreover $(t,\alpha)\mapsto e^{-t\Delta_\FF}\alpha$ defines a continuous map
\[
[0,\infty]\times H^\infty(M;\Lambda\FF)\to H^\infty(M;\Lambda\FF)\;.
\]
\end{thm}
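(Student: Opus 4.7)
The plan is to deduce the theorem directly from Theorem~\ref{t: leafwise Hodge decomposition} by restriction to the $(0,\cdot)$ bi-graded summand, exploiting the identifications $\Lambda\FF\equiv\Lambda^{0,\cdot}M$ from~\eqref{Lambda M}, $d_{0,1}\equiv d_\FF$ from~\eqref{d_0,1 equiv d_FF}, and $\delta_{0,-1}\equiv\delta_\FF$ from Lemma~\ref{l: g bundle-like <=> delta_0,1 equiv delta_FF}. First, I would observe that the bigrading of $\Lambda M$ is preserved by the connection $\rnabla$ acting on the bi-graded summands (as recalled at the start of Section~\ref{s: ops of bd geom on diff forms}), so that the Sobolev space $H^\infty(M;\Lambda)$ inherits a topological direct sum decomposition by bi-degrees. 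Moreover, by Lemma~\ref{l: the canonical projection to bigwedge^u,vT^*M is of bd geometry}, the projection $\pi_{u,v}:\Lambda M\to\Lambda^{u,v}M$ is of bounded geometry, hence induces a continuous idempotent on $H^\infty(M;\Lambda)$; in particular the inclusion $H^\infty(M;\Lambda\FF)\hookrightarrow H^\infty(M;\Lambda)$ is a topological embedding onto the $(0,\cdot)$ summand.

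Next, I would note that $d_{0,1}$ and $\delta_{0,-1}$ preserve the first bi-degree $u$, so both restrict to continuous endomorphisms of $H^\infty(M;\Lambda^{0,\cdot})$; under the identification $\Lambda\FF\equiv\Lambda^{0,\cdot}M$ these restrictions are exactly $d_\FF$ and $\delta_\FF$. Consequently $D_0$ and $\Delta_0$ restrict to $D_\FF$ and $\Delta_\FF$ on $H^\infty(M;\Lambda\FF)$. Applying the bi-degree projection $\pi_{0,\cdot}$ to~\eqref{leafwise Hodge decomposition} then yields the decomposition
\[
H^\infty(M;\Lambda\FF)=\ker\Delta_\FF\oplus\pi_{0,\cdot}\overline{\im d_{0,1}}\oplus\pi_{0,\cdot}\overline{\im\delta_{0,-1}}\;,
\]
and the key identifications $\pi_{0,\cdot}\overline{\im d_{0,1}}=\overline{\im d_\FF}$ and $\pi_{0,\cdot}\overline{\im\delta_{0,-1}}=\overline{\im\delta_\FF}$ (closures in $H^\infty(M;\Lambda\FF)$) follow because $\pi_{0,\cdot}$ is continuous and commutes with $d_{0,1}$ and $\delta_{0,-1}$, and because $d_{0,1}$ and $\delta_{0,-1}$ applied to a $(0,\cdot)$-form produce a $(0,\cdot)$-form equal to $d_\FF$ and $\delta_\FF$ respectively.

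For the second statement, I would use that $\Delta_0$ preserves bi-degree, so by the spectral theorem (and its continuous functional calculus on $H^\infty(M;\Lambda)$ from Proposition~\ref{p: functional calculus}) the operator $e^{-t\Delta_0}$ also preserves bi-degree for each $t\in[0,\infty]$ and restricts to $e^{-t\Delta_\FF}$ on $H^\infty(M;\Lambda\FF)$. The continuity of $(t,\alpha)\mapsto e^{-t\Delta_\FF}\alpha$ on $[0,\infty]\times H^\infty(M;\Lambda\FF)$ then follows from the corresponding statement in Theorem~\ref{t: leafwise Hodge decomposition} by precomposing with the topological embedding $H^\infty(M;\Lambda\FF)\hookrightarrow H^\infty(M;\Lambda)$ and postcomposing with the continuous projection $\pi_{0,\cdot}$.

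I expect no serious obstacle, since all the analytic work was already carried out at the level of the full bundle $\Lambda M$; the only point requiring a line of verification is the commutativity of closures with the bi-degree projection, and this is automatic because $\pi_{0,\cdot}$ is a continuous idempotent that intertwines $d_{0,1}$ with $d_\FF$ and $\delta_{0,-1}$ with $\delta_\FF$. Alternatively, one could simply rerun the proofs of Propositions~\ref{p: |e^itD_0 alpha|_m} and~\ref{p: functional calculus}, Theorem~\ref{t: leafwise Hodge decomposition}, and Corollary~\ref{c: leafwise Hodge iso} verbatim with $D_0,\Delta_0$ replaced by $D_\FF,\Delta_\FF$ and $\Lambda M$ by $\Lambda\FF$; the bounded-geometry inputs (Corollary~\ref{c: d_i,j, ... are of bd geom}, Lemma~\ref{p: K is of bd geometry}, Proposition~\ref{p: D_0 is uniformly leafwise elliptic}, identity~\eqref{D_perp D_0 + D_0 D_perp}) hold just as well for the restricted operators.
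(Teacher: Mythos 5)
Your proposal is correct, and its primary argument is genuinely different from the paper's. The paper's proof of this theorem is just the paragraph preceding it: Propositions~\ref{p: |e^itD_0 alpha|_m} and~\ref{p: functional calculus}, Theorem~\ref{t: leafwise Hodge decomposition} and Corollary~\ref{c: leafwise Hodge iso} are said to have ``obvious versions'' for $d_\FF$, $\delta_\FF$, $D_\FF$, $\Delta_\FF$ --- in other words, precisely the rerun-verbatim alternative you sketch at the end. Your main route instead deduces the $\Lambda\FF$ statement from the already established Theorem~\ref{t: leafwise Hodge decomposition} by applying the continuous bi-degree projection $\pi_{0,\cdot}$ (whose boundedness is Lemma~\ref{l: the canonical projection to bigwedge^u,vT^*M is of bd geometry}), using that $d_{0,1}$, $\delta_{0,-1}$ and $\Delta_0$ preserve the first bi-degree and hence commute with $\pi_{0,\cdot}$, so that each of the three summands is carried onto its $\FF$-counterpart. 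This is more economical --- no re-verification of the bounded-geometry estimates is needed --- but it leans on one fact worth spelling out: the self-adjoint closure of $\Delta_0$ restricted to the invariant subspace $L^2(M;\Lambda\FF)$ must coincide with the self-adjoint closure of $\Delta_\FF$, which is what you need in order to identify $e^{-t\Delta_0}|_{H^\infty(M;\Lambda\FF)}$ with $e^{-t\Delta_\FF}$ for $t\in(0,\infty]$. This holds because both operators are essentially self-adjoint on compactly supported forms (Chernoff), so the restriction of the unique self-adjoint extension is the unique self-adjoint extension of the restriction, but it is a mildly delicate point that the paper's rerun approach avoids by construction. Either route is sound.
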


\begin{cor}[Sanguiao \cite{Sanguiao2008}]\label{c: leafwise Hodge iso for d_FF}
$\bar H^*H^\infty(\FF)\cong\ker\Delta_\FF$ as topological vector spaces.
\end{cor}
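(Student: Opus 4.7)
The plan is to deduce Corollary~\ref{c: leafwise Hodge iso for d_FF} from Theorem~\ref{t: leafwise Hodge decomposition for d_FF} in direct analogy with the derivation of Corollary~\ref{c: leafwise Hodge iso} from Theorem~\ref{t: leafwise Hodge decomposition}. First I would observe that the three summands in~\eqref{leafwise Hodge decomposition for d_FF} are mutually orthogonal in $L^2(M;\Lambda\FF)$: since $\langle\Delta_\FF\alpha,\alpha\rangle=\|d_\FF\alpha\|^2+\|\delta_\FF\alpha\|^2$ one has $\ker\Delta_\FF=\ker d_\FF\cap\ker\delta_\FF$ on the domain of $D_\FF$ (and $d_\FF\alpha$, $\delta_\FF\alpha$ live in orthogonal form degrees, so $D_\FF\alpha=0$ forces both to vanish), and $\im d_\FF\perp\im\delta_\FF$ in $L^2(M;\Lambda\FF)$ because $\langle d_\FF\alpha,\delta_\FF\beta\rangle=\langle d_\FF^2\alpha,\beta\rangle=0$ on $\Cinftyc(M;\Lambda\FF)$. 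Consequently the projection $\Pi_\FF$ of $L^2(M;\Lambda\FF)$ onto $\ker\Delta_\FF$ coincides on $H^\infty(M;\Lambda\FF)$ with the projection onto the first factor of~\eqref{leafwise Hodge decomposition for d_FF}, so it is continuous as a map $H^\infty(M;\Lambda\FF)\to\ker\Delta_\FF$.

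Next I would identify the cocycles as
\[
\ker d_\FF\cap H^\infty(M;\Lambda\FF)=\ker\Delta_\FF\oplus\overline{\im d_\FF}\;,
\]
with closure taken in $H^\infty(M;\Lambda\FF)$. The inclusion $\supseteq$ is immediate: elements of $\ker\Delta_\FF$ are annihilated by $d_\FF$ by the above norm identity, and $\overline{\im d_\FF}\subset\ker d_\FF$ since $d_\FF$ is continuous on $H^\infty(M;\Lambda\FF)$ by Corollary~\ref{c: d_i,j, ... are of bd geom} and $d_\FF^2=0$. For the reverse inclusion, given $\alpha\in\ker d_\FF\cap H^\infty(M;\Lambda\FF)$, decompose $\alpha=\alpha_0+\alpha_1+\alpha_2$ via~\eqref{leafwise Hodge decomposition for d_FF}; since $d_\FF\alpha_0=d_\FF\alpha_1=0$, this forces $d_\FF\alpha_2=0$. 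Because $H^\infty(M;\Lambda\FF)\hookrightarrow L^2(M;\Lambda\FF)$ is continuous, $\alpha_2$ also lies in the $L^2$-closure of $\im\delta_\FF$; but this closure is orthogonal to $\ker d_\FF$ in $L^2(M;\Lambda\FF)$ by the adjointness $\langle\delta_\FF\beta,\gamma\rangle=\langle\beta,d_\FF\gamma\rangle$, so $\alpha_2=0$.

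Putting the two steps together, the restriction of $\Pi_\FF$ to $\ker d_\FF\cap H^\infty(M;\Lambda\FF)$ is a continuous surjection onto $\ker\Delta_\FF$ with kernel exactly $\overline{\im d_\FF}$, so it descends to a continuous linear bijection $\bar H^*H^\infty(\FF)\to\ker\Delta_\FF$. Its inverse is induced by the inclusion $\ker\Delta_\FF\hookrightarrow\ker d_\FF\cap H^\infty(M;\Lambda\FF)$, which is manifestly continuous, so the bijection is a topological isomorphism. The only point I would check with some care, and what I regard as the main subtlety, is the transfer between the $H^\infty$- and $L^2$-closures of $\im\delta_\FF$: killing $\alpha_2$ uses $L^2$-orthogonality applied to an element produced by the $H^\infty$-Hodge decomposition, and this is legitimate only because $H^\infty(M;\Lambda\FF)\subset L^2(M;\Lambda\FF)$ continuously and because $\delta_\FF$ is the genuine $L^2$-formal adjoint of $d_\FF$. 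Everything else is a routine transcription of the proof of Corollary~\ref{c: leafwise Hodge iso}.
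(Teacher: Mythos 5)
Your argument is correct and matches the paper's proof of Corollary~\ref{c: leafwise Hodge iso} (and the remark just before Theorem~\ref{t: leafwise Hodge decomposition for d_FF} that the $d_\FF$-version is the obvious analogue): the paper's one-paragraph argument is precisely that the summands in the Hodge decomposition are $L^2$-orthogonal, so $\Pi_\FF$ restricts to a continuous map $H^\infty(M;\Lambda\FF)\to\ker\Delta_\FF$ inducing the isomorphism, with inverse induced by the inclusion $\ker\Delta_\FF\hookrightarrow H^\infty(M;\Lambda\FF)$. Your write-up fills in the details the paper leaves implicit — in particular the identification $\ker d_\FF\cap H^\infty=\ker\Delta_\FF\oplus\overline{\im d_\FF}$ and the step that kills $\alpha_2$ by combining the $H^\infty$-decomposition with $L^2$-orthogonality — but it is the same route, not a different one.
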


Theorem~\ref{t: leafwise Hodge decomposition} and Corollary~\ref{c: leafwise Hodge iso} can be considered as versions of Theorem~\ref{t: leafwise Hodge decomposition for d_FF} and Corollary~\ref{c: leafwise Hodge iso for d_FF} with coefficients in $\Lambda N\FF$. We could also take leafwise differential forms with coefficients in other Hermitian vector bundles associated to $N\FF$, like the bundles of transverse densities or transverse symmetric tensors\footnote{However this is not true for any Hermitian vector bundle with a flat Riemannian $\FF$-partial connection, contrary to what was wrongly asserted in \cite[Corollary~C]{AlvKordy2001} when $M$ is compact. A counterexample was provided to the first two authors by S.~Goette.}. 

Compare Theorem~\ref{t: leafwise Hodge decomposition} and Corollary~\ref{c: leafwise Hodge iso for d_FF} with~\eqref{Hodge dec} and~\eqref{Hodge iso} (the case of an elliptic complex on a closed manifold).

\section{Foliated maps of bounded geometry}\label{ss: foliated maps of bd geom}

For $a=1,2$, let $\FF_a$ be a Riemannian foliation of bounded geometry on a manifold $M_a$ with a bundle-like metric. Set $n'_a=\codim\FF_a$ and $n''_a=\dim\FF_a$. Consider a normal foliated chart $x_{a,p}:U_{a,p}\to B'_a\times B''_a$ at every $p\in M_a$ satisfying the conditions of Theorem~\ref{t: foln of bd geometry}. Let $r'_a$ and $r''_a$ denote the radii of $B'_a$ and $B''_a$, respectively. For $0<r'<r'_a$ and $0<r''<r''_a$, let $U_{a,p,r',r''}=x_{a,p}^{-1}(B'_{a,r'}\times B''_{a,r''})$, where $B'_{a,r'}\subset\R^{n'_a}$ and $B''_{a,r''}\subset\R^{n''_a}$ denote the balls centered at the origin with respective radii $r'$ and $r''$. Like in the cases of $C_{\text{\rm ub}}^m(M;E)$, $H^m(M;E)$ and $\Diffub^m(M;E,F)$ (Section~\ref{s: Riem folns of bd geometry}), in the definition of bounded geometry for maps $M_1\to M_2$ (Section~\ref{ss: maps of bd geom}), we can replace the charts $(V_{1,p},y_{1,p})$ and $(V_{2,\phi(p)},y_{2,\phi(p)})$ with the charts $(U_{1,p},x_{1,p})$ and $(U_{2,\phi(p)},x_{2,\phi(p)})$, and we can replace the sets $B_1(p,r)$ with the sets $U_{1,p,r',r''}$. Let
\[
\Cinftyub(M_1,\FF_1;M_2,\FF_2)=C^\infty(M_1,\FF_1;M_2,\FF_2)\cap \Cinftyub(M_1,M_2)\;.
\]

For $m\in\N_0$ and $\phi\in \Cinftyub(M_1,\FF_1;M_2,\FF_2)$, using the version of $\|\cdot\|'_{C^m_{\text{\rm ub}}}$ defined with the charts $(U_p,x_p)$, it follows that $\phi^*$ induces a bounded homomorphism
\[
\phi^*:C_{\text{\rm ub}}^m(M_2;\Lambda\FF_2)
\to C_{\text{\rm ub}}^m(M_1;\Lambda\FF_1)\;,
\]
obtaining a continuous homomorphism
\[
\phi^*:\Cinftyub(M_2;\Lambda\FF_2)\to \Cinftyub(M_1;\Lambda\FF_1)\;.
\]
These homomorphisms are induced by~\eqref{phi^* on C_ub^m} and~\eqref{phi^* on C_ub^infty} via~\eqref{Lambda M}. Similarly, if $\phi$ is also uniformly metrically proper, then $\phi^*$ induces a bounded homomorphism
\[
\phi^*:H^m(M_2;\Lambda\FF_2)\to H^m(M_1;\Lambda\FF_1)
\]
for all $m$, and therefore it induces a continuous homomorphism
\[
\phi^*:H^{\pm\infty}(M_2;\Lambda\FF_2)\to H^{\pm\infty}(M_1;\Lambda\FF_1)\;.
\]
By~\eqref{phi^*_0,0 equiv phi^*}, these homomorphisms are induced by~\eqref{phi^* on H^m} and~\eqref{phi^* on H^pm infty} via~\eqref{Lambda M}.

Now, let $\phi=\{\phi^t\}$ be a foliated flow on $(M,\FF)$ of $\R$-local bounded geometry, and let $Z\in\fX_{\text{\rm ub}}(M,\FF)$ be its infinitesimal generator (Proposition~\ref{p: X is C^infty-uniformly bd <=> phi^t is of R-local bd geom}). Every $\phi^t$ is uniformly metrically proper because $\phi^{\pm t}$ is of bounded geometry (Section~\ref{ss: maps of bd geom}). Thus $\phi^{t*}$ induces continuous homomorphisms
\[
\Cinftyub(M;\Lambda\FF)\to \Cinftyub(M;\Lambda\FF)\;,\quad
H^{\pm\infty}(M;\Lambda\FF)\to H^{\pm\infty}(M;\Lambda\FF)\;.
\]

\begin{prop}\label{p: Theta_Z is of bd geometry}
$\Theta_Z\in\Diffub^1(M;\Lambda\FF)$.
\end{prop}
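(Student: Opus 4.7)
The plan is to reduce to showing that each building block in the explicit formula~\eqref{LL_X,0,0} for $\LL_{Z,0,0}$ is of bounded geometry and then invoke closure of $\Diffub$ under composition and addition.

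First I would verify that the projections $\bfV : TM \to T\FF$ and $\bfH : TM \to \bfH$ are vector bundle morphisms of bounded geometry. This is immediate from the explicit formulas~\eqref{bV} and~\eqref{bH}, since by Theorem~\ref{t: foln of bd geometry} the metric coefficients $g_{ij}$ and $g^{ij}$ lie in a bounded subset of $C^\infty(B'\times B'')$ when read in normal foliated charts. Consequently, since $Z\in\fX_{\text{\rm ub}}(M,\FF)\subset\fX_{\text{\rm ub}}(M)$, the vector fields $\bfV Z$ and $\bfH Z$ again belong to $\fX_{\text{\rm ub}}(M)$. Their contractions $\iota_{\bfV Z}$ and $\iota_{\bfH Z}$ are then zero-order differential operators of bounded geometry on $\Lambda M$, because in local trivializations of $\Lambda M$ over the charts $(U_p,x_p)$ their coefficients are universal polynomial expressions in the $C^\infty$-uniformly bounded components of $Z$ (and in the $g_{ij}$, $g^{ij}$).

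Next, by Corollary~\ref{c: d_i,j, ... are of bd geom} the operators $d_{0,1}$ and $d_{1,0}$ are in $\Diffub^1(M;\Lambda)$. Since $\Diffub$ is closed under composition and addition (Section~\ref{ss: diff ops of bd geom}), the formula
\[
\LL_{Z,0,0}=d_{0,1}\iota_{\bfV Z}+\iota_{\bfV Z}d_{0,1}+d_{1,0}\iota_{\bfH Z}+\iota_{\bfH Z}d_{1,0}
\]
from~\eqref{LL_X,0,0} shows that $\LL_{Z,0,0}\in\Diffub^1(M;\Lambda M)$. Alternatively, one may argue directly: $\LL_Z=d\iota_Z+\iota_Z d$ is in $\Diffub^1(M;\Lambda)$ (as $d$ is, by Example~\ref{ex: connections of bd geom}\eqref{i: d is of boundedgeometry}, and $\iota_Z$ is zero-order of bounded geometry), and then Lemma~\ref{l: the canonical projection to bigwedge^u,vT^*M is of bd geometry} shows that the projection onto the $(0,0)$-bihomogeneous component preserves bounded geometry.

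Finally, I would translate back via~\eqref{Lambda M}, which identifies $\Lambda^{0,\cdot}M$ with $\Lambda\FF$ as vector bundles of bounded geometry. Since $\LL_{Z,0,0}$ preserves the bigrading, its restriction to $\Lambda^{0,\cdot}M$ corresponds under~\eqref{Lambda M} to $\Theta_Z$ acting on $\Lambda\FF$, so $\Theta_Z\in\Diffub^1(M;\Lambda\FF)$. The only technical point to be careful about is verifying the bounded geometry of $\bfV$ and $\bfH$ as morphisms; once that is in hand via~\eqref{bV}--\eqref{bH} and Theorem~\ref{t: foln of bd geometry}, the rest is bookkeeping using the stability of $\Diffub$ under composition.
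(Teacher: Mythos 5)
Your proof is correct, and in fact you give the paper's proof verbatim in your ``alternative'' paragraph: the paper observes that $\LL_Z=d\iota_Z+\iota_Zd$ is of bounded geometry because $d$ is (Example~\ref{ex: connections of bd geom}~\eqref{i: d is of boundedgeometry}) and $\iota_Z$ is (since $Z\in\fX_{\text{\rm ub}}(M,\FF)$), and then invokes Lemma~\ref{l: the canonical projection to bigwedge^u,vT^*M is of bd geometry} and the identification~\eqref{Lambda M} to conclude. Your primary argument, which expands $\LL_{Z,0,0}$ via~\eqref{LL_X,0,0} and checks each of $\iota_{\bfV Z}$, $\iota_{\bfH Z}$, $d_{0,1}$, $d_{1,0}$ separately, is an equivalent but more explicit unpacking of the same reasoning — it carries the same content as Lemma~\ref{l: the canonical projection to bigwedge^u,vT^*M is of bd geometry} but written out by hand, so it is fine, just longer than necessary.
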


\begin{proof}
 Since $d\in\Diffub^1(M;\Lambda\FF)$ and $Z\in\fX_{\text{\rm ub}}(M,\FF)$, we get that $\LL_Z=d\iota_Z+\iota_Zd$ is of bounded geometry. So $\Theta_Z\equiv\LL_{Z,0,0}$ is of bounded geometry by Lemma~\ref{l: the canonical projection to bigwedge^u,vT^*M is of bd geometry} and using the identity~\eqref{Lambda M}.
\end{proof}

By~\eqref{LL_X,0,0} and~\eqref{Fsigma(d_0,1), sigma(d_-1,0)},
\[
\sigma(\Theta_Z)(p,\zeta)=i\zeta(Z)
\]
for all $p\in M$ and $\zeta\in N^*_pM$, obtaining the following.

\begin{prop}\label{p: Theta_Z is unif transv elliptic}
$\Theta_Z$ is uniformly transversely elliptic if $\inf_M|\overline Z|>0$.
\end{prop}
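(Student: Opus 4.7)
The proof is essentially a symbol calculation. The principal symbol of $\Theta_Z$, as an element of $\Diff^1(M;\Lambda\FF)$, has already been exhibited just before the statement as
\[
\sigma_1(\Theta_Z)(p,\zeta)=i\zeta(Z)\cdot\id_{\Lambda_p\FF}\;,
\]
so what remains is only to verify the two-sided uniform estimate $C^{-1}|\zeta|\le|\zeta(Z)|\le C|\zeta|$ for all $p\in M$ and $\zeta\in N^*_p\FF$.

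First I would use that $\zeta\in N^*_p\FF$ annihilates $T_p\FF$, which gives $\zeta(Z_p)=\zeta(\bfV Z_p)+\zeta(\bfH Z_p)=\zeta(\bfH Z_p)$. Since the bundle-like metric induces an isometry $\bfH\equiv N\FF$ under which $\bfH Z$ corresponds to $\overline Z$, the expression $\zeta(\overline Z_p)$ is the natural pairing of $\zeta\in N^*_p\FF$ with $\overline Z_p\in N_p\FF$. In the codimension-one setting in which this proposition is applied (the foliated-flow setup fixed in the introduction), $N^*_p\FF$ is one-dimensional, so Cauchy--Schwarz becomes equality: $|\zeta(\overline Z_p)|=|\zeta|\cdot|\overline Z_p|$.

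The upper bound then follows from $Z\in\fX_{\text{ub}}(M,\FF)$, which entails $\|\overline Z\|_{L^\infty}\le\|Z\|_{L^\infty}<\infty$; the lower bound is immediate from the hypothesis $\inf_M|\overline Z|>0$. Taking $C=\max\{\|\overline Z\|_{L^\infty},\;(\inf_M|\overline Z|)^{-1}\}$ delivers uniform transverse ellipticity.

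The only mild subtlety — and the reason I would be careful to invoke the codimension-one hypothesis explicitly — is that in higher codimension the symbol $\zeta(\overline Z)$ vanishes on the hyperplane $\{\zeta:\zeta\perp\overline Z\}\subset N^*_p\FF$, so the mere condition $\inf|\overline Z|>0$ would not suffice. Apart from this dimensional remark, the argument is a one-step consequence of the symbol formula and the identification $\bfH\equiv N\FF$ afforded by the bundle-like metric.
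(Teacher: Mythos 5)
Your proof is correct and takes the same route as the paper: the paper simply states the symbol formula $\sigma(\Theta_Z)(p,\zeta)=i\zeta(Z)$ immediately before the proposition and says ``obtaining the following,'' leaving exactly the two-sided estimate you supply to the reader. The details you fill in (reduction to the pairing of $\zeta\in N_p^*\FF$ with $\overline Z_p$, equality in Cauchy--Schwarz in one dimension, upper bound from $Z\in\fX_{\text{ub}}(M,\FF)$, lower bound from $\inf_M|\overline Z|>0$) are what is meant.

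Your remark about codimension is also well taken and worth flagging: the proposition sits in Section~\ref{ss: foliated maps of bd geom}, which does not yet impose $\codim\FF=1$, but the statement as written is only true when $N^*\FF$ is a line bundle, since otherwise $\zeta(\overline Z)$ vanishes on the nonzero orthogonal complement of $\overline Z$ in $N_p^*\FF$ and the symbol fails to be injective. The hypothesis is made explicit only at the start of Section~\ref{s: smoothing operators}, where the proposition is actually used, so there is no downstream error in the paper, but a reader applying the proposition on its own would need to supply the codimension-one assumption that you correctly identify.
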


\section{A class of smoothing operators}\label{s: smoothing operators}

Suppose that $\FF$ is of codimension one\footnote{The higher dimensional case could be treated like in \cite{AlvKordy2008a}, but we only consider codimension one here for the sake of simplicity.}. Assume also that $M$ is equipped with a bundle-like metric $g$ so that $\FF$ is of bounded geometry. Let $\phi=\{\phi^t\}$ be a foliated flow of $\R$-local bounded geometry, whose infinitesimal generator is $Z\in\fX_{\text{\rm ub}}(M,\FF)$ (Proposition~\ref{p: X is C^infty-uniformly bd <=> phi^t is of R-local bd geom}). Suppose that $\inf_M|\overline Z|>0$; in particular, the orbits of $\phi$ are transverse to the leaves. Moreover let $A=\{\,A_t\mid t\in\R\,\}\subset\Diff^m(\FF;\Lambda\FF)$ be a smooth $\R$-compactly supported family of $\R$-local bounded geometry. For every $\psi\in\AA$, the operator
\begin{equation}\label{P}
P=\int_\R\phi^{t*}A_t\,dt\,\psi(D_\FF)
\end{equation}
on $H^{-\infty}(M;\Lambda\FF)$ is defined by the version of Proposition~\ref{p: functional calculus} for $D_\FF$. The subindex ``$\psi$'' may be added to the notation of $P$ if needed, or the subindex ``$u$'' in the case of functions $\psi_u\in\AA$ depending on a parameter $u$.

\begin{prop}\label{p: P}
$P_\psi$ is a smoothing operator, and the linear map
\[
\AA\to L(\textstyle{H^{-\infty}(M;\Lambda\FF),H^\infty(M;\Lambda\FF)})\;,\quad\psi\mapsto P_\psi\;,
\]
is continuous.
\end{prop}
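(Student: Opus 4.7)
The plan is to establish that the Schwartz kernel $K_{P_\psi}$ lies in $\Cinftyub(M^2;\Lambda\FF\boxtimes(\Lambda\FF^*\otimes\Omega))$ with bounds depending continuously on $\psi\in\AA$; by Proposition~\ref{p: Schwartz kernel with bd geometry} this is exactly the claim.

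I would first analyze the Schwartz kernel of $\psi(D_\FF)$. Since $\psi(D_\FF)$ acts leafwisely, its kernel is supported on $\RR_\FF=\{(p,q):L_p=L_q\}$ and, on each leaf $L$, restricts to the Schwartz kernel of $\psi(D_L)$ for the leafwise Dirac-type operator $D_L$. The disjoint union of leaves is of equi-bounded geometry (Remark~\ref{r: equi-bounded geometry}) and $\{D_L\}$ is uniformly elliptic of bounded geometry, so the leafwise version of~\eqref{psi mapsto K_psi(P)}, together with Proposition~\ref{p: functional calculus} for $D_\FF$, yields a leafwise kernel that is $C^\infty$-uniformly bounded along leaves, with bounds continuous in $\psi\in\AA$. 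Note that this alone does not make $\psi(D_\FF)$ smoothing on $M$ --- the kernel is distributional in transverse directions --- so the transverse flow must supply the missing regularity.

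Next I would compute $K_{P_\psi}$ locally in adapted foliated coordinates $(x',x'')$. Because $\inf_M|\overline Z|>0$, the flow is everywhere transverse to the leaves, so one can arrange the transverse coordinate $x'$ so that the transverse component of $\phi^t$ is non-degenerate. Unfolding
\[
P_\psi\alpha(p)=\int_\R \phi^{t*}\bigl(A_t\psi(D_\FF)\alpha\bigr)(p)\,dt
\]
and changing the integration variable from $t$ to the transverse coordinate of $\phi^t(p)$, the $t$-integral becomes a transverse convolution against the smooth $\R$-compactly supported family $\{A_t\}$. Combined with Step~1, this shows $K_{P_\psi}$ is locally smooth: leafwise smoothness comes from the kernel of $\psi(D_\FF)$ differentiated by the leafwise differential operator $A_t$, while transverse smoothness comes from the arbitrarily many $t$-derivatives of $A_t$, all compactly supported in $t$.

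Finally, the bounded geometry of foliated charts (Theorem~\ref{t: foln of bd geometry}), the $\R$-local bounded geometry of $\phi$ (Proposition~\ref{p: X is C^infty-uniformly bd <=> phi^t is of R-local bd geom}), and the $\R$-local bounded geometry of $\{A_t\}$ together make the local smoothness estimates uniform across charts, yielding $K_{P_\psi}\in\Cinftyub(M^2;\Lambda\FF\boxtimes(\Lambda\FF^*\otimes\Omega))$; tracking the $\AA$-seminorm dependence at each step gives the continuity of $\psi\mapsto P_\psi$. The main obstacle I expect is making the transverse change of variable rigorous on all of $M^2$ --- in particular, handling overlapping foliated charts where $\phi^t$ twists the leafwise directions and leaves may have non-trivial holonomy --- which should be manageable using the holonomy groupoid framework of Section~\ref{ss: holonomy groupoid} together with the bounded-geometry foliated coordinates of Theorem~\ref{t: foln of bd geometry}.
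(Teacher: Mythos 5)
Your plan inverts Proposition~\ref{p: Schwartz kernel with bd geometry}: that proposition produces a $\Cinftyub$ kernel \emph{from} an operator in $L(H^{-\infty},H^{\infty})$, not the other way around. The converse --- that a kernel in $\Cinftyub(M^2;\cdots)$ gives a bounded map $H^{-m}\to H^{m'}$ for all $m,m'$ --- is false without an additional hypothesis controlling the support (e.g.\ $\supp K_{P_\psi}\subset\{d(p,q)\le R\}$ for some finite $R$), and neither this support bound nor the Schur-type estimate that would turn it into operator bounds appears in the paper. The support condition does hold here (it follows from~\eqref{supp hat psi subset[-R,R] => supp K_psi(D_z) subset ...}, the $\R$-local bounded geometry of $\phi$ and the compact $t$-support of $A$), but you must state and use it. There is a second gap: your local kernel computation presupposes $\hat\psi$ compactly supported (as in Proposition~\ref{p: k in C^infty_p(fG;S)}, the leafwise kernel $k_\psi$ is a section of $S$ over $\fG$ supported in a leafwise penumbra only under that hypothesis), so you must pass to general $\psi\in\AA$ by density of $\Cinftyc(\R)$ in $\AA$; that requires a priori estimates for $\psi\mapsto P_\psi$ in $\AA$-seminorms, which your kernel approach would have to re-derive independently, whereas the paper obtains them for free from its operator bounds (\eqref{|(1+Delta_FF)^N psi(D_FF)|_m}--\eqref{|P|_m,m'}). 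Also, "the $t$-integral becomes a transverse convolution" is imprecise: the transverse delta in $K_{\psi(D_\FF)}$ is resolved by the change of variable, so the $t$-integral collapses to a point evaluation at $t=t(p,q)$, and the transverse smoothness then comes from the smoothness of $(p,q)\mapsto t(p,q)$ (by transversality and the implicit function theorem) and of $\phi^t$, $A_t$ in $t$.

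Beyond these gaps, your route is genuinely different from the paper's and is essentially the kernel computation the paper carries out later, in Proposition~\ref{p: Schwartz kernel}, \emph{after} smoothing has been established (there only for $A_t=f(t)$). The paper's proof of Proposition~\ref{p: P} instead works entirely at the operator level: $\Theta_Z$ is uniformly transversely elliptic (Propositions~\ref{p: Theta_Z is of bd geometry} and~\ref{p: Theta_Z is unif transv elliptic}, using $\inf_M|\overline Z|>0$) and $D_\FF$ is uniformly leafwise elliptic, so by the remark at the end of Section~\ref{s: Riem folns of bd geometry} the $H^m$-norm is controlled by powers of $\Theta_Z$ and $D_\FF$; it therefore suffices to show $\Theta_Z^NP$ and $D_\FF^NP$ are bounded on all $H^m$, which the paper reduces to two algebraic identities --- $QP=\int\phi^{t*}(\phi^{-t*}Q\phi^{t*}A_t)\,dt\,\psi(D_\FF)$ for leafwise $Q$, and $\Theta_ZP=\int\phi^{t*}C_t\,dt\,\psi(D_\FF)$ with $C_t=\frac{d}{ds}A_{t-s}|_{s=0}$ --- showing both are again of the form~\eqref{P}. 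The Lie-derivative trick that proves the second identity is the operator-level counterpart of your change of variable: both trade transverse derivatives for $t$-derivatives of the compactly supported family. The operator-level version is cleaner here because it sidesteps the chart-gluing and holonomy technicalities you flag, handles an arbitrary family $A_t$ of leafwise differential operators, and delivers the $\AA$-seminorm estimates directly.
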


\begin{proof}
According to the proof of Proposition~\ref{p: functional calculus}, $\psi(D_\FF)$ defines a bounded operator on every $H^m(M;\Lambda\FF)$. Since moreover $\phi$ and $A$ are of $\R$-local bounded geometry, and $A$ is $\R$-compactly supported, it follows that $P$ also defines a bounded operator on every $H^m(M;\Lambda\FF)$.

Since $\Theta_Z\in\Diff^1_{\text{\rm ub}}(M;\Lambda\FF)$ is uniformly transversely elliptic (Propositions~\ref{p: Theta_Z is of bd geometry} and~\ref{p: Theta_Z is unif transv elliptic}) and $D_\FF\in\Diff^1_{\text{\rm ub}}(\FF;\Lambda\FF)$ is uniformly leafwise elliptic (Corollary~\ref{c: d_i,j, ... are of bd geom} and Proposition~\ref{p: D_0 is uniformly leafwise elliptic}), to get that $P$ is smoothing, it suffices to prove that $\Theta_Z^NP$ and $D_\FF^NP$ are of the form~\eqref{P} for all $N\in\N_0$. In turn, this follows by showing that $\Theta_ZP$ and $QP$ are of the form~\eqref{P} for any $Q\in\Diffub(\FF;\Lambda\FF)$.

We have
$$
QP=\int_\R\phi^{t*}B_t\,dt\,\psi(D_\FF)\;,
$$
where $B_t=\phi^{-t*}Q\phi^{t*}A_t$. Since $\phi^t$ is a foliated map, this defines a smooth family $B=\{\,B_t\mid t\in\R\,\}\subset\Diff(\FF;\Lambda\FF)$. Moreover $B$ is $\R$-compactly supported and of $\R$-local bounded geometry because $A$ is $\R$-compactly supported, and $\phi$, $Q$ and $A$ are of $\R$-local bounded geometry. Thus $QP$ is of the form~\eqref{P}.

Let $C=\{\,C_t\mid t\in\R\,\}\subset\Diff(\FF;\Lambda\FF)$ be the smooth family given by $C_t=\frac{d}{ds}A_{t-s}|_{s=0}$. Note that $C$ is of $\R$-local bounded geometry because the family $A$ is of $\R$-local bounded geometry and $\R$-compactly supported. Like in the proof of \cite[Proposition~6.1]{AlvKordy2008a}, we get
\begin{align*}
\Theta_ZP&=\left.\frac{d}{ds}\int_\R\phi^{t+s*}A_t\,dt\right|_{s=0}\,\psi(D_\FF)\\
&=\left.\frac{d}{ds}\int_\R\phi^{r*}A_{r-s}\,dr\right|_{s=0}\,\psi(D_\FF)=\int_\R\phi^{t*}C_t\,dt\,\psi(D_\FF)\;,
\end{align*}
which is of the form~\eqref{P}.

By~\eqref{|psi(D_0)|_m}, for any $N\in\N_0$ and $\psi\in\AA$, the operator $(1+\Delta_{\FF})^N\psi(D_\FF)$ extends to a bounded operator on every $H^m(M;\Lambda\FF)$ with
\begin{equation}\label{|(1+Delta_FF)^N psi(D_FF)|_m}
\|(1+\Delta_\FF)^N\psi(D_\FF)\|_m
\le\frac{1}{2\pi}\int_{-\infty}^\infty|(1-\partial^2_{\xi})^N\hat{\psi}(\xi)|\,e^{C_m|\xi|}\,d\xi\;.
\end{equation}
Hence, by the above argument, it can be easily seen that, for integers $m\le m'$, there are some $C,C'>0$ and $N\in\N_0$ such that
\begin{equation}\label{|P|_m,m'}
\|P\|_{m,m'}\le C'\int
|(\id-\partial^2_{\xi})^N\hat{\psi}(\xi)|\,e^{C|\xi|}\,d\xi
\le C'\|\psi\|_{\AA,C,2N}\;.
\end{equation}
 Here, $C$ depends on $m$ and $m'$, and $C'$ depends on $m$, $m'$ and $A$. Then the mapping $\psi\mapsto P_\psi$ of the statement is continuous.
\end{proof}

\begin{cor}\label{c: K_P}
The linear map
\[
\AA\to\Cinftyub(M^2;\Lambda\FF\boxtimes(\Lambda\FF^*\otimes\Omega M))\;,\quad\psi\mapsto K_{P_\psi}\;,
\]
is continuous.
\end{cor}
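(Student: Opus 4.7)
The plan is to obtain Corollary~\ref{c: K_P} as the composition of two already-established continuous maps: the operator-valued map $\psi \mapsto P_\psi$ from Proposition~\ref{p: P}, and the Schwartz kernel map from Proposition~\ref{p: Schwartz kernel with bd geometry}.

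First I would recall that by Proposition~\ref{p: P}, the assignment $\psi \mapsto P_\psi$ is continuous as a map
\[
\AA \to L\bigl(H^{-\infty}(M;\Lambda\FF),\, H^\infty(M;\Lambda\FF)\bigr),
\]
where the target carries the topology of bounded convergence (equivalently, as was implicit in the proof, the topology generated by the operator norms $\|\cdot\|_{m,m'}$ for $m \le m'$; see~\eqref{|P|_m,m'}). Since $\Lambda\FF$ is a Hermitian vector bundle of bounded geometry over $M$ (it is a subbundle of $\Lambda M$ picked out by the decomposition~\eqref{Lambda M}, and bounded geometry is preserved by the associated constructions of Example~\ref{ex: bundles of bd geom}), Proposition~\ref{p: Schwartz kernel with bd geometry} applies and yields a continuous linear map
\[
L\bigl(H^{-\infty}(M;\Lambda\FF),\, H^\infty(M;\Lambda\FF)\bigr) \to \Cinftyub\bigl(M^2;\Lambda\FF \boxtimes (\Lambda\FF^* \otimes \Omega M)\bigr),\quad A \mapsto K_A.
\]

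The composition of these two continuous linear maps is $\psi \mapsto K_{P_\psi}$, which is therefore continuous. This is exactly the statement of the corollary.

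There is no real obstacle here: the corollary is a formal consequence of Proposition~\ref{p: P} combined with Roe's uniform-$C^\infty$ Schwartz kernel theorem for manifolds of bounded geometry. The only point requiring a brief check is that the target topology of $P_\psi$ in Proposition~\ref{p: P} is indeed the one for which Proposition~\ref{p: Schwartz kernel with bd geometry} is stated, which was already arranged in the previous proof via the estimates~\eqref{|(1+Delta_FF)^N psi(D_FF)|_m} and~\eqref{|P|_m,m'}.
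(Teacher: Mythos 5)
Your proposal is correct and follows exactly the paper's own one-line argument: compose the continuity of $\psi \mapsto P_\psi$ from Proposition~\ref{p: P} with the continuity of the Schwartz kernel map from Proposition~\ref{p: Schwartz kernel with bd geometry}. The extra checks you flag (that $\Lambda\FF$ is of bounded geometry, and that the topologies match) are sensible but routine, and the paper treats them as implicit.
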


\begin{proof}
This follows from Propositions~\ref{p: P} and~\ref{p: Schwartz kernel with bd geometry}.
\end{proof}

Now, consider the particular case where $\psi_u(x)=e^{-ux^2}$, and the corresponding operators $P_u$ ($u>0$) on $H^\infty(M;\Lambda\FF)$. Let also
\[
P_\infty=\int_\R\phi^{t*}A_t\,dt\,\Pi_\FF
\]
on $H^\infty(M;\Lambda\FF)$, where $\Pi_\FF$ is the orthogonal projection to $\ker\Delta_\FF$.

\begin{cor}\label{c: P_infty}
$P_\infty$ is a smoothing operator.
\end{cor}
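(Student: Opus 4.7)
The strategy is to factor $P_\infty$ through one of the smoothing operators $P_u$ already handled by Proposition~\ref{p: P}. Since $\Pi_\FF$ is the orthogonal projection onto $\ker\Delta_\FF$, on which $e^{-u\Delta_\FF}$ acts as the identity, we have $e^{-u\Delta_\FF}\,\Pi_\FF=\Pi_\FF$ for every $u\ge 0$. Consequently, for any $u>0$,
\[
P_\infty
=\int_\R\phi^{t*}A_t\,dt\ \Pi_\FF
=\int_\R\phi^{t*}A_t\,dt\ e^{-u\Delta_\FF}\,\Pi_\FF
=P_u\,\Pi_\FF
\]
as operators on $H^\infty(M;\Lambda\FF)$.

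Next, Proposition~\ref{p: P} applied to $\psi_u(x)=e^{-ux^2}\in\AA$ (in its $d_\FF$-version, i.e.\ with $D_\FF$ in place of $D_0$, as justified by the discussion after Corollary~\ref{c: leafwise Hodge iso}) shows that $P_u$ extends to a continuous linear map $H^{-\infty}(M;\Lambda\FF)\to H^\infty(M;\Lambda\FF)$. It therefore suffices to extend $\Pi_\FF$ to a continuous endomorphism of $H^{-\infty}(M;\Lambda\FF)$. By Theorem~\ref{t: leafwise Hodge decomposition for d_FF} (equivalently, Corollary~\ref{c: leafwise Hodge iso for d_FF}) the projection $\Pi_\FF$ is continuous on $H^\infty(M;\Lambda\FF)$; combined with its $L^2$-self-adjointness and the usual duality between $H^{\pm\infty}$, this yields a continuous extension $\Pi_\FF:H^{-\infty}(M;\Lambda\FF)\to H^{-\infty}(M;\Lambda\FF)$ that coincides with the original projection on $L^2$. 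Composing, $P_\infty=P_u\,\Pi_\FF$ sends $H^{-\infty}(M;\Lambda\FF)$ continuously into $H^\infty(M;\Lambda\FF)$, so $P_\infty$ is smoothing, and Proposition~\ref{p: Schwartz kernel with bd geometry} automatically places its Schwartz kernel in $\Cinftyub(M^2;\Lambda\FF\boxtimes(\Lambda\FF^*\otimes\Omega M))$.

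The only technical point is the dual extension of $\Pi_\FF$ to $H^{-\infty}(M;\Lambda\FF)$, which rests on identifying $H^{-\infty}(M;\Lambda\FF)$ with the topological dual of $H^\infty(M;\Lambda\FF^*\otimes\Omega)$ and using the $L^2$-self-adjointness to reconcile the two definitions on their common domain; beyond this check the argument is an immediate composition of Proposition~\ref{p: P} with the extended $\Pi_\FF$.
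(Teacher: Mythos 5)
Your proof is correct and follows essentially the same route as the paper, which also factors $P_\infty=P_u\,\Pi_\FF$ and invokes Proposition~\ref{p: P} together with Theorem~\ref{t: leafwise Hodge decomposition for d_FF}. The only difference is that you spell out the duality step extending $\Pi_\FF$ to a continuous endomorphism of $H^{-\infty}(M;\Lambda\FF)$, which the paper leaves implicit.
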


\begin{proof}
This follows from Theorem~\ref{t: leafwise Hodge decomposition for d_FF} and Proposition~\ref{p: P} since $P_\infty=P_u\Pi_\FF$.
\end{proof}

\begin{prop}\label{p: P_u to P_infty}
$P_u\to P_\infty$ in $L(H^{-\infty}(M;\Lambda\FF),H^\infty(M;\Lambda\FF))$ as $u\uparrow\infty$.
\end{prop}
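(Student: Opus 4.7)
Plan: The strategy is to peel off, once and for all, a fixed smoothing factor that handles the $H^{-\infty}\to H^\infty$ map, and then exploit Theorem~\ref{t: leafwise Hodge decomposition for d_FF} to handle the convergence of the remaining piece on $H^\infty(M;\Lambda\FF)$.

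Fix any $u_0>0$ and consider $u>u_0$. Combining the semigroup identity $e^{-u\Delta_\FF}=e^{-u_0\Delta_\FF}e^{-(u-u_0)\Delta_\FF}$ with $e^{-u_0\Delta_\FF}\Pi_\FF=\Pi_\FF$ (which holds since $\Pi_\FF$ projects onto $\ker\Delta_\FF$, where $\Delta_\FF$ vanishes), we obtain the operator identity
\[
P_u - P_\infty \;=\; P_{u_0}\circ\bigl(e^{-(u-u_0)\Delta_\FF}-\Pi_\FF\bigr).
\]
By Proposition~\ref{p: P} applied with $\psi=\psi_{u_0}$ (in its $d_\FF$-version, corresponding to the $H^\infty(M;\Lambda\FF)$ setting of Theorem~\ref{t: leafwise Hodge decomposition for d_FF}), the $u$-independent operator $P_{u_0}$ is smoothing, and hence defines a continuous linear map $H^{-\infty}(M;\Lambda\FF)\to H^\infty(M;\Lambda\FF)$.

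For the remaining factor, Theorem~\ref{t: leafwise Hodge decomposition for d_FF} supplies the joint continuity of the map $(t,\alpha)\mapsto e^{-t\Delta_\FF}\alpha$ on $[0,\infty]\times H^\infty(M;\Lambda\FF)\to H^\infty(M;\Lambda\FF)$, with $e^{-\infty\Delta_\FF}=\Pi_\FF$. This forces $e^{-(u-u_0)\Delta_\FF}-\Pi_\FF\to 0$ in $L(H^\infty,H^\infty)$ as $u\uparrow\infty$. Composing with the fixed continuous map $P_{u_0}$ then yields the desired convergence of $P_u-P_\infty$ to $0$ in $L(H^{-\infty},H^\infty)$.

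The main obstacle, and the step most deserving of care, is verifying that the strong-type convergence produced by Theorem~\ref{t: leafwise Hodge decomposition for d_FF} really is convergence in the topology of bounded convergence on $L(H^\infty,H^\infty)$, and not merely pointwise. The key observation is that $[0,\infty]$ is compact, so the joint continuity supplies equicontinuity of $\{e^{-t\Delta_\FF}\}_{t\in[0,\infty]}$ as a family in $\End(H^\infty(M;\Lambda\FF))$; together with the continuity of $\Pi_\FF$ on $H^\infty(M;\Lambda\FF)$, this promotes the limit at $t=\infty$ to uniform smallness on neighborhoods. Since the smoothing operator $P_{u_0}$ sends any bounded $B\subset H^{-\infty}(M;\Lambda\FF)$ to a bounded subset of $H^\infty(M;\Lambda\FF)$, the composition $(e^{-(u-u_0)\Delta_\FF}-\Pi_\FF)\circ P_{u_0}$ is then uniformly small on $B$ for large $u$, which is exactly the convergence in the topology of bounded convergence on $L(H^{-\infty},H^\infty)$ that we want.
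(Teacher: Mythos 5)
Your displayed identity $P_u-P_\infty=P_{u_0}\bigl(e^{-(u-u_0)\Delta_\FF}-\Pi_\FF\bigr)$ is correct, and it is exactly the factorization the paper uses (with $u_0=1$). But read in the usual order, the heat-difference factor acts \emph{first}, so its domain is $H^{-\infty}(M;\Lambda\FF)$, not $H^\infty(M;\Lambda\FF)$. To conclude from this that $P_u-P_\infty\to 0$ in $L(H^{-\infty},H^\infty)$, you therefore need $e^{-(u-u_0)\Delta_\FF}-\Pi_\FF\to 0$ in $\End(H^{-\infty}(M;\Lambda\FF))$, whereas Theorem~\ref{t: leafwise Hodge decomposition for d_FF} only gives you convergence in $\End(H^{\infty}(M;\Lambda\FF))$. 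Your plan (``smoothing factor handles $H^{-\infty}\to H^\infty$, then the remaining piece lives on $H^\infty$'') and your final paragraph both implicitly reverse the composition: the operator $(e^{-(u-u_0)\Delta_\FF}-\Pi_\FF)\circ P_{u_0}$ you analyze at the end is \emph{not} equal to $P_u-P_\infty$, since $\int_\R\phi^{t*}f(t)\,dt$ does not commute with $\Delta_\FF$ (a foliated flow is not a leafwise isometry in general). So the reasoning ``$P_{u_0}$ sends bounded sets in $H^{-\infty}$ into bounded sets in $H^\infty$, on which the heat-difference is uniformly small'' proves smallness of the wrong operator.

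The gap is filled exactly as the paper does, in one line: $e^{-u\Delta_\FF}$ and $\Pi_\FF$ are bounded self-adjoint on $L^2$, and more generally each acts boundedly on every $H^m$, with the action on $H^{-m}$ being the transpose of the action on $H^m$. Convergence to $0$ in $\End(H^\infty)$ therefore passes to $\End(H^{-\infty})$ by taking transposes on each Sobolev level. After that, composing on the left with the fixed continuous map $P_{u_0}:H^{-\infty}\to H^\infty$ does indeed give convergence in $L(H^{-\infty},H^\infty)$ in the topology of bounded convergence, just as you argue. Your aside about needing bounded (not merely strong) convergence, and invoking the joint continuity on the compact parameter space $[0,\infty]$ to get equicontinuity, is a legitimate concern and is essentially what makes the passage from Theorem~\ref{t: leafwise Hodge decomposition for d_FF} to convergence in $\End(H^\infty)$ work; but it does not by itself remedy the composition-order issue.
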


\begin{proof}
By Theorem~\ref{t: leafwise Hodge decomposition}, $e^{-u\Delta_\FF}-\Pi_\FF\to0$ in $\End(H^\infty(M;\Lambda\FF))$ as $u\uparrow\infty$. Therefore this convergence also holds in $\End(H^{-\infty}(M;\Lambda\FF))$, taking dual spaces and dual operators. Hence
\[
P_u-P_\infty=P_1(e^{-(u-1)\Delta_\FF}-\Pi_\FF)\to0
\]
in $L(H^{-\infty}(M;\Lambda\FF),H^\infty(M;\Lambda\FF))$ as $u\uparrow\infty$.
\end{proof}

\begin{cor}\label{c: K_P_u to K_P_infty}
$K_{P_u}\to K_{P_\infty}$ in $\Cinftyub(M^2;\Lambda\FF\boxtimes(\Lambda\FF^*\otimes\Omega M))$ as $u\uparrow\infty$.
\end{cor}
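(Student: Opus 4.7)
The plan is to argue this as an immediate corollary of the preceding convergence statement, transported through Roe's Schwartz kernel continuity result.

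First I would recall the setup: by Corollaries \ref{c: K_P} and \ref{c: P_infty}, both $P_u$ (for $0 < u < \infty$) and $P_\infty$ lie in $L(H^{-\infty}(M;\Lambda\FF), H^\infty(M;\Lambda\FF))$, so their Schwartz kernels $K_{P_u}$ and $K_{P_\infty}$ are well-defined elements of $\Cinftyub(M^2;\Lambda\FF\boxtimes(\Lambda\FF^*\otimes\Omega M))$ by Proposition \ref{p: Schwartz kernel with bd geometry}.

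Next I would apply Proposition \ref{p: P_u to P_infty}, which asserts precisely that $P_u \to P_\infty$ in $L(H^{-\infty}(M;\Lambda\FF), H^\infty(M;\Lambda\FF))$ as $u \uparrow \infty$. Since Roe's Schwartz kernel theorem (Proposition \ref{p: Schwartz kernel with bd geometry}) provides a continuous linear map
\[
L(H^{-\infty}(M;\Lambda\FF), H^\infty(M;\Lambda\FF)) \to \Cinftyub(M^2;\Lambda\FF\boxtimes(\Lambda\FF^*\otimes\Omega M))\;,\quad A \mapsto K_A\;,
\]
continuity immediately transports the operator-level convergence to convergence of the Schwartz kernels in the uniform $C^\infty$ topology.

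There is no real obstacle here; the entire content has been absorbed into the two preceding results. The only reason to write it out is to make clear that the bounded-geometry framework set up in Section \ref{s: bd geom} is what allows one to pass from convergence of operators between Sobolev spaces to convergence of their kernels in $\Cinftyub$. One might emphasize that the target topology is strong (uniform $C^\infty$ control on all of $M^2$), which would have been false without the bounded geometry hypothesis on $\FF$.
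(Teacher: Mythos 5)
Your proof is correct and follows exactly the paper's own argument: apply Proposition~\ref{p: P_u to P_infty} for the operator-norm convergence, then use the continuity of the Schwartz kernel map from Proposition~\ref{p: Schwartz kernel with bd geometry} to transport it to $\Cinftyub$. The additional remarks about the role of bounded geometry are accurate but not needed for the proof.
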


\begin{proof}
This follows from Propositions~\ref{p: Schwartz kernel with bd geometry} and~\ref{p: P_u to P_infty}.
\end{proof}

From now on, consider only the case where $A=f\in\Cinftyc(\R)$, obtaining the smoothing operator
\begin{equation}\label{P with f}
P=\int_\R\phi^{t*}\,f(t)\,dt\,\psi(D_\FF)\;,
\end{equation}
as well as its versions, $P_u$ if $\psi_u$ is used, and $P_\infty$ if $\Pi_\FF$ is used. The proof of \cite[Proposition~6.1]{AlvKordy2008a} clearly extends to the open manifold case, showing the following improvement of~\eqref{|P|_m,m'}.

\begin{prop}\label{p: |P|_m,m'}
For any compact $I\subset\R$ containing $\supp f$, and for all $m,m'\in\N_0$, there are some $C,C'>0$ and $N\in\N_0$, depending on $m$, $m'$ and $I$, such that
\[
\|P\|_{m,m'} \le C'\,\|f\|_{C^N,I}\|\psi\|_{\AA,C,N}\;.
\]
\end{prop}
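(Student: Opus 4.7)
The plan is to adapt the argument of \cite[Proposition~6.1]{AlvKordy2008a} to the non-compact setting, replacing compactness by the $\R$-local bounded geometry of $\phi$ and the bounded geometry of $\FF$. The key improvement over the cruder bound~\eqref{|P|_m,m'} is to decouple the two factors: powers of $\Theta_Z$ will be moved onto derivatives of $f$ via integration by parts in $t$, while powers of $D_\FF$ will interact with $\psi$ through the functional calculus.

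I would first record that, by Corollary~\ref{c: d_i,j, ... are of bd geom} and Propositions~\ref{p: D_0 is uniformly leafwise elliptic}, \ref{p: Theta_Z is of bd geometry} and~\ref{p: Theta_Z is unif transv elliptic}, the operators $D_\FF$ and $\Theta_Z$ are of bounded geometry and uniformly leafwise and uniformly transversely elliptic, respectively. Hence, as recalled at the end of Section~\ref{s: Riem folns of bd geometry}, the Sobolev norm $\|u\|_{m'}$ is equivalent to $\langle(1+D_\FF^{*}D_\FF+\Theta_Z^{*}\Theta_Z)^{m'}u,u\rangle^{1/2}$. Expanding and commuting $D_\FF$ past $\Theta_Z$ (which only adds lower-order corrections of bounded geometry), this reduces the bound on $\|Pu\|_{m'}$ to bounds on $\|D_\FF^a\Theta_Z^bPu\|_{L^2}$ for every $a,b\in\N_0$ with $a+b\le m'$.

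Next, as in the computation of $\Theta_ZP$ in the proof of Proposition~\ref{p: P}, I would use the identity $\frac{d}{dt}\phi^{t*}=\Theta_Z\phi^{t*}$ on $H^\infty(M;\Lambda\FF)$, combined with integration by parts in $t$ (valid because $f$ is compactly supported in $I$), to obtain $\Theta_Z^bP=(-1)^bP_{f^{(b)}}$, where $P_{f^{(b)}}$ is the operator~\eqref{P with f} with $f$ replaced by $f^{(b)}$. Setting $Q_t^{(a)}:=\phi^{-t*}D_\FF^a\phi^{t*}$, one has $D_\FF^a\phi^{t*}=\phi^{t*}Q_t^{(a)}$, and by Proposition~\ref{p: X is C^infty-uniformly bd <=> phi^t is of R-local bd geom} and Corollary~\ref{c: d_i,j, ... are of bd geom} the family $\{Q_t^{(a)}\}_{t\in I}$ is a smooth $I$-compactly supported family of leafwise differential operators of order $a$ of uniformly bounded geometry; moreover each $\phi^{t*}$ is uniformly bounded on $L^2(M;\Lambda\FF)$ for $t\in I$ by the same reason. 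This yields a constant $C_{a,I}>0$ with
\[
\|D_\FF^a\Theta_Z^bPu\|_{L^2}\le C_{a,I}\,|I|\,\|f^{(b)}\|_{C^0,I}\,\|\psi(D_\FF)u\|_a\;.
\]

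Finally, I would control $\|\psi(D_\FF)u\|_a$ by an $\AA$-seminorm of $\psi$ times $\|u\|_m$: for $a\le m$, this follows directly from Proposition~\ref{p: functional calculus} and~\eqref{|psi(D_0)|_m}; for $a>m$, one chooses $N_0\in\N_0$ with $m+2N_0\ge a$ and applies~\eqref{|(1+Delta_FF)^N psi(D_FF)|_m} to $\tilde\psi(x):=(1+x^2)^{N_0}\psi(x)\in\AA$, combined with uniform elliptic regularity for the leafwise Laplacian $\Delta_\FF$, to obtain $\|\psi(D_\FF)u\|_a\le C\|\psi\|_{\AA,C_m,2N_0}\|u\|_m$. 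Since $\|f^{(b)}\|_{C^0,I}\le\|f\|_{C^{m'},I}$ for $b\le m'$, summing over $a+b\le m'$ produces the claimed estimate with $N:=\max\{m',2N_0\}$ and $C,C'$ absorbing all the uniform constants from bounded geometry, which depend on $m$, $m'$ and $I$. The main difficulty will be the uniform control of $Q_t^{(a)}$ and $\phi^{t*}$ for $t\in I$; once this is granted (via the $\R$-local bounded geometry of $\phi$), the rest is routine bookkeeping of constants.
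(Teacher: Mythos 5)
Your strategy is the right one, and it is essentially what the paper refers to when it cites \cite[Proposition~6.1]{AlvKordy2008a}: replace the Sobolev norm $\|\cdot\|_{m'}$ by the equivalent norm built from $D_\FF$ and $\Theta_Z$, push $\Theta_Z^b$ onto derivatives of $f$ via $\Theta_Z^bP=(-1)^bP_{f^{(b)}}$, and push $D_\FF^a$ through $\phi^{t*}$ at the cost of the conjugated leafwise operators $Q_t^{(a)}=\phi^{-t*}D_\FF^a\phi^{t*}$, which are uniformly of bounded geometry for $t\in I$ by $\R$-local bounded geometry. Steps 1--5 are sound.

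There is, however, a genuine gap in your last step. You pass from $\|Q_t^{(a)}\psi(D_\FF)u\|_{L^2}$ to $\|\psi(D_\FF)u\|_a$ (the full Sobolev norm) and then claim $\|\psi(D_\FF)u\|_a\le C\,\|\psi\|_{\AA,C_m,2N_0}\,\|u\|_m$ for $a>m$, invoking ``uniform elliptic regularity for the leafwise Laplacian $\Delta_\FF$.'' This cannot work as stated: $\Delta_\FF$ is only uniformly \emph{leafwise} elliptic, not elliptic, so $\psi(D_\FF)$ provides no transverse regularization whatsoever. In particular $\psi(D_\FF):H^m\to H^a$ is not bounded for $a>m$ in general, since the full $H^a$-norm controls $a$ transverse derivatives and $\psi(D_\FF)$ commutes with nothing transverse. (A toy example on $\mathbb{T}^2$ with the circle foliation, where $\psi(D_\FF)$ is a Fourier multiplier in the leafwise frequency $l$ only, makes this explicit: $(1+k^2+l^2)^a|\psi(2\pi l)|^2$ is not bounded by a constant times $(1+k^2+l^2)^m$ when $a>m$.) The fix is to exploit that $Q_t^{(a)}$ is a \emph{leafwise} differential operator of order $a$: one only needs leafwise $a$-regularity of $\psi(D_\FF)u$, not transverse. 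Concretely, for $2N_0\ge a$ the operator $Q_t^{(a)}(1+\Delta_\FF)^{-N_0}$ is bounded on $L^2(M;\Lambda\FF)$ uniformly in $t\in I$ by uniform leafwise ellipticity and bounded geometry, so
\[
\|Q_t^{(a)}\psi(D_\FF)u\|_{L^2}\le C\,\|(1+\Delta_\FF)^{N_0}\psi(D_\FF)u\|_{L^2}=C\,\|\tilde\psi(D_\FF)u\|_{L^2}\le C\,\|\tilde\psi(D_\FF)\|_{m,0}\,\|u\|_m
\]
with $\tilde\psi(x)=(1+x^2)^{N_0}\psi(x)\in\AA$, and the last factor is controlled by a $\|\cdot\|_{\AA,C,2N_0}$-seminorm of $\psi$ via~\eqref{|(1+Delta_FF)^N psi(D_FF)|_m}. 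Replacing your step 6 by this observation closes the gap and yields the stated estimate with $N=\max\{m',2N_0\}$.
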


Here, $\|\cdot\|_{C^N,I}$ is the semi-norm on $C^N(\R)$ defined by
\[
\|f\|_{C^N,I}=\max\{\,|f^{(m)}(x)|\mid x\in I,\ m=0,\dots,N\,\}\;.
\]

\section{Description of some Schwartz kernels}\label{s: Schwartz kernels}

Here, we will keep the setting of Section~\ref{s: smoothing operators}. The transverse vector field $\overline Z$ defines the structure of a transversely complete $\R$-Lie foliation on $\FF$ (Section~\ref{ss: Riem folns}). The corresponding Fedida's description of $\FF$ is given by a regular covering map $\pi:\widetilde M\to M$, a holonomy homomorphism $h:\Gamma:=\Aut(\pi)\to\R$, and the developing map $D:\widetilde M\to\R$ (Section~\ref{ss: Riem folns}). The lift of the bundle-like metric $g$ to $\widetilde M$ is a bundle-like metric $\tilde g$ of $\widetilde\FF=\pi^*\FF$, and let $\tilde\phi:\widetilde M\times\R\to\widetilde M$ and $\widetilde Z\in\fX_{\text{\rm ub}}(\widetilde M,\widetilde\FF)$ be the lifts of $\phi$ and $Z$. Then $\tilde g$ and $\widetilde Z$ are $\Gamma$-invariant, and $\tilde\phi$ is $\Gamma$-equivariant. Moreover $\widetilde Z$ is $D$-projectable, and we can assume that $D_*\widetilde Z=\partial_x\in\fX(\R)$, where $x$ denotes the standard global coordinate of $\R$. Thus $\phi$ induces via $D$ the flow $\bar\phi$ on $\R$ defined by $\bar\phi^t(x)=t+x$. Considering the equivalence between the holonomy pseudogroup and the pseudogroup generated by the action of $\Hol\FF$ on $\R$ by translations, this $\bar\phi$ corresponds to the equivariant local flow $\bar\phi$ induced by $\phi$ on the holonomy pseudogroup. Since $\bar\phi^t$ preserves every $\Hol\FF$-orbit in $\R$ if and only if $t\in\Hol\FF$, it follows that $\phi^t$ preserves every leaf of $\FF$ if and only if $t\in\Hol\FF$.

For any $\psi\in\AA$ and $f\in\Cinftyc(\R)$, we have the smoothing operator $P$ given by~\eqref{P with f}, and a similar smoothing operator $\widetilde P$ defined with $\tilde\phi$ and $\widetilde\FF$ instead of $\phi$ and $\FF$. We are going to describe their smoothing kernels under some assumptions.

Let $\fG=\Hol(M,\FF)$ and $\widetilde\fG=\Hol(\widetilde M,\widetilde\FF)$, whose source and range maps are denoted by $\bfs,\bfr:\fG\to M$ and $\tilde\bfs,\tilde\bfr:\widetilde\fG\to\widetilde M$ (Section~\ref{ss: holonomy groupoid}). Since the leaves of $\FF$ and $\widetilde\FF$ have trivial holonomy groups, the smooth immersions $(\bfr,\bfs):\fG\to M^2$ and $(\tilde\bfr,\tilde\bfs):\widetilde\fG\to\widetilde M^2$ are injective, with images $\RR_\FF$ and $\RR_{\widetilde\FF}$. Via these injections, the restriction $\pi\times\pi:\RR_{\widetilde\FF}\to\RR_\FF$ corresponds to the Lie groupoid homomorphism $\pi_\fG:=\Hol(\pi):\widetilde\fG\to\fG$ (Section~\ref{ss: fol maps}), which is a covering map with $\Aut(\pi_\fG)\equiv\Gamma$. In fact, since $\widetilde\FF$ is defined by the fiber bundle $D$, we get that $\RR_{\widetilde\FF}$ is a regular submanifold of $\widetilde M^2$, and $(\tilde\bfr,\tilde\bfs):\widetilde\fG\to\RR_{\widetilde\FF}$ is a diffeomorphism. We may write $\fG\equiv\RR_\FF$ and $\widetilde\fG\equiv\RR_{\widetilde\FF}$.

Consider the $C^\infty$ vector bundles, $S=\bfr^*\Lambda\FF\otimes\bfs^*(\Lambda\FF\otimes\Omega\FF)$ over $\fG$ and $\widetilde S={\tilde\bfr^*\Lambda\widetilde\FF}\otimes{\tilde\bfs^*(\Lambda\widetilde\FF\otimes\Omega\widetilde\FF)}$ over $\widetilde\fG$. Note that $\widetilde S\equiv\pi_\fG^*S$, and any $k\in C^\infty(\fG;S)$ lifts via $\pi_\fG$ to a section $\tilde k\in C^\infty(\widetilde{\fG};\widetilde S)$. Since $\pi$ restricts to diffeomorphisms of the leaves of $\widetilde\FF$ to the leaves of $\FF$, it follows that $\tilde k\in C^\infty_{\text{\rm p}}(\widetilde{\fG};\widetilde S)$ if and only if $k\in C^\infty_{\text{\rm p}}(\fG;S)$.

For any $\psi\in\RR$, the collection of Schwartz kernels $k_L:=K_{\psi(D_L)}$, for all leaves $L$ of $\FF$, defines a section $k=k_\psi$ of $S$ called {\em leafwise Schwartz kernel\/}, which {\em a priori\/} may not be continuous. This also applies to the operators $\psi(D_{\widetilde L})$ on the leaves $\widetilde L$ of $\widetilde\FF$, obtaining the leafwise Schwartz kernel $\tilde k=\tilde k_\psi$, which is a possibly discontinuous section of $\widetilde S$.

\begin{prop}\label{p: k in C^infty_p(fG;S)}
If $\hat\psi\in\Cinftyc(\R)$, then $k_\psi\in C^\infty_{\text{\rm p}}(\fG;S)$, and the global action of $k_\psi$ on $\Cinftyc(M;\Lambda\FF)$ {\rm(}Section~\ref{ss: global action}\/{\rm)} agrees with the restriction of the operator $\psi(D_\FF)$ on $H^\infty(M;\Lambda\FF)$ defined by Proposition~\ref{p: functional calculus} and~\eqref{Lambda M}.
\end{prop}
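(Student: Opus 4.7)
The plan is to construct $k_\psi$ leaf by leaf, use finite leafwise propagation speed to control its support, and then establish transverse smoothness using the bounded-geometry structure of $\FF$; the identification with the global action follows from uniqueness for the leafwise hyperbolic equation. For each leaf $L$ of $\FF$, the leaves form a family of complete Riemannian manifolds of equi-bounded geometry (Remark~\ref{r: equi-bounded geometry}), so $D_L = d_L+\delta_L$ is an essentially self-adjoint uniformly elliptic generalized Dirac operator on $L$, and the leafwise versions of Propositions~\ref{p: |e^itD_0 alpha|_m},~\ref{p: functional calculus} and~\ref{p: Schwartz kernel with bd geometry} give that $\psi(D_L)$ is smoothing with kernel $K_L\in\Cinftyub(L^2;\Lambda L\boxtimes(\Lambda L^*\otimes\Omega L))$, uniformly in $L$. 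Assembling the $K_L$ via the injection $\fG\equiv\RR_\FF$ yields a section $k_\psi$ of $S$ over $\fG$ that is smooth along each fiber $(\bfr,\bfs)^{-1}(L^2)$. Writing $\supp\hat\psi\subset[-R,R]$, the leafwise Fourier inversion $\psi(D_L)=(2\pi)^{-1}\int_{-R}^{R}\hat\psi(\xi)\,e^{i\xi D_L}\,d\xi$ combined with the finite propagation speed~\eqref{leafwise unit propagation speed} applied on each leaf forces $\supp K_L\subset\{d_L\le R\}$ uniformly, and hence $\supp k_\psi\subset\overline{\Pen}_{\bfr}(\fG^{(0)},R)$.

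The main obstacle is transverse smoothness of $k_\psi$ at an arbitrary $\gamma_0\in\fG$, which leafwise analysis alone does not supply. To handle it I would fix a leafwise path $c_0$ representing $\gamma_0$, partition $[0,1]$ so that each sub-path sits in a normal foliated chart $(U_{p_i},x_{p_i})$ provided by Theorem~\ref{t: foln of bd geometry}, and chain these charts together so as to cover the full propagation region dictated by $R$. In every such chart the splitting $x_{p_i}=(x'_{p_i},x''_{p_i})$ presents $\FF$ as the product of a transversal $B'$ with plaques $B''$, and the uniform bounds on the coefficients $g_{ij},g^{ij}\in C^\infty(B'\times B'')$ realize $D_\FF$ as a smooth family, parametrized by $x'_{p_i}\in B'$, of uniformly elliptic operators on $B''$. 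Propagating the leafwise wave equation in small time steps through this chain (each step staying inside its chart by finite propagation speed) and invoking smooth dependence of solutions of hyperbolic equations on the transverse parameter, I would deduce that $K_L$ varies smoothly with $x'_{p_i}$ near $\gamma_0$, yielding $k_\psi\in C^\infty_{\text{\rm p}}(\fG;S)$.

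The identification of the global action is essentially formal. Given $\alpha\in\Cinftyc(M;\Lambda\FF)$, the section $\beta_t=e^{itD_\FF}\alpha$ solves $\partial_t\beta_t=iD_\FF\beta_t$ with $\beta_0=\alpha$, and because $D_\FF$ is a leafwise differential operator and each leaf is complete, the restriction $\beta_t|_L$ solves the analogous hyperbolic equation on $L$; uniqueness then forces $\beta_t|_L=e^{itD_L}(\alpha|_L)$. Applying the Fourier inversion $\psi(D_\FF)=(2\pi)^{-1}\int\hat\psi(t)\,e^{itD_\FF}\,dt$ leaf by leaf gives $\psi(D_\FF)\alpha|_L=\psi(D_L)(\alpha|_L)$, and recognizing the resulting leafwise integral as the global action of $k_\psi$ on $\alpha$ in the sense of Section~\ref{ss: global action} completes the proof. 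All parts reduce to uniform leafwise facts once the transverse-smoothness step, which is the principal difficulty, has been settled using the bounded-geometry structure in foliated charts.
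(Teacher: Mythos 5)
Your proposal correctly reconstructs the argument of Roe~\cite[Theorem~2.1]{Roe1987}, which is precisely what the paper's one-line proof cites (modified only by using $C^\infty_{\text{\rm p}}(\fG;S)$ in place of $\Cinftyc(\fG;S)$ when $M$ is noncompact). The leafwise construction of $k_\psi$, the finite-propagation-speed bound on its support, the transverse smoothness via parametrized hyperbolic regularity in normal foliated charts, and the identification with the global action through leafwise restriction and Fourier inversion are exactly the ingredients of Roe's proof, so your approach is essentially the same as the paper's.
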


\begin{proof}
This follows with the arguments of \cite[Theorem~2.1]{Roe1987}, using $C^\infty_{\text{\rm p}}(\fG;S)$ instead of $\Cinftyc(\fG;S)$ when $M$ is not compact.
\end{proof}

\begin{rem}\label{r: k in C^infty_p(fG;S)}
In Proposition~\ref{p: k in C^infty_p(fG;S)}, more precisely, if $\supp\hat\psi\subset[-R,R]$ for some $R>0$, then $\supp k_\psi\subset\overline{\Pen}_\FF(\fG^{(0)},R)$ by~\eqref{supp hat psi subset[-R,R] => supp K_psi(D_z) subset ...}. Hence $\supp\psi(D_\FF)\alpha\subset\overline{\Pen}_\FF(\supp\alpha,R)$ for all $\alpha\in H^{-\infty}(M;\Lambda\FF)$ by Remark~\ref{r: supp K_A subset r-penumbra <=> supp Au subset r-penumbra for all u}.
\end{rem}

Suppose for a while that $\hat\psi\in\Cinftyc(\R)$. Then Proposition~\ref{p: k in C^infty_p(fG;S)} also applies to $\widetilde\FF$, obtaining that $\tilde k\in C^\infty_{\text{\rm p}}(\widetilde\fG;\widetilde S)$, and the global action of $\tilde k$ on $\Cinftyc(\widetilde M;\Lambda\widetilde\FF)$ is the restriction of the operator $\psi(D_{\widetilde\FF})$ on $H^\infty(\widetilde M;\Lambda\widetilde\FF)$ defined by Proposition~\ref{p: functional calculus} and~\eqref{Lambda M}. Indeed, since $\pi$ restricts to a diffeomorphism between the leaves of $\widetilde\FF$ and the leaves of $\FF$, we get that $\tilde k$ is the lift of $k$, and therefore the diagram
\begin{equation}\label{psi(D widetilde FF)}
\begin{CD}
\Cinftyc(\widetilde M;\Lambda\widetilde\FF) & @>{\psi(D_{\widetilde\FF})}>> & \Cinftyc(\widetilde M;\Lambda\widetilde\FF)\\
@V{\pi_*}VV & & @VV{\pi_*}V\\
\Cinftyc(M;\Lambda\FF) &@>{\psi(D_{{\FF}})}>> & \Cinftyc(M;\Lambda\FF)
\end{CD}
\end{equation}
is commutative, where
\begin{equation}\label{pi_*}
\pi_*\tilde\alpha\equiv\sum_{\gamma\in\Gamma}T_\gamma^*\tilde\alpha
\end{equation}
for all $\tilde\alpha\in\Cinftyc(\widetilde M;\Lambda\widetilde\FF)$, using the notation $T_\gamma$ for the action of every $\gamma\in\Gamma$ on $\widetilde M$. Locally, the series of~\eqref{pi_*} only has a finite number of nonzero terms. The same expression also defines $\pi_*:C^{-\infty}_{\text{\rm c}}(\widetilde M;\Lambda\widetilde\FF)\to C^{-\infty}_{\text{\rm c}}(M;\Lambda\FF)$.

Take some $R>0$ such that $\supp\hat\psi\subset[-R,R]$. By Remark~\ref{r: k in C^infty_p(fG;S)} and since $\phi$ is of $\R$-local bounded geometry, there is some $R'>0$ such that $\supp P\alpha\subset\overline{\Pen}(\supp\alpha,R')$ for all $\alpha\in H^{-\infty}(M;\Lambda\FF)$. Thus $P$ defines a continuous homomorphism $C^{-\infty}_{\text{\rm c}}(M;\Lambda\FF)\to\Cinftyc(M;\Lambda\FF)$. Similarly, $\widetilde P$ defines a continuous homomorphism $C^{-\infty}_{\text{\rm c}}(\widetilde M;\Lambda\widetilde\FF)\to\Cinftyc(\widetilde M;\Lambda\widetilde\FF)$. Moreover the commutativity of~\eqref{psi(D widetilde FF)} yields the commutativity of the diagram
\begin{equation}\label{widetilde P pi_* = pi_* P}
\begin{CD}
C^{-\infty}_{\text{\rm c}}(\widetilde M;\Lambda\widetilde\FF) & @>{\widetilde P}>> 
& \Cinftyc(\widetilde M;\Lambda\widetilde\FF)\phantom{\;.}\\
@V{\pi_*}VV & & @VV{\pi_*}V\\
C^{-\infty}_{\text{\rm c}}(M;\Lambda\FF) &@>P>> & \Cinftyc(M;\Lambda\FF)\;.
\end{CD}
\end{equation}

Let $\widetilde\Lambda=D^*dx\equiv dx$, which is a transverse invariant volume form of $\widetilde\FF$ defining the same transverse orientation as $\overline{\widetilde Z}$. Since $\widetilde\Lambda$ is $\Gamma$-invariant by the $h$-equivariance of $D$, it defines a transverse volume form $\Lambda$ of $\FF$, which defines the same transverse orientation as $\overline Z$. These $\widetilde\Lambda$ and $\Lambda$ define transverse invariant densities $|\widetilde\Lambda|$ and $|\Lambda|$ of $\widetilde\FF$ and $\FF$. 

\begin{prop}\label{p: Schwartz kernel}
Let $\psi\in\AA$ and $\tilde p,\tilde q\in\widetilde M$ over $p,q\in M$. Then, writing $t_{\tilde p,\tilde q}=D(\tilde q)-D(\tilde p)$ and using the identity $\widetilde S_{(\tilde p,\tilde q)}\equiv S_{(p,q)}$, we have\footnote{The leafwise part of the density of $K_P(\cdot,q)$ is given by the density of $\tilde k(\cdot,\tilde q)$.}
\[
K_P(p,q)
\equiv\sum_{\gamma\in\Gamma}
T_\gamma^*\,\tilde\phi^{t_{\tilde p,\tilde q}-h(\gamma)*}
\tilde k\big(T_\gamma\tilde\phi^{t_{\tilde p,\tilde q}-h(\gamma)}(\tilde p),\tilde q\big)\,f(t_{\tilde p,\tilde q})\,|\Lambda|(q)\;,
\]
defining a convergent series in $\Cinftyub(M^2;S)$.
\end{prop}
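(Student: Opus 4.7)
The plan is to lift the computation to the regular $\Gamma$-cover $\widetilde M$, where $\widetilde\FF$ is globally given by the fibers of the developing map $D:\widetilde M\to\R$ and everything is transparent, compute the Schwartz kernel of the lifted operator $\widetilde P$ there, and then push the result down to $M$ using the commutative diagram~\eqref{widetilde P pi_* = pi_* P}. Since $D_*\widetilde Z=\partial_x$, the lifted flow $\tilde\phi^s$ sends the leaf $D^{-1}(x)$ to $D^{-1}(x+s)$, which is precisely the feature that decouples the transverse direction from the leafwise one.

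First I would compute $K_{\widetilde P}$. For $\tilde\alpha\in\Cinftyc(\widetilde M;\Lambda\widetilde\FF)$, writing out $\widetilde P\tilde\alpha$ using the leafwise kernel $\tilde k$ of $\psi(D_{\widetilde\FF})$ gives
\[
(\widetilde P\tilde\alpha)(\tilde p)=\int_\R f(s)\,\tilde\phi^{s*}_{\tilde p}\!\int_{\widetilde L_{\tilde\phi^s(\tilde p)}}\tilde k(\tilde\phi^s(\tilde p),\tilde q')\,\tilde\alpha(\tilde q')\,ds\;.
\]
Reparametrizing $(s,\tilde q')\mapsto\tilde q:=\tilde q'$ with $s=D(\tilde q)-D(\tilde p)=t_{\tilde p,\tilde q}$ forced, and noting that $\widetilde\Lambda=D^*dx$ makes $ds\otimes d\mu_{\widetilde L}$ coincide with the Riemannian density on $\widetilde M$ factored as $|\widetilde\Lambda|$ times the leafwise density, one reads off
\[
K_{\widetilde P}(\tilde p,\tilde q)=f(t_{\tilde p,\tilde q})\,\tilde\phi^{t_{\tilde p,\tilde q}*}_{\tilde p}\tilde k(\tilde\phi^{t_{\tilde p,\tilde q}}(\tilde p),\tilde q)\,|\widetilde\Lambda|(\tilde q)\;,
\]
the leafwise density carried by $\tilde k(\cdot,\tilde q)$ combining with $|\widetilde\Lambda|(\tilde q)$ to form the full Riemannian density at $\tilde q$, as the footnote requires.

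Next I would descend to $M$. Applying the commutative diagram~\eqref{widetilde P pi_* = pi_* P} to $\tilde\alpha\in\Cinftyc(\widetilde M;\Lambda\widetilde\FF)$ supported in a fundamental domain, and unfolding $(\pi_*\tilde\beta)(p)\equiv\sum_\gamma\tilde\beta(T_\gamma\tilde p)$ from~\eqref{pi_*}, a standard test-section argument yields
\[
K_P(p,q)\equiv\sum_{\gamma\in\Gamma}T_\gamma^*\,K_{\widetilde P}(T_\gamma\tilde p,\tilde q)\;,
\]
with $T_\gamma^*$ providing the identification $\Lambda\widetilde\FF_{T_\gamma\tilde p}\to\Lambda\widetilde\FF_{\tilde p}\equiv\Lambda\FF_p$. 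Substituting Step~1 and invoking $D(T_\gamma\tilde p)=h(\gamma)+D(\tilde p)$ (so $t_{T_\gamma\tilde p,\tilde q}=t_{\tilde p,\tilde q}-h(\gamma)$), the $\Gamma$-equivariance $\tilde\phi^s\circ T_\gamma=T_\gamma\circ\tilde\phi^s$, and the $\Gamma$-invariance $|\widetilde\Lambda|(\tilde q)\equiv|\Lambda|(q)$ produces the claimed formula after tracking the composition $T_\gamma^*\,\tilde\phi^{(t_{\tilde p,\tilde q}-h(\gamma))*}$ and its source/target fibers.

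The principal obstacle is the convergence in the uniform Fr\'echet topology of $\Cinftyub(M^2;S)$. Compactness of $\supp f$ restricts the nonzero $\gamma$'s to those with $h(\gamma)$ in a bounded subset of $\R$, but this set can meet $\Hol\FF=h(\Gamma)$ infinitely often when the holonomy group is dense in $\R$, so the sum is a genuine infinite series. To tame it I would combine: (i) Proposition~\ref{p: Schwartz kernel with bd geometry} applied on $\widetilde M$, giving $\tilde k\in\Cinftyub(\widetilde\fG;\widetilde S)$ with uniform bounds on all derivatives; (ii) the representation~\eqref{psi(D_z)} together with the leafwise finite propagation speed~\eqref{leafwise unit propagation speed} and the Paley--Wiener decay of $\hat\psi$ for $\psi\in\AA$, yielding rapid (indeed, better than rapid) decay of $\tilde k(\tilde r_1,\tilde r_2)$ in $d_{\widetilde\FF}^{\text{hol}}(\tilde r_1,\tilde r_2)$, uniformly in the base point, with Fourier-side estimates of the type~\eqref{|(1+Delta_FF)^N psi(D_FF)|_m}; and (iii) proper discontinuity of the $\Gamma$-action on $\widetilde M$ together with bounded-geometry control on leafwise volume growth, to count the $T_\gamma\tilde\phi^{t_{\tilde p,\tilde q}-h(\gamma)}(\tilde p)$ lying within prescribed leafwise distance from $\tilde q$. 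Once (i)--(iii) are assembled, absolute convergence of the series and $\Cinftyub$-estimates uniform in $(p,q)\in M^2$ follow by packaging the Fourier-side integrals against the orbit-counting bound; Steps~1 and~2 above then merely record the identification of the resulting limit with the stated sum.
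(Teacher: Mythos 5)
Your Steps~1 and~2 --- the computation of $K_{\widetilde P}$ on the Fedida cover via the substitution $s=t_{\tilde p,\tilde q}$, and the descent to $M$ using the intertwining~\eqref{widetilde P pi_* = pi_* P} together with $t_{T_\gamma\tilde p,\tilde q}=t_{\tilde p,\tilde q}-h(\gamma)$ --- match the paper's proof. You diverge on the convergence of the $\Gamma$-sum. The paper first reduces to $\hat\psi\in\Cinftyc(\R)$, which is legitimate by density in $\AA$ and the continuity of $\psi\mapsto K_{P_\psi}$ from Propositions~\ref{p: |P|_m,m'} and~\ref{p: Schwartz kernel with bd geometry}; then the finite propagation speed bound~\eqref{supp hat psi subset[-R,R] => supp K_psi(D_z) subset ...} confines $K_{\widetilde P}$ to a bounded penumbra of the diagonal, so by proper discontinuity the $\Gamma$-sum is \emph{locally finite} and converges trivially, and the general case follows from continuity. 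You instead attack arbitrary $\psi\in\AA$ head-on by assembling Paley--Wiener decay of $\tilde k$ along leaves, uniform bounded-geometry bounds, and orbit counting. Your route is more laborious, but arguably more explicit: passing the formula from $\hat\psi\in\Cinftyc(\R)$ back to general $\psi$ tacitly requires that the limit of locally finite series agrees with the infinite series, which is a uniform tail estimate of exactly the kind you describe. One inaccuracy to fix: your point~(i) cites Proposition~\ref{p: Schwartz kernel with bd geometry}, which governs Schwartz kernels on $M^2$, whereas $\tilde k$ is a \emph{leafwise} kernel on $\widetilde\fG$; the uniform control you need comes from the argument behind Proposition~\ref{p: k in C^infty_p(fG;S)} (cf.\ \cite[Theorem~2.1]{Roe1987}), not from Proposition~\ref{p: Schwartz kernel with bd geometry} as such. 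You should also invoke explicitly the uniform discreteness of the $\Gamma$-orbits in $\widetilde M$ (a consequence of $\inf\inj_M>0$) on which the orbit count in~(iii) rests.
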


\begin{proof}
We can assume that $\hat\psi\in\Cinftyc(\R)$ by Propositions~\ref{p: |P|_m,m'} and~\ref{p: Schwartz kernel with bd geometry}, and because $\Cinftyc(\R)$ is dense in $\AA$. Then, by Proposition~\ref{p: k in C^infty_p(fG;S)}, for all $\tilde\alpha\in\Cinftyc(\widetilde M;\Lambda\widetilde\FF)$,
\begin{align*}
(\widetilde P\tilde\alpha)(\tilde p)&=\int_\R\big(\tilde\phi^{t*}\,\psi(D_{\widetilde\FF})\tilde\alpha\big)(\tilde p)\,f(t)\,dt\\
&=\int_\R\tilde\phi^{t*}\big(\psi(D_{\widetilde\FF})\tilde\alpha\big)\big(\tilde\phi^t(\tilde p)\big)\,f(t)\,dt\\
&=\int_{t\in\R}\int_{\tilde q\in D^{-1}(t)}\tilde\phi^{t*}\tilde k(\tilde\phi^t(\tilde p),\tilde q)\tilde\alpha(\tilde q)\,f(t)\,dt\\
&=\int_{\tilde q\in\widetilde M}\tilde\phi^{t_{\tilde p,\tilde q}*}
\tilde k(\tilde\phi^{t_{\tilde p,\tilde q}}(\tilde p),\tilde q)\tilde\alpha(\tilde q)\,
f(t_{\tilde p,\tilde q})\,\big|\widetilde\Lambda\big|(\tilde q)\;,
\end{align*}
because $D\tilde\phi^t(\tilde p)=D(\tilde q)$ if and only if $t=t_{\tilde p,\tilde q}$. Therefore
\[
K_{\widetilde P}(\tilde{p},\tilde{q})
=\tilde\phi^{t_{\tilde p,\tilde q}*}\tilde k\big(\tilde\phi^{t_{\tilde p,\tilde q}}(\tilde p),\tilde q\big)\,f(t_{\tilde p,\tilde q})\,
\big|\widetilde\Lambda\big|(\tilde q)\;.
\]
On the other hand, by~\eqref{pi_*} and the commutativity of~\eqref{widetilde P pi_* = pi_* P},
\[
K_P(p,q)
\equiv\sum_{\gamma\in\Gamma}
T_\gamma^*K_{\widetilde P}(\gamma\cdot\tilde p,\tilde q)\;.
\]
Locally, this series only has a finite number of nonzero terms. This is the series of the statement because $D(\tilde q)-D(\gamma\cdot\tilde p)=D(\tilde q)-D(\tilde p)-h(\gamma)$ by the $h$-equivariance of $D$.
\end{proof}

\section{Extension to the leafwise Novikov differential complex}\label{s: leafwise Novikov diff complex}

Consider the notation of Sections~\ref{ss: Novikov diff complex} and~\ref{ss: differential forms}, and assume that $\theta\in \Cinftyub(M;\Lambda^{0,1})\equiv \Cinftyub(M;\Lambda^1\FF)$. Then, like in~\eqref{d = d_0,1 + d_1,0 + d_2,-1} and~\eqref{delta = delta_0,-1 + delta_-1,0 + delta_-2,1}, we get the decompositions into bi-homogeneous components,
\[
d_z=d_{z,0,1}+d_{1,0}+d_{2,-1}\;,\quad\delta_z=\delta_{z,0,-1}+\delta_{-1,0}+\delta_{-2,1}\;,
\]
where $d_{z,0,1}=d_{0,1}+z\,{\theta\wedge}$ and $\delta_{z,0,-1}=\delta_{0,-1}-\bar z\,{\theta\!\lrcorner}$, which are of bounded geometry by Corollary~\ref{c: d_i,j, ... are of bd geom} and because $\theta\in \Cinftyub(M;\Lambda^{0,1})$. Since $\theta$ is closed, we get $d_{i,j}\theta=0$ for all $i,j$. So, by~\eqref{d_0,1^2=...=0} and~\eqref{delta_0,-1^2=...=0},
\begin{gather}
d_{z,0,1}^2=d_{z,0,1}d_{1,0}+d_{1,0}d_{z,0,1}=0\;,\label{d_z,0,1^2=...=0}\\
\delta_{z,0,-1}^2=\delta_{z,0,-1}\delta_{-1,0}+\delta_{-1,0}\delta_{z,0,-1}=0\;.\label{delta_z,0,-1^2=...=0}
\end{gather}
Let
\[
D_{0,z}=d_{z,0,1}+\delta_{z,0,-1}\;,\quad\Delta_{0,z}=D_{0,z}^2=d_{z,0,1}\delta_{z,0,-1}+d_{z,0,1}\delta_{z,0,-1}\;.
\]
On the other hand, we can also consider the leafwise version of the Novikov differential complex, $d_{\FF,z}=d_\FF+z\,{\theta\wedge}$ on $C^\infty(M;\Lambda\FF)$, or on $C^\infty(M;\Lambda\FF\otimes\Lambda N\FF)$, as well as its formal adjoint $\delta_{\FF,z}=\delta_\FF-\bar z\,{\theta\!\lrcorner}$. They satisfy the obvious versions of~\eqref{d_0,1 equiv d_FF} and Lemma~\ref{l: g bundle-like <=> delta_0,1 equiv delta_FF}, yielding obvious versions of~\eqref{d_0,1(f_IJ dx''^I wedge dx'^J)} and~\eqref{delta_0,-1(f_IJ dx''^I wedge dx'^J)}. Furthermore, for any choice of an orientation of $\FF$ on a distinguished open set $U$, we have
\[
\delta_{\FF,z}=(-1)^{n''v+n''+1}\star_\FF d_{\FF,-\bar z}\,\star_\FF
\]
on $C^\infty(U;\Lambda^v\FF)$ by~\eqref{delta_z = (-1)^nr+n+1 star d_-bar z star}. So, using also~\eqref{d_z,0,1^2=...=0} and~\eqref{delta_z,0,-1^2=...=0}, we get the following version of~\eqref{D_perp D_0 + D_0 D_perp}:
\[
D_\perp D_{0,z}+D_{0,z}D_\perp=KD_{0,z}+D_{0,z}K\;.
\]
This yields straightforward generalizations of all results and proofs of Section~\ref{s: Leafwise Hodge} for the {\em leafwise Novikov operators\/}, $d_{z,0,1}$, $D_{0,z}$, $\Delta_{0,z}$, $d_{\FF,z}$, $D_{\FF,z}$ and $\Delta_{\FF,z}$,. Let $\Pi_{0,z}$ and $\Pi_{\FF,z}$ denote the corresponding versions of $\Pi_0$ and $\Pi_\FF$. The term {\em leafwise Witten operators\/} should be used if $\theta$ is leafwise exact.

Let $\phi:(M,\FF)\to(M,\FF)$ be a smooth foliated map, let $\widetilde M$ be a regular covering of $M$ so that the lift $\tilde\theta$ of $\theta$ is exact, and let $\widetilde\FF$ be the lift of $\FF$ to $\widetilde M$. Like in the case of the Novikov differential complex (Section~\ref{ss: Novikov diff complex}), using~\eqref{phi^*}, any lift $\tilde\phi:(\widetilde M,\widetilde\FF)\to(\widetilde M,\widetilde\FF)$ of $\phi$ determines an endomorphism $\phi^{t*}_z$ of the leafwise Novikov differential complex $d_{\FF,z}$ on $C^\infty(M;\Lambda\FF)$, or on $C^\infty(M;\Lambda\FF\otimes\Lambda N\FF)$, which can be called a {\em leafwise Novikov perturbation\/} of $\phi^*$. With this definition, there is an obvious generalization of~\eqref{phi^*_0,0 equiv phi^*}, using the bi-homogeneous component $\phi^*_{z,0,0}$ of $\phi^*_z$ on $C^\infty(M;\Lambda)$. For every foliated flow $\phi=\{\phi^t\}$ on $(M,\FF)$, using its unique lift to a foliated flow $\tilde\phi=\{\tilde\phi^t\}$ on $(\widetilde M,\widetilde\FF)$, we get a unique determination of $\phi^{t*}_z$ on $C^\infty(M;\Lambda\FF)$, or on $C^\infty(M;\Lambda\FF\otimes\Lambda N\FF)$, called {\em the\/} Novikov perturbation of $\phi^{t*}$. Then, in Sections~\ref{s: smoothing operators} and~\ref{s: Schwartz kernels}, the definitions of $P$, $P_u$, $P_\infty$, $k$, $\tilde k$, $k_u$ and $\tilde k_u$ can be extended by using $\phi^{t*}_z$ and $D_{\FF,z}$ instead of $\phi^{t*}$ and $D_\FF$. The subindex ``$z$'' may be added to their notation if needed. Moreover the results, proofs and observations of Sections~\ref{s: smoothing operators} and~\ref{s: Schwartz kernels} have straightforward generalizations to this setting, using the indicated extensions of the tools.

\bibliographystyle{amsplain}



\providecommand{\bysame}{\leavevmode\hbox to3em{\hrulefill}\thinspace}
\providecommand{\MR}{\relax\ifhmode\unskip\space\fi MR }
\providecommand{\MRhref}[2]{%
  \href{http://www.ams.org/mathscinet-getitem?mr=#1}{#2}
}
\providecommand{\href}[2]{#2}

\end{document}